\documentclass[11pt,a4paper]{article}

\usepackage[utf8]{inputenc}
\usepackage[T1]{fontenc}
\usepackage[margin=1.2in]{geometry}
\usepackage{lmodern}

\usepackage{mathtools}
\usepackage{amsthm}
\setlength{\marginparwidth}{2cm}
\usepackage{todonotes}
\usepackage{mathrsfs,amssymb}  

\usepackage{enumitem}
\setlist[description]{font=\mdseries}
\setlist[enumerate,1]{label=\textup{\arabic*.},ref=\textup{\arabic*}}
\setlist[enumerate,2]{label=\textup{(\alph*)},ref=\textup{(\alph*)}}
\setlist[enumerate,3]{label=\textup{(\roman*)},ref=\textup{(\roman*)}}

\newtheorem{theorem}{Theorem}

\newtheorem{claim}[theorem]{Claim}

\newtheorem{proposition}[theorem]{Proposition}
\newtheorem{lemma}[theorem]{Lemma}
\newtheorem{corollary}[theorem]{Corollary}
\newtheorem{conjecture}[theorem]{Conjecture}

\theoremstyle{definition}
\newtheorem*{definition*}{Definition}

\newcommand*{\myproofname}{Proof}
\newenvironment{claimproof}[1][\myproofname]{\begin{proof}[#1]}{\end{proof}}

 \newcommand{\Exp}{\,\mathbb{E}}
 \renewcommand{\Pr}{\,\mathbb{P}}

\DeclareMathOperator{\s}{s}

\DeclareMathOperator{\Per}{Per}
\DeclareMathOperator{\PP}{PP}

\let\le\leqslant
\let\ge\geqslant
\let\leq\leqslant

\let\geq\geqslant

\usepackage{bookmark}

\newcommand*{\eps}{\varepsilon}
\renewcommand*{\phi}{\varphi}
\renewcommand*{\emptyset}{\varnothing}
\newcommand*{\indicator}[1]{\mathbf{1}\{#1\}}
\newcommand*{\EE}{\mathbb{E}}

\newcommand*{\rr}{r}

\def\P{\mathcal{P}}

\begin{document}

\title{Packing list-colourings}

\author{
	Stijn Cambie%
\thanks{Extremal Combinatorics and Probability Group (ECOPRO), Institute for Basic Science (IBS), Daejeon, South Korea. Partially supported by IBS-R029-C4 and a Vidi grant (639.032.614) of the Netherlands Organisation for Scientific Research (NWO). Email: \protect\href{mailto:stijn.cambie@hotmail.com}{\protect\nolinkurl{stijn.cambie@hotmail.com}}.}
	\and
	Wouter Cames van Batenburg%
	\thanks{Delft Institute of Applied Mathematics, Delft University of Technology, Netherlands.
		Email: \protect\href{mailto:w.p.s.camesvanbatenburg@tudelft.nl}{\protect\nolinkurl{w.p.s.camesvanbatenburg@tudelft.nl}}. Partially supported by ANR Project GATO (ANR-16-CE40-0009-01).}
	\and
	Ewan Davies%
	\thanks{Department of Computer Science, Colorado State University, USA.
		Email: \protect\href{mailto:research@ewandavies.org}{\protect\nolinkurl{research@ewandavies.org}}.}
	\and
	Ross J. Kang%
	\thanks{Korteweg--de Vries Institute for Mathematics, University of Amsterdam, Netherlands. Supported by a Vidi grant (639.032.614) of NWO.
		Email: \protect\href{mailto:r.kang@uva.nl}{\protect\nolinkurl{r.kang@uva.nl}}.}
	%\footnotemark
}

\date{\today}

\maketitle

\begin{abstract}
	List colouring is an influential and classic topic in graph theory. We initiate the study of a natural strengthening of this problem, where instead of one list-colouring, we seek many in parallel. Our explorations have uncovered a potentially rich seam of interesting problems spanning chromatic graph theory.

Given a $k$-list-assignment $L$ of a graph $G$, which is the assignment of a list $L(v)$ of $k$ colours to each vertex $v\in V(G)$, we study the existence of $k$ pairwise-disjoint proper colourings of $G$ using colours from these lists. We may refer to this as a \emph{list-packing}. Using a mix of combinatorial and probabilistic methods, we set out some basic upper bounds on the smallest $k$ for which such a list-packing is always guaranteed, in terms of the number of vertices, the degeneracy, the maximum degree, or the (list) chromatic number of $G$. (The reader might already find it interesting that such a minimal $k$ is well defined.) We also pursue a more focused study of the case when $G$ is a bipartite graph. Our results do not yet rule out the tantalising prospect that the minimal $k$ above is not too much larger than the list chromatic number.

	Our study has taken inspiration from study of the strong chromatic number, and we also explore generalisations of the problem above in the same spirit.

	%MSC: 05C15, 05C35, 05C69, 05C70
	%keywords: graph colouring, list colouring, graph packing, independent transversals, strong chromatic number 
\end{abstract}

\section{Introduction}

Some resource allocation problems may be framed in terms of graph colouring, for example if resources are colours to be allocated to vertices of a graph such that vertices which cannot make simultaneous use of any resource are connected by edges.
In such a situation, conceivably one might desire not just one colouring, but several in parallel. One might even want that collectively they cover all possible resource usage. What general conditions guarantee this? It is this strengthened graph colouring problem that we systematically study in this work.

A particularly natural way to frame this is with respect to \emph{list colouring}~\cite{Viz76,ERT80}. Given a graph $G$, a \emph{list-assignment} $L$ of $G$ is a mapping such that $L(v)$ is a subset of natural numbers (list of colours) associated to the vertex $v\in V(G)$.
By considering the colours as resources, one can interpret each list as reflecting the specific availability of resources at some site based on some external restrictions.
Given a positive integer $k$, a \emph{$k$-list-assignment} is a list-assignment for which each list has cardinality $k$.
A \emph{proper $L$-colouring} is a mapping $c: V(G)\to {\mathbb N}$ such that $c(v)\in L(v)$ for any $v\in V(G)$ and such that whenever $uv$ is an edge of $G$, $c(u)\ne c(v)$.
That is, $c$ encodes a proper colouring of the vertices of $G$ such that each vertex is coloured by a colour of its list.
The \emph{list chromatic number} (or \emph{choosability}) $\chi_\ell(G)$ is the least $k$ such that $G$ admits a proper $L$-colouring for \emph{any} $k$-list-assignment $L$ of $G$.
Note that $\chi_\ell(G)$ is always at least the chromatic number $\chi(G)$, as one of the list-assignments one must consider gives the same list to every vertex.

We formulate the main question above concretely within the framework of list colouring. Given a list-assignment $L$ of $G$, an \emph{$L$-packing of $G$ of size $k$} is a collection of $k$ mutually disjoint $L$-colourings $c_1,\dots,c_k$ of $G$, that is, $c_i(v)\ne c_j(v)$ for any $i\ne j$ and any $v\in V(G)$. We say that an $L$-packing is proper if each of the disjoint $L$-colourings is proper.
We define the \emph{list (chromatic) packing number} $\chi^\star_\ell(G)$ of $G$ as the least $k$ such that $G$ admits a proper $L$-packing of size $k$ for any $k$-list-assignment $L$ of $G$.
Note that $\chi^\star_\ell(G)$ is necessarily at least $\chi_\ell(G)$.
The latter implies monotonicity: for every $k>\chi^\star_\ell(G)$ and any $k$-list-assignment $L$ of $G$, iteratively one can find $k-\chi^\star_\ell(G)$ disjoint $L$-colourings and finally a proper $L$-packing of $G$ by adding $\chi^\star_\ell(G)$ disjoint $L$-colourings.

It might not be immediately obvious that the parameter $\chi^\star_\ell$ is always well-defined, but we provide a number of different proofs of this fact in the course of our work.
%Moreover, we remark that, independently, Alon~\cite{Alo92} and Fellows~\cite{Fel90} bounded the \emph{strong chromatic number}, a fact which implies that $\chi^\star_\ell(G)$ is always well-defined. 
It was Alon, Fellows and Hare~\cite{AFH96} who first suggested the study of $\chi^\star_\ell$ right at the end of their paper, but ours is the first work to embrace this suggestion.

Upon encountering the definition of list packing number, a natural course of action is to pursue list packing analogues of the most basic and important results on list colouring. This indeed is our programme, and we present several results in this vein. Our programme would be partly redundant if we were able to prove the following conjecture.
\begin{conjecture}\label{conj:main}
	There exists $C>0$ such that $\chi^\star_\ell(G) \le C \cdot\chi_\ell(G)$ for any graph $G$.
\end{conjecture}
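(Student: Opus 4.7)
Let $k := \chi_\ell(G)$ and fix a $Ck$-list-assignment $L$. A natural first move is partition-and-pack: sample a uniformly random partition of each $L(v)$ into $C$ disjoint blocks $B_1^v, \dots, B_C^v$ of size $k$, and treat each block as a $k$-list-assignment $L_j$. If one could show that every such $k$-list-assignment admits a proper $L_j$-packing of size $k$, concatenation yields an $L$-packing of size $Ck$, since the blocks partition $L(v)$ at every vertex. Thus the conjecture reduces, up to the precise constant, to the threshold statement $\chi^\star_\ell(G) \le \chi_\ell(G)$.

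For this threshold statement, the most direct probabilistic attempt is to sample independent uniform random bijections $\phi_v : L(v) \to \{1, \dots, k\}$ at each vertex and set $c_i(v) := \phi_v^{-1}(i)$. The packing condition is automatic, and each bad event $A_{uv, i} : c_i(u) = c_i(v)$ has probability at most $1/k$. The obstruction is the dependency structure: all bad events through a common vertex $v$ are coupled via the single bijection $\phi_v$, giving a Local Lemma dependency degree of $\Theta(\Delta k)$ and $epD = \Theta(\Delta)$, useless in general.

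The main obstacle is precisely this dependency. I would attempt to break it via, in increasing order of ambition: a nibble/semi-random approach building $O(1)$ colourings per round, using concentration to ensure almost all edges behave well and local repair for the rest; a lopsided Local Lemma exploiting that damaging conditionings on $\phi_v$ are much rarer than the unconditional $1/k$ rate; entropy-compression adapted to random permutations; or an algebraic attack via Alon's combinatorial Nullstellensatz, encoding the simultaneous existence of $k$ proper colourings as a single non-vanishing polynomial identity, in the spirit of Galvin's resolution of the threshold for list edge colouring of bipartite graphs---the threshold case on $K_n$ is already a list Latin square completion question, which models the genuine difficulty.

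Should the threshold statement prove false, a fallback would be to work at $C\chi_\ell(G)$ with $C>1$ via a greedy-plus-swap strategy: extract colourings iteratively, and whenever a vertex risks running out of legal choices, swap colours with previously-built colourings to restore flexibility. Identifying the smallest viable $C$ for which such a scheme succeeds would close the conjecture.
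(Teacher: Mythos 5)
The statement you are asked to prove is Conjecture~\ref{conj:main}, which is \emph{open}: the paper states it as a conjecture, offers no proof, and its strongest result in this direction is only that $\chi^\star_\ell(G)$ is bounded by an \emph{exponential} function of $\chi_\ell(G)$ (via Theorem~\ref{thm:degeneracy} combined with Alon's bound $\chi_\ell(G)\ge C\log\delta^\star(G)$). So no proposal could be checked against a proof in the paper; what can be checked is whether your sketch contains a viable route, and it does not.

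The concrete gap is in your opening reduction. You reduce the conjecture to the threshold statement $\chi^\star_\ell(G)\le\chi_\ell(G)$, but this statement is false, and the paper itself gives the counterexample: the $4$-cycle with lists $\{1,2\},\{1,2\},\{1,3\},\{2,3\}$ has $\chi_\ell(C_4)=2$ but admits no proper $L$-packing of size $2$, so $\chi^\star_\ell(C_4)=3>\chi_\ell(C_4)$. (This is also why the paper notes $C\ge 3/2$ in any eventual bound.) The partition-and-pack scheme cannot be repaired by using larger blocks either: asking each block of size $k'$ to support a packing of size $k'$ is exactly the definition of $k'\ge\chi^\star_\ell(G)$, so the argument is circular unless you already control $\chi^\star_\ell$ in terms of $\chi_\ell$, which is the whole problem. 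Your second observation --- that the uniform-random-bijection approach gives Lov\'asz-local-lemma parameters $epD=\Theta(\Delta)$ and is therefore useless --- is correct, but the remaining items (nibble, lopsided LLL, entropy compression, Nullstellensatz, greedy-plus-swap) are named strategies rather than arguments; none is carried out, and for none is it clear how to overcome the dependency obstruction you yourself identify. As written, the proposal establishes nothing beyond what the paper already discusses, and its one concrete claim (the reduction to the threshold) targets a refuted statement.
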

\noindent
It could even be the case, moreover, that, for each $\eps>0$ there is some $\chi_0$ such that $\chi^\star_\ell(G) \le (1+\eps) \cdot\chi_\ell(G)$ for all $G$ with $\chi_\ell(G)\ge \chi_0$.
A resolution of Conjecture~\ref{conj:main} or its asymptotically stronger variant, either affirmatively or negatively, would be very interesting.

So far we only know that $\chi^\star_\ell(G)$ is upper bounded by some exponential function of $\chi_\ell(G)$; see Theorem~\ref{thm:degeneracy} below.
Let us observe that $\chi^\star_\ell(G)$ can indeed be strictly larger than $\chi_\ell(G)$ for some $G$. This is the case when $G$ is an even cycle. For example, consider the cycle of length $4$ and lists $\{1,2\}$, $\{1,2\}$, $\{1,3\}$, $\{2,3\}$, listed in cyclic order.
Incidentally, this implies that the constant $C$ in Conjecture~\ref{conj:main}, if it exists, may not be smaller than $3/2$.

Our main contribution is to present a collection of bounds on $\chi^\star_\ell$ within a few basic graph colouring settings.
Please consult Subsection~\ref{sub:notation} for definitions of any unfamiliar or ambiguous graph theoretic terminology.
Due to the depth and breadth of chromatic graph theory, there remains a plethora of further possibilities. We summarise what we consider to be the most tempting ones at the end of the paper.

Perhaps the simplest upper bound on the chromatic number of a graph is the number of vertices, and this bound easily carries over to list chromatic number too.
In Section~\ref{sec:order}, we show using an unexpectedly delicate inductive argument that the analogous statement for list packing number also holds.

\begin{theorem}\label{thm:n}
	$\chi^\star_\ell(G) \le n$ for any graph $G$ on $n$ vertices. Equality holds if and only if $G$ is $K_n$, the complete graph on $n$ vertices.
\end{theorem}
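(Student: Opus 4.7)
The plan is to prove $\chi^\star_\ell(G)\leq n$ by induction on $n$, simultaneously with the extremal characterisation, relying on two main ingredients: the monotonicity $\chi^\star_\ell(G)\leq\chi^\star_\ell(H)$ whenever $G\subseteq H$ on the same vertex set (any proper $L$-packing of $H$ restricts to one of $G$), and Galvin's theorem $\chi'_\ell(K_{n,n})=n$ on list-edge-chromatic numbers of bipartite graphs.

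First I would establish $\chi^\star_\ell(K_n)=n$. The lower bound $\chi^\star_\ell(K_n)\geq\chi_\ell(K_n)=n$ is trivial, and for the upper bound I would observe that a proper $L$-packing of $K_n$ of size $n$ is precisely an $n\times n$ array whose row at vertex $v$ is a permutation of $L(v)$ and whose columns are injective (because $K_n$ is complete). This is the same data as a proper list-edge-colouring of $K_{n,n}$ in which the edge $(v,i)$ carries list $L(v)$, which exists by Galvin's theorem. The bound for a general $G$ on $n$ vertices then follows at once from $G\subseteq K_n$ and monotonicity, which also settles the ``if'' direction of the equality.

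For the converse, let $G\neq K_n$ have $n$ vertices, pick any non-edge $uv$, and observe $G\subseteq K_n-uv$; by monotonicity it suffices to show $\chi^\star_\ell(K_n-uv)\leq n-1$. Given an $(n-1)$-list-assignment $L$, the slack afforded by the missing edge is the option $c_i(u)=c_i(v)$ in some colourings; without this, the lists of size $n-1$ are too tight to support $n-1$ proper colourings across $V\setminus\{u,v\}$. My plan is to split the $n-1$ desired colourings into \emph{merged} ones (with $c_i(u)=c_i(v)\in L(u)\cap L(v)$, forbidding just one colour in each list on $V\setminus\{u,v\}$) and \emph{split} ones (with $c_i(u)\neq c_i(v)$, potentially forbidding two), chosen so that columns $u$ and $v$ become permutations of $L(u)$ and $L(v)$. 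Fixing the merge/split pattern and the pairs $(c_i(u),c_i(v))$ is a bipartite matching problem, and the residual row-by-row Latin-rectangle completion on the $K_{n-2}$ induced by $V\setminus\{u,v\}$ can then be carried out by a further Hall- or Galvin-style argument. I expect the main obstacle to be verifying Hall's condition for both stages together, since the choices at $u$ and $v$ propagate into the feasibility of the completion on $K_{n-2}$: when Hall's condition threatens to fail, one must reroute the merged-colour choices via alternating-chain swaps between rows and colours to restore admissibility. This careful combinatorial rebalancing is, I believe, the ``unexpected delicacy'' flagged in the introduction.
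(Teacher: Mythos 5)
Your first half is correct and takes a genuinely different route from the paper. You observe that a proper $L$-packing of $K_n$ of size $n$ is exactly an $n\times n$ array with row $v$ a permutation of $L(v)$ and injective columns, i.e.\ a proper list-edge-colouring of $K_{n,n}$ with edge $(v,i)$ carrying list $L(v)$, and invoke Galvin's theorem. This cleanly yields $\chi^\star_\ell(K_n)\le n$, hence the general upper bound by monotonicity. The paper instead proves a more flexible Hall-type lemma (Lemma~\ref{lem:Kn}): for any $1\le k\le n$, a $k$-list-assignment of $K_n$ in which every colour lies in at most $k$ lists admits a proper $L$-packing. Galvin buys a shorter proof of the $k=n$ case, but the paper's lemma is reused for smaller $k$ inside the proof of the extremal characterisation, which your Galvin shortcut does not supply.

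The genuine gap is in the converse, i.e.\ the claim $\chi^\star_\ell(K_n^-)\le n-1$, which is where essentially all of the difficulty of the theorem resides. Your reduction to $K_n^-$ is fine, but from there you only describe a strategy: partition the $n-1$ colourings into merged and split ones, solve a bipartite matching problem for the pairs $(c_i(u),c_i(v))$, and then complete row by row on $K_{n-2}$, ``rerouting via alternating-chain swaps'' if Hall's condition threatens to fail. You explicitly flag that you have not verified the Hall conditions for the two stages jointly, and that verification is the proof — the choices at $u$ and $v$ genuinely constrain which completions on $V\setminus\{u,v\}$ exist, and it is not clear that an arbitrary admissible matching at $\{u,v\}$ extends. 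The paper resolves exactly this tension by a different decomposition: it isolates the $i$ colours common to all $n$ lists, constructs an auxiliary $i$-list-assignment $L_1$ with $L_1(u)=L_1(v)=[i]$ and carefully controlled colour multiplicities (Claims~\ref{clm:K_n^-Part1} and~\ref{clm:K_n^-Part3}, involving a delicate iterative select/deselect/fix procedure over the ``rich'' colours), and then applies Lemma~\ref{lem:Kn} twice, to $L_1$ with $u,v$ identified and to $L\setminus L_1$. Until you actually carry out your two-stage Hall verification (or an equivalent rebalancing argument), the ``only if'' direction remains unproven.
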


Usually, one considers the bound $\chi_\ell(G)\le n$ as a corollary of a more refined statement about greedy colouring.
The degeneracy $\delta^\star(G)$ of a graph $G$ is the least $k$ such that there is an ordering of the vertices of $G$ such that each vertex has at most $k$ neighbours preceding it in the order.
A greedy algorithm that colours vertices in this order cannot stall if each list has size $1+\delta^\star(G)$, as at the time of colouring a vertex $v$ at most $\delta^\star(G)$ colours have been used previously on neighbours of $v$.
The degeneracy of a graph $G$ is at most the maximum degree $\Delta(G)$, and this is at most one less than the number of vertices, giving the well-known bounds $\chi_{\ell}(G) \le 1 + \delta^\star(G) \le 1 + \Delta(G) \le n$ for an $n$-vertex graph $G$.
In Sections~\ref{sec:degeneracy} and~\ref{sec:maxdegree}, using what are essentially greedy arguments, we show the following two upper bounds on list packing number in terms of the degeneracy and maximum degree, respectively.

\begin{theorem}\label{thm:degeneracy}
	$\chi^\star_\ell(G) \le 2\delta^\star(G)$ for any graph $G$.
\end{theorem}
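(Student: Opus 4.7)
The plan is to carry out a greedy argument along a degeneracy ordering, combined with a bipartite matching at each vertex. Set $d = \delta^\star(G)$ and $k = 2d$, and fix an arbitrary $k$-list-assignment $L$. I will reformulate the goal: a proper $L$-packing of size $k$ amounts to choosing, for every vertex $v$, a bijection $\pi_v : [k] \to L(v)$ (with the $i$-th colouring defined by $c_i(v) = \pi_v(i)$), subject to the constraint that $\pi_u(i) \ne \pi_v(i)$ for every edge $uv$ and every $i \in [k]$.

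Order the vertices $v_1, \dots, v_n$ so that each $v_j$ has at most $d$ earlier neighbours in $\{v_1, \dots, v_{j-1}\}$, and process them in this order, fixing $\pi_{v_j}$ only once all earlier bijections have been determined. At the step for $v = v_j$, let $u_1, \dots, u_s$ with $s \le d$ denote its already-processed neighbours. I then build an auxiliary bipartite graph $H$ with parts $[k]$ (the positions) and $L(v)$ (the colours), joining position $i$ to colour $c$ exactly when $c \ne \pi_{u_t}(i)$ for every $t$. A perfect matching in $H$ corresponds precisely to a bijection $\pi_v$ compatible with all previously chosen bijections, so it suffices to guarantee that $H$ always admits one.

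Each position forbids at most one colour per already-processed neighbour, and hence has degree $\ge k - s \ge d$ in $H$; symmetrically, each colour in $L(v)$ is forbidden at at most $s \le d$ positions, and so also has degree $\ge d$ in $H$. The main step is then a Hall's condition check: for $S \subseteq [k]$ with $|S| \le d$, choosing any $i \in S$ gives $|N_H(S)| \ge \deg_H(i) \ge d \ge |S|$; while for $|S| > d$, any colour $c \notin N_H(S)$ would have all of its $\ge d$ neighbours inside $[k] \setminus S$, a set of size $k - |S| < d$, a contradiction, so $N_H(S) = L(v)$ and $|N_H(S)| = k \ge |S|$. Hence $H$ has a perfect matching, the greedy step succeeds, and iterating over the ordering yields $\chi^\star_\ell(G) \le 2d$.

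The main obstacle is the Hall condition verification, which goes through cleanly precisely because the choice $k = 2d$ balances the list size against the at most $d$ back-neighbours available in the degeneracy ordering; everything else is essentially bookkeeping on the greedy framework, with no probabilistic input needed.
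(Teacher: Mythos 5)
Your proof is correct and follows essentially the same route as the paper's: a greedy pass along a degeneracy ordering, extending all $k=2d$ colourings at each vertex via Hall's theorem applied to the bipartite graph between colourings and colours, using the minimum degree $\ge d$ on both sides. The paper proves the stronger correspondence-cover version (Theorem~\ref{thm:degeneracycorr}) with the identical argument and leaves the Hall-condition verification to the reader, which you carry out explicitly and correctly.
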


\begin{theorem}\label{thm:maxdegree}
	$\chi^\star_\ell(G) \le 1+\Delta(G) + \chi_\ell(G)$ for any graph $G$.
\end{theorem}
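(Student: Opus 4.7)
Set $k = 1 + \Delta + \chi_\ell$ with $\Delta = \Delta(G)$ and $\chi_\ell = \chi_\ell(G)$, and let $L$ be an arbitrary $k$-list-assignment. I encode a packing by choosing, at each vertex $v$, a bijection $\pi_v \colon L(v) \to [k]$; the $t$-th colouring is then $c_t(v) := \pi_v^{-1}(t)$, and the $c_t$ are all proper exactly when $\pi_u(c) \ne \pi_v(c)$ for every edge $uv$ and every $c \in L(u) \cap L(v)$. The plan is to build the $\pi_v$'s in two stages: first use list-colourability to pin down one colouring, then complete the packing greedily vertex by vertex.

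For the seed stage, since $\chi_\ell \le k$, fix a proper $L$-colouring $c^*$ of $G$ and commit to $c_1 = c^*$ by requiring $\pi_v(c^*(v)) = 1$ for every $v$. Properness of $c^*$ makes this seed internally consistent: the two endpoints of an edge never request the same colour at index $1$. What remains is to extend each $\pi_v$ to a bijection $L(v) \setminus \{c^*(v)\} \to \{2, \ldots, k\}$ compatible with the remaining conflict constraints.

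For the greedy stage, process the vertices in some order $v_1, \ldots, v_n$ (for example, a degeneracy order). At $v_j$, with $\pi_{v_i}$ chosen for $i < j$, I must pick the remaining bijection so that $\pi_{v_j}(c) \ne \pi_u(c)$ for each earlier neighbour $u$ of $v_j$ and each $c \in (L(v_j) \cap L(u)) \setminus \{c^*(v_j), c^*(u)\}$. Existence of such a choice reduces to a perfect matching in the bipartite graph on parts $L(v_j) \setminus \{c^*(v_j)\}$ and $\{2, \ldots, k\}$, minus the forbidden pairs $(c, \pi_u(c))$. Each $\pi_u$ is a bijection, so the pairs contributed by a single neighbour form a partial matching; the forbidden set is therefore the union of at most $|N(v_j)| \le \Delta$ such matchings, and the allowed bipartite graph has minimum degree at least $(k-1) - \Delta = \chi_\ell$. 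When $\chi_\ell \ge \Delta$ this is already $\ge (k-1)/2$, and a standard minimum-degree Hall-type lemma delivers the matching.

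The hard case, and the main obstacle, is the regime $\chi_\ell < \Delta$, where the minimum-degree bound alone is too weak for Hall. Here I would preclude every hypothetical Hall violator $(S, T)$ with $|S| + |T| \ge k$ and $S \times T$ entirely forbidden. The matching decomposition of the forbidden set already forces $\max(|S|, |T|) \le \Delta$, and hence $\min(|S|, |T|) \ge 1 + \chi_\ell$. The plan is to close the remaining gap by exploiting the seed: after restricting to the $(k-1) \times (k-1)$ board, each neighbour $u$'s contributed matching is missing the pair $(c^*(u), 1)$, which removes one forbidden cell every time $c^*(u) \in S$; and the set $\{c^*(u) : u \sim v_j\}$ has at most $\Delta$ elements inside $L(v_j) \setminus \{c^*(v_j)\}$. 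A careful double-count of how many cells of $S \times T$ can belong to the forbidden set, incorporating these `saved' pairs, should yield the contradiction that secures Hall's condition. Pushing this accounting through cleanly, so that the $\chi_\ell$ slack in the bound is fully exploited, is the heart of the argument.
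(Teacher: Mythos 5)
Your reduction of the greedy step to a perfect-matching problem, and your identification of the regime $\chi_\ell(G)<\Delta(G)$ as the crux, are both accurate --- but the gap you flag there is real and cannot be closed by the accounting you sketch, because the Hall condition can genuinely fail. Concretely, suppose $v_j$ has $\Delta$ earlier, pairwise non-adjacent neighbours $u_1,\dots,u_\Delta$, all with list $[k]$, with $c^*(v_j)=k$ and $c^*(u_i)=1$ for every $i$. Pick any $S\subseteq\{2,\dots,k-1\}$ with $|S|=\Delta$ and any $T\subseteq\{2,\dots,k\}$ with $|T|=\chi_\ell+1$, and let $\pi_{u_i}$ send a cyclically shifted $(\chi_\ell+1)$-subset of $S$ bijectively onto $T$ (extended arbitrarily to a bijection of $[k]$ fixing $1$). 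Then every cell of $S\times T$ is forbidden, $|S|+|T|=\Delta+\chi_\ell+1=k$ exceeds the side length $k-1$ of the board, and Hall fails. Your ``saved'' pairs $(c^*(u_i),1)$ contribute nothing here, since $c^*(u_i)=1\notin S$. Each $\pi_{u_i}$ is a legal outcome of your own first $j-1$ greedy steps (the $u_i$ impose no constraints on one another), so the one-pass greedy seeded by a single colouring is not always completable, and no local double-count will rescue it: the obstruction is an actual Hall violator, not slack in the estimate. Avoiding it would require coordinating the choices at the $u_i$ globally, which your proposal does not do.

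The paper's proof takes a different, global route designed precisely for this regime: it considers a \emph{maximum} partial proper $k$-colouring $\alpha$ of the list-cover $H$ (with distinct colours inside each part $L(v)$) and augments if some vertex $x\in L(v_1)$ is uncoloured. A colour (``red'') is then missing from $L(v_1)$; for each $v_i$ one forms the set $L'(v_i)$ of vertices of $L(v_i)$ whose colour could be handed to the red vertex $r_i$ of $L(v_i)$ without conflict, which has size at least $k-\Delta=\chi_\ell+1$, hence at least $\chi_\ell$ after deleting the neighbours of $x$. The definition of $\chi_\ell$ yields a proper colouring (independent transversal) of these shrunken lists that can be taken to contain $x$; recolouring that transversal red and passing its old colours to the displaced red vertices colours one more vertex of $H$, contradicting maximality. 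Note where $\chi_\ell(G)$ enters: as the list-size threshold guaranteeing a transversal of the reduced lists. Your argument uses $\chi_\ell$ only to produce the seed $c^*$ and as additive degree slack, which is a strictly weaker use and is why the hard case resists your approach.
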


\noindent
Although neither implies Theorem~\ref{thm:n}, these two results have a number of interesting consequences. Both imply that $\chi^\star_\ell(G)$ is always at most around $2\Delta(G)$, a bound which is sharp up to the factor $2$.  They imply that for Conjecture~\ref{conj:main}, one may restrict attention to graphs having list chromatic number much smaller than the degeneracy or maximum degree. Together with the result proved by Alon~\cite{Alo93} that for some constant $C>0$ $\chi_\ell(G) \ge C\log \delta^\star(G)$ for all $G$, note that Theorem~\ref{thm:degeneracy} implies that $\chi^\star_\ell(G)$ is bounded by an exponential function of $\chi_\ell(G)$, which serves as modest support for Conjecture~\ref{conj:main}. Theorem~\ref{thm:degeneracy} together with Euler's formula further implies that $\chi^\star_\ell(G)\le 10$ for any planar graph $G$. It is worth noting that the bound in Theorem~\ref{thm:degeneracy} cannot be improved to $1+\delta^\star(G)$, see Theorem~\ref{thm:degeneracyexample}.

Another important way to bound the list chromatic number is to estimate it in terms of the chromatic number. A basic but well-known result of Alon~\cite{Alo92} asserts that $\chi_\ell(G)$ is within a factor $\log n$ of the chromatic number $\chi(G)$, where $n$ denotes the order of the graph $G$. We show the following two list packing versions of this in Section~\ref{sec:approx}. Here $\chi_f(G)$ denotes the fractional chromatic number of $G$, while $\rho(G)$ denotes the Hall ratio of $G$.

\begin{theorem}\label{thm:fractional}
	$\chi^\star_\ell(G) \le  \frac{(5+o(1)) \log n}{\log(\chi_f(G)/(\chi_f(G)-1))}\le (5+o(1))\chi_f(G)\log n$ for any graph $G$ with at least one edge and with $n$ vertices, as $n\to\infty$.
	If $\chi_f(G)$ is bounded as $n\to\infty$ then the leading constant can be improved to $1$.
\end{theorem}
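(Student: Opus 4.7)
The plan is to adapt a classical probabilistic argument of Alon (which gives $\chi_\ell(G) \le (1+o(1))\log n / \log(\chi_f(G)/(\chi_f(G)-1))$ by sampling, for each colour in the universe, one random independent set) by promoting his single-layer sample to a two-dimensional array indexed by (colour, column) and extracting the $k$ disjoint colourings via a vertex-local bipartite matching.

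Let $\chi_f = \chi_f(G)$, noting $\chi_f \ge 2$ since $G$ has an edge, and realise the fractional chromatic number as a probability distribution $\mu$ on the independent sets of $G$ with $\Pr_{I\sim\mu}[v\in I] \ge 1/\chi_f$ for every $v$. Let $k$ be the target upper bound and fix any $k$-list-assignment $L$. Independently, for each pair $(c,i)$ with $c\in\bigcup_{v}L(v)$ and $i\in\{1,\dots,k\}$, sample $I_{c,i}\sim\mu$. For each vertex $v$, form the random bipartite graph $B_v$ on the parts $L(v)$ and $\{1,\dots,k\}$ (both of size $k$) in which $(c,i)$ is an edge exactly when $v\in I_{c,i}$. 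Marginally, $B_v$ is distributed as the Erd\H{o}s--R\'enyi bipartite random graph with parts of size $k$ and edge probability $p\ge 1/\chi_f$.

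The whole approach reduces to showing that, with positive probability, every $B_v$ contains a perfect matching. Granting this and selecting matchings $M_v$, I set $c_i(v)$ to be the unique colour $c\in L(v)$ with $(c,i)\in M_v$. The resulting $c_1,\dots,c_k$ form a proper $L$-packing of $G$: in any column $i$, if adjacent vertices $u,v$ both received colour $c$ then $u,v\in I_{c,i}$, contradicting the independence of $I_{c,i}$; and each $M_v$ being a perfect matching on $L(v)$ versus $\{1,\dots,k\}$ forces $c_1(v),\dots,c_k(v)$ to enumerate $L(v)$, so the $k$ colourings are pairwise disjoint at $v$.

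The main obstacle is the perfect matching threshold. By Hall's theorem,
\[
\Pr[B_v\ \text{has no perfect matching}] \le \sum_{s=1}^{k}\binom{k}{s}\binom{k}{s-1}(1-1/\chi_f)^{s(k-s+1)},
\]
which I would then union-bound over the $n$ vertices. When $\chi_f$ stays bounded as $n\to\infty$ the sum is dominated by its two endpoint terms $s\in\{1,k\}$ (each equal to $k(1-1/\chi_f)^k$ and corresponding to an isolated vertex on one side), so already $k=(1+o(1))\log n/\log(\chi_f/(\chi_f-1))$ suffices, which is what yields the improved leading constant. When $\chi_f$ may grow with $n$, the intermediate binomials $\binom{k}{s}\binom{k}{s-1}$ are no longer negligible compared to the exponential decay $(1-1/\chi_f)^{s(k-s+1)}$, and a careful balancing inflates the required threshold to $k=(5+o(1))\log n/\log(\chi_f/(\chi_f-1))$. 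The second inequality in the statement is then the elementary bound $\log(\chi_f/(\chi_f-1))\ge 1/\chi_f$ applied to the first.
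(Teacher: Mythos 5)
Your proposal is correct and is essentially the paper's own argument: the paper likewise samples one random independent set (a uniformly random colour class of an $(a,b)$-colouring realising $\chi_f$) for each (colour, index) pair, records the admissible choices at each vertex in a $k\times k$ binary matrix, extracts the packing from a $1$-transversal of each matrix (your perfect matching in $B_v$), and bounds the failure probability via the Frobenius--K\"onig criterion by exactly your Hall-type sum, followed by a union bound over vertices. The one step you assert without computing --- that the intermediate terms of that sum are controlled once $k=(5+o(1))\log n/\log(\chi_f/(\chi_f-1))$ --- is precisely the content of Lemma~\ref{lem:comp_estimate} and Theorem~\ref{thr:Permenant_BernoulliDistributedMatrix}, and your claim is correct (the paper even remarks that a purely one-sided union bound of the kind you propose would permit a constant smaller than $5$).
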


\begin{theorem}\label{thm:hallratio}
	$\chi^\star_\ell(G) \le (5+o(1))\rho(G) (\log n)^2$ for any graph $G$ on $n$ vertices, as $n\to\infty$.
\end{theorem}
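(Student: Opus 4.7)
My plan is to deduce Theorem~\ref{thm:hallratio} from Theorem~\ref{thm:fractional} by way of the classical bound $\chi_f(G) \le \rho(G)\log n + 1$, which is obtained by iteratively extracting large independent sets.

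First I would establish that chromatic bound. The definition of Hall ratio implies that every non-empty induced subgraph $G[V']$ of $G$ satisfies $\alpha(G[V'])\ge |V'|/\rho(G)$. Setting $V_0:=V(G)$ and iteratively removing an independent set $I_t\subseteq V_{t-1}$ of size at least $|V_{t-1}|/\rho(G)$, one has $|V_t|\le n\bigl(1-1/\rho(G)\bigr)^t$. Hence the process exhausts $V(G)$ after at most $\lceil \log n/\log(\rho(G)/(\rho(G)-1))\rceil$ rounds, which yields a proper colouring of $G$ using at most that many colour classes. Using $\log(1+y)\ge y/(1+y)$ with $y=1/(\rho(G)-1)$ gives $\log(\rho(G)/(\rho(G)-1))\ge 1/\rho(G)$, whence
\[
\chi_f(G)\le \chi(G)\le \rho(G)\log n+1.
\]

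Second, I would plug this into Theorem~\ref{thm:fractional}. The same inequality $\log(x/(x-1))\ge 1/x$ implies that $\chi^\star_\ell(G)\le (5+o(1))\chi_f(G)\log n$, and substituting the bound above yields
\[
\chi^\star_\ell(G)\le (5+o(1))\bigl(\rho(G)\log n+1\bigr)\log n=(5+o(1))\rho(G)(\log n)^2,
\]
as required.

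There is no real obstacle here: both ingredients are either standard or already established. The one minor delicacy is making sure that the $(5+o(1))$ constant survives the substitution, which it does because the lower-order $+1$ in the bound on $\chi_f(G)$ only contributes an $O(\log n)$ term. The extra factor of $\log n$ compared with Theorem~\ref{thm:fractional} reflects exactly the maximum known gap between $\rho(G)$ and $\chi_f(G)$.
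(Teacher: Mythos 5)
Your proposal is correct and is essentially the paper's own argument: the paper likewise extracts independent sets of size at least a $1/\rho(G)$ fraction of what remains to get $\chi(G)\le \rho(G)\log n$ (up to rounding), and then applies Theorem~\ref{thm:fractional} via $\chi_f(G)\le\chi(G)$. Your bookkeeping of the ceiling and the lower-order $+1$ is slightly more careful than the paper's, but the route is the same.
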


\noindent
Our proofs of these results rely on good estimates for the probability of zero permanent in a random binary matrix (Theorem~\ref{thr:Permenant_BernoulliDistributedMatrix}).
In the case that $\chi_f(G)$ is bounded, Theorem~\ref{thm:fractional} is asymptotically sharp for the complete multipartite graphs (see~\cite{ERT80}).
And since $\chi_f(G)\le \chi(G)$ and since $1/\log(x/(x-1)) \le x$, Theorem~\ref{thm:fractional} is a stronger form of the last-mentioned result of Alon.
Since $\chi_\ell(G)\le\chi^\star_\ell(G)$, Theorem~\ref{thm:fractional} strengthens a recent result in~\cite{CJKP20}, while Theorem~\ref{thm:hallratio} partially strengthens another recent result in~\cite{NoPo23}. It could well be that the extra factor $\log n$ in Theorem~\ref{thm:hallratio} is unnecessary, in which case, since $\rho(G)\le\chi_f(G)$ and since $1/\log(x/(x-1)) \sim x$ as $x\to\infty$, we would then essentially have a common strengthening of Theorems~\ref{thm:fractional} and~\ref{thm:hallratio}.

Note that in the special case of a bipartite graph $G$ on $n$ vertices, Theorem~\ref{thm:fractional} (or see also Theorem~\ref{th:bipartiteLogN} below) implies that $\chi_\ell(G) \le (1+o(1))\log_2 n$ as $n\to\infty$. Alon and Krivelevich~\cite{AlKr98} conjectured something more refined in terms of maximum degree, in particular, that for some $C>0$ we have $\chi_\ell(G) \le C\log \Delta(G)$ for any bipartite $G$. Since its formulation there has been surprisingly little progress on this essential problem. Already then, it was known that for some $C>0$ $\chi_\ell(G) \le C\Delta/\log \Delta(G)$ for any bipartite $G$, a statement which is a corollary of the seminal result of Johansson for triangle-free graphs~\cite{Joh96+}. Recent related efforts have only affected the asymptotic leading constant $C$, bringing it down to $1$; see~\cite{Mol19,ACK21,CaKa22}. Our work matches these recent efforts, but for qualitatively a much stronger structural parameter.

\begin{theorem}\label{thm:bipartite}
	$\chi^\star_\ell(G) \le (1+o(1)) \Delta/\log \Delta$ for any bipartite graph $G$ with $\Delta(G)\le\Delta$, as $\Delta\to\infty$.
\end{theorem}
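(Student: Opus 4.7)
A proper $L$-packing of $G$ of size $k$ is exactly a proper $L'$-colouring of the Cartesian product $G \square K_k$ under the fibre-constant list assignment $L'((v,i)) := L(v)$: properness on the cliques $\{v\}\square K_k$ forces the $k$ colourings to be disjoint at every vertex (so that they collectively form a bijection of $[k]$ onto $L(v)$), while properness on the edges $(u,i)(v,i)$ is precisely properness of the $i$-th colouring. The plan is to show that this list-colouring instance is solvable whenever $k = (1+o(1))\Delta/\log\Delta$, by adapting the semi-random (Johansson-type) argument behind the bipartite list-colouring bound $\chi_\ell(G) \le (1+o(1))\Delta/\log\Delta$ from~\cite{Mol19,ACK20+,CaKa20+}.

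A one-shot random strategy --- pick an independent uniform bijection $\sigma_v : [k] \to L(v)$ for each $v \in A$ and then try to complete on $B$ via Hall's theorem --- already fails when $k \sim \Delta/\log\Delta$, since $k \cdot (1-1/k)^{\Delta} \sim 1/\log \Delta$, so the expected number of colours still available at a fixed index $i$ for a vertex of $B$ is below $1$. Instead, I would commit the bijections in small nibbles, alternating between the two sides. In each of $T = \Theta(\log\Delta)$ rounds, every currently-unfinished vertex $v$ on the active side proposes a random partial bijection from a small random $p$-fraction of still-free indices into its residual list (coupled through a single uniform random permutation of $L(v)$ so that the partial commitments are always a partial injection). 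Proposals that conflict with already-committed colours across the bipartition are discarded; the rest are kept, and the residual lists and residual index sets at vertices on the opposite side are updated for the next round.

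As in the Johansson analysis, Talagrand's inequality would be used to show that the residual list sizes and residual index-set sizes at each surviving vertex remain tightly concentrated around their (shrinking but still favourable) expectations round-by-round, so that after $T$ rounds the ratio of residual list size to residual maximum degree is polylogarithmic in $\Delta$. The residual instance is then closed by the Lovász Local Lemma, or, equivalently in this regime, by Haxell's theorem applied to the sparsified residual bipartite strong-colouring problem.

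The main obstacle over the single-colouring bipartite proof is that one must preserve, at every round, a Hall-type extensibility condition at every fibre $\{v\}\times[k]$: the partial bijection committed so far must always be completable to a full bijection $[k]\to L(v)$. Locally at each vertex this reduces to bounding the probability that a random bipartite graph between $[k]$ and the residual list of $v$ fails to admit a perfect matching --- precisely the setting of Theorem~\ref{thr:Permenant_BernoulliDistributedMatrix}. Combining this local matching estimate with the global concentration and dependency control (via martingale arguments and the Lovász Local Lemma) is where I expect the technical core of the proof to lie.
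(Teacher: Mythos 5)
There is a genuine gap, and it starts with the calculation you use to reject the one-shot strategy. With $k=(1+\eps)\Delta/\log\Delta$ for a \emph{fixed} $\eps>0$ (which is what the $(1+o(1))$ in the statement permits), the survival probability of a colour at a fixed index is $(1-1/k)^{\Delta}\approx e^{-\Delta/k}=\Delta^{-1/(1+\eps)}$, so the expected number of colours of $L(b)$ still available at index $i$ is $k\Delta^{-1/(1+\eps)}=(1+\eps)\Delta^{\eps/(1+\eps)}/\log\Delta\to\infty$; your estimate $k(1-1/k)^{\Delta}\sim 1/\log\Delta$ corresponds to taking $\eps=0$. The paper's proof of Theorem~\ref{thm:bipartite} (via Theorems~\ref{thm:bipartitecorr} and~\ref{lem:bipartite}) is precisely the one-shot strategy you dismiss: choose a uniformly random ordering of $L(b)$ independently for each $b$ in one part, let $\mathcal{B}(a)$ be the event that the resulting $k\times k$ blocking matrix at $a$ has no $0$-transversal, bound $\Pr(\mathcal{B}(a))$ by Lemma~\ref{cor:low_probability_zerotransversal}, and finish with the Lov\'asz local lemma (dependency degree $\Delta_A(\Delta_B-1)$). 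No nibble is needed. Having discarded this route on false grounds, you substitute a multi-round semi-random scheme whose hard steps --- maintaining Hall-extensibility of the partial bijection at every fibre across rounds, and the concentration analysis for the packing version --- you explicitly defer; as written this is a plan, not a proof.

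A second concrete problem is your appeal to Theorem~\ref{thr:Permenant_BernoulliDistributedMatrix} for the local matching estimate. That theorem requires independent Bernoulli entries, but the blocking matrix arising from random bijections on the neighbourhood of a vertex is a function of a sum of random permutation matrices, and its entries are \emph{not} independent nor even negatively correlated in the way needed for a straightforward union bound over Frobenius--K\"onig obstructions (the paper records explicit counterexamples, e.g.\ $(k,n)=(4,32)$). This is exactly why the paper proves the separate Lemma~\ref{cor:low_probability_zerotransversal}, whose proof (via Lemma~\ref{lem:low_probability_nonzero_l*msubmatrix_sumpartialpermutations_l}) involves delicate conditional Chernoff estimates to circumvent the failure of negative correlation. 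Your reformulation of an $L$-packing as a list colouring of $G\,\square\,K_k$ is correct and the coupon-collector framing is the right intuition, but to repair the argument you should drop the nibble, run the one-shot scheme with the $(1+\eps)$ slack, and confront the correlation issue head-on as the paper does.
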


\noindent
Combining Conjecture~\ref{conj:main} and the Alon--Krivelevich conjecture mentioned above, one could already aspire to a much better bound. (Or to put it in another way, the bipartite case might be where it is most natural to seek counterexamples to Conjecture~\ref{conj:main}.)
On the other hand, meeting the `Shearer barrier'~\cite{She83} here is difficult enough.
Our proof of Theorem~\ref{thm:bipartite} is rather involved. We draw on the `coupon collector intuition', which we know can be neatly combined with the Lov\'asz local lemma to obtain the analogous list chromatic number bound (see~\cite{ACK21,CaKa22}). Unfortunately, this approach does not immediately apply here for Theorem~\ref{thm:bipartite} as it is impossible to extend the requisite negative correlation property for packing; however, we managed to circumvent this obstacle for a suitable result on transversals in random matrices (Lemma~\ref{cor:low_probability_zerotransversal}) by using significant further probability estimates.
It seems challenging to establish the bound of Theorem~\ref{thm:bipartite} for all triangle-free $G$, even at the expense of the asymptotic leading constant. Already a bound strictly smaller than $\Delta$ would be welcome progress.

Although we have presented our results so far purely in terms of the list packing number, many of them can be shown in some natural stronger or more general settings. If one interprets list-colourings as independent transversals of some suitably vertex-partitioned auxiliary graph, then one can similarly reinterpret list-packings. By considering generalisations of that auxiliary graph, we obtain a natural range of parameters/settings that expands from list packing towards the classic notion of strong colourings~\cite{Alo88,Fel90}.
We introduce and discuss these in Subsection~\ref{sub:moregeneral}. Thereafter we `resume' the introduction to this paper, by also stating and discussing the strengthened or more general versions of our results. Before that, we set out some of our more basic notation, for clarity and completeness.

\subsection{Basic terminology}\label{sub:notation}
For the convenience of the reader, here we indicate some of the standard notational conventions/definitions we assume throughout this work.
Let $G$ be a graph. The \emph{order} of $G$ is $|G|=|V(G)|$.
For $v\in V(G)$, the \emph{neighbourhood} of $v$ is $N(v) = \{u\in V(G):uv\in E\}$ and the \emph{degree} of $v$ is $\deg(v)=|N(v)|$.

The \emph{minimum degree} of $G$ is $\delta(G)=\min\{\deg(v):v\in V(G)\}$ and the \emph{maximum degree} of $G$ is $\Delta(G)=\max\{\deg(v):v\in V(G)\}$. The \emph{degeneracy} of $G$ is $\delta^\star(G)=\max_{H\subseteq G}\delta(H)$, where the maximum is taken over all subgraphs $H$ of $G$, and this is equivalent to the definition given above in terms of neighbours preceding vertices in some order.
A \emph{clique} in a graph $G$ is a set of vertices $X\subseteq V(G)$ such that every pair in $X$ forms an edge of $G$, and an \emph{independent set} in $G$ is a set $Y$ such that no pair in $Y$ forms an edge of $G$.
The \emph{clique number} $\omega(G)$ of $G$ is size of a largest clique in $G$ and the \emph{independence number} $\alpha(G)$ of $G$ is size of a largest independent set in $G$.

The \emph{Hall ratio} of $G$ is $\rho(G) = \max_{H\subseteq G}|H|/\alpha(H)$, where the maximum is taken over subgraphs $H$ of $G$ containing at least one vertex.
The \emph{fractional chromatic number $\chi_f(G)$} of $G$ is the least $k$ such that there exists a probability distribution over the independent sets of $G$ such that $\Pr(v\in \mathbf{S}) \ge 1/k$ for each $v\in V(G)$ and $\mathbf{S}$ a random independent set drawn according to the distribution.
The \emph{chromatic number $\chi(G)$} of $G$ is the least $k$ such that there is a partition of $V(G)$ into $k$ independent sets.
We will usually use $n$ to denote the number of vertices of a graph $G$.

It is worth keeping in mind that the following simple inequalities always hold:
\begin{align*}
	\omega(G)\le\rho(G)\le\chi_f(G)\le\chi(G)\le\chi_\ell(G)\le1+\delta^\star(G)\le1+\Delta(G) \le n.
\end{align*}

%For a subset $I \subseteq [k]$, a $k \times k$ matrix will be called a partial permutation matrix (with rowset $I$) if it is equal to a $k \times k$ permutation matrix for which the entries in rows with index outside $I$ are replaced by zeros. 
Given a matrix $A$, we denote its entries with $A_{i,j}$ and with $A_{I \times J}$ we denote the submatrix formed from rows $I$ and columns $J$.
A \emph{binary matrix} is a matrix with entries in $\{0,1\}$. A \emph{zero matrix} is a matrix with every entry equal to $0$.
A binary $k \times k$ matrix is said to have a \emph{$1$-transversal} (resp.\ \emph{$0$-transversal}) if it contains $k$ elements  that are each of value $1$ (resp.\ each of value $0$), such that no two elements are in the same column and no two are in the same row.
The \emph{permanent} of a $k\times k$ matrix $A$ is defined as
$\Per (A)=\sum_{\sigma}\prod_{i=1}^{n}A_{i,\sigma(i)},$ where the sum ranges over all permutations $\sigma$ of $[k]=\left\{1,\ldots,k\right\}$.
For our applications, it is important to note that a binary matrix $A$ has no $1$-transversal if and only if $\Per(A)=0$.

\subsection{From list packing to strong colouring}\label{sub:moregeneral}

As mentioned, although we have chosen to introduce our results in terms of the parameter $\chi^\star_\ell$, many of our results can be established (and otherwise pursued) in stronger or more general settings. We define and discuss these now.

Let $G$ and $H$ be graphs.
We say that $H$ is a \emph{cover} of $G$ with respect to a mapping $L: V(G) \to 2^{V(H)}$ if
$L$ induces a partition of $V(H)$ and the subgraph induced between $L(v)$ and $L(v')$ is empty whenever $vv'\notin E(G)$.
Under various basic conditions on $G$, $H$ and $L$, we are interested in the existence of an \emph{independent transversal} of $H$ with respect to the partition, which is a vertex subset with exactly one vertex chosen from each part that simultaneously forms an independent set.
Moreover, we say that a cover $H$ of $G$ with respect to $L$ is \emph{$k$-fold} if $|L(v)|=k$ for all $v\in V(G)$. In this case, we will also be interested in the existence of some \emph{($k$-fold) IT packing} of $H$ with respect to $L$, i.e.~a collection of $k$ mutually disjoint independent transversals of $H$ with respect to $L$.
These settings are motivated in a light-hearted way in a well-known survey paper of Haxell~\cite{Hax11}.

To illustrate, we first reformulate list colouring and list packing in this terminology.
From a list-assignment $L$ of $G$, we derive the \emph{list-cover} $H_\ell(G,L)$ for $G$ via $L$ as follows.
For every $v\in V(G)$, we let $L_\ell(v)=\{(v,c)\}_{c\in L(v)}$ and define $V(H_\ell)=\cup_{v\in V(G)}L_\ell(v)$. We define $E(H_\ell)$ by including $(v,c)(v',c')\in E(H_\ell)$ if and only if $vv'\in E(G)$ and $c=c'\in L(v)\cap L(v')$.
Note that $H_\ell$ is a cover of $G$ with respect to $L_\ell$, and that $H_\ell$ is $k$-fold if $L$ is a $k$-list-assignment of $G$.
The independent transversals of $H_\ell$ with respect to $L_\ell$ are in one-to-one correspondence with the proper $L$-colourings of $G$.
If $L$ is a $k$-list-assignment of $G$, then the $k$-fold IT packings of $H_\ell$ with respect to $L_\ell$ are in one-to-one correspondence with the proper $L$-packings of $G$ of size $k$.
Thus $\chi_\ell(G)$ and $\chi^\star_\ell(G)$ can equivalently be defined in terms of list-covers.

We can somewhat relax the setting as follows.
A \emph{correspondence-assignment} for $G$ via $L$ is a cover $H$ for $G$ via $L$ such that for each edge $vv'\in E(G)$, the subgraph induced between $L(v)$ and $L(v')$ is a matching. We call such an $H$ a \emph{correspondence-cover} for $G$ with respect to $L$.
A \emph{correspondence $L$-colouring} of $H$ is an independent transversal of $H$ with respect to $L$.
The \emph{correspondence chromatic number} (or \emph{DP-chromatic number}) $\chi_c(G)$ is the least $k$ such that any $k$-fold correspondence-cover $H$ of $G$ via $L$ admits a correspondence $L$-colouring~\cite{DvPo18}.
Given a $k$-fold correspondence-cover, a \emph{($k$-fold) correspondence $L$-packing of $H$} is a $k$-fold IT packing of $H$ with respect to $L$.
We define the \emph{correspondence (chromatic) packing number} $\chi^\star_c(G)$ of $G$ as the least $k$ such that any $k$-fold correspondence-cover $H$ of $G$ via $L$ admits a $k$-fold correspondence $L$-packing.
Every list-cover is a correspondence-cover, and moreover, for any list-cover $H$ of $G$ via $L$, the correspondence $L$-colourings of $H$ are in one-to-one correspondence with the proper $L$-colourings of $G$. In this sense correspondence colouring (or packing) is a relaxation of list colouring (or packing, respectively).
Note that $\chi_c(G)$ is necessarily at least $\chi_\ell(G)$,  $\chi^\star_c(G)$ is necessarily at least $\chi^\star_\ell(G)$, and $\chi^\star_c(G)$ is necessarily at least $\chi_c(G)$.
It is easily observed that $\chi_c(G) \le 1+\delta^\star(G)$ always.

In the study of $\chi_\ell(G)$ and $\chi_c(G)$ (and indeed also of $\chi^\star_\ell(G)$ and $\chi^\star_c(G)$), it is natural to explicitly take into account structural properties of the graph $G$, and this in fact is a large and ever-expanding subfield of graph theory; however, given how we may define the parameters as in this subsection, it is also valid and interesting to incorporate the properties of the cover graph $H$ instead, without any explicit assumptions on $G$.
With respect to list-covers, correspondence-covers, or general covers, the existence of independent transversals in these terms has already attracted quite some interest, particularly under the condition that the cover graph has bounded maximum degree.

For general covers, such a problem was investigated by Bollob\'as, Erd\H{o}s and Szemer\'edi~\cite{BES75} in the mid 1970s.
For each $D$, they essentially asked for the least integer $\Lambda(D)$ for which the following holds: for any $\Lambda(D)$-fold cover $H$ of some graph $G$ via $L$ such that $\Delta(H) \le D$, there is an independent transversal of $H$ with respect to $L$. Using topological methods, Haxell~\cite{Hax01} proved that $\Lambda(D) \le 2D$, a result which is sharp for every $D$~\cite{SzTa06}.

For list-covers, such a problem was proposed by Reed~\cite{Ree99}. For each $D$, he essentially asked for the least integer $\Lambda_\ell(D)$ for which the following holds: for any $\Lambda_\ell(D)$-fold list-cover $H$ of some graph $G$ via $L$ such that $\Delta(H) \le D$, there is an independent transversal of $H$ with respect to $L$. In fact, he conjectured that $\Lambda_\ell(D)=D+1$. This was disproved by Bohman and Holzman~\cite{BoHo02}; however, with nibble methodology Reed and Sudakov~\cite{ReSu02} proved that $\Lambda_\ell(D) \le D+o(D)$ as $D\to\infty$.

For correspondence-covers, such a problem was proposed by Aharoni and Holzman, see~\cite{LoSu07}. For each $D$, they essentially asked for the least integer $\Lambda_c(D)$ for which the following holds: for any $\Lambda_c(D)$-fold correspondence-cover $H$ of some graph $G$ via $L$ such that $\Delta(H) \le D$, there is an independent transversal of $H$ with respect to $L$. Again with nibble methods, Loh and Sudakov~\cite{LoSu07} proved that $\Lambda_c(D) \le D+o(D)$ as $D\to\infty$. We note that very recently two independent works~\cite{GS22,KaKe22} showed a substantial strengthening of this last result in terms of a part-averaged degree condition for the correspondence-cover.

For each of the three parameters $\Lambda$, $\Lambda_\ell$, $\Lambda_c$, one can define the analogous packing variants. That is, for each $D$, what is the least integer $\Lambda^\star(D)$ (or $\Lambda^\star_\ell(D)$ or $\Lambda^\star_c(D)$) for which the following holds: for any $\Lambda^\star(D)$-fold cover (or $\Lambda^\star_\ell(D)$-fold list-cover or $\Lambda^\star_c(D)$-fold correspondence-cover, respectively) $H$ of some graph $G$ via $L$ such that $\Delta(H) \le D$, there is an IT packing of $H$ with respect to $L$.
Indeed, under the guise of \emph{strong colouring}, $\Lambda^\star(D)$ has already (long) been investigated: independently, Alon~\cite{Alo88} and Fellows~\cite{Fel90} first considered this parameter, and in particular Alon showed using the L\'ovasz local lemma that $\Lambda^\star(D)=O(D)$. Haxell~\cite{Hax08} proved that $\Lambda^\star(D)\le 2.75D+o(D)$ as $D\to\infty$, which is the best known bound to date. The folkloric \emph{strong colouring conjecture} (see~\cite{ABZ07}) asserts that $\Lambda^\star(D)\le 2D$ for all $D$.
Note that this longstanding conjecture has a hint of (and indeed inspired) our Conjecture~\ref{conj:main}.

\subsection{Introduction continued}\label{sub:continued}

In Subsection~\ref{sub:moregeneral}, we have presented essentially three additional problem settings `between' list packing and strong colouring. Equipped with these notions, we can further discuss our results and trajectory in this context.

Let us resume the introduction by first considering how the results on the list packing number extend (or not) to the correspondence packing number, and contrast the respective behaviours. Note that, in this realm, we can maintain our view on the `covered' graph $G$, and so the panoply of results and problems in graph colouring can still guide our explorations for correspondence packing. We note that, just as before, these explorations would be made partly redundant if we were able to prove the following conjecture.

\begin{conjecture}\label{conj:maincorr}
	There exists $C>0$ such that $\chi^\star_c(G) \le C \cdot\chi_c(G)$ for any graph $G$.
\end{conjecture}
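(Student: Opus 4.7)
The plan is to mirror the approach taken for $\chi^\star_\ell$, while identifying where the greater generality of correspondence covers obstructs each step. First, I would verify that the greedy proofs of Theorems~\ref{thm:degeneracy} and~\ref{thm:maxdegree} carry over verbatim to the correspondence setting: the only property they exploit is that, when colouring a vertex greedily, the matching between its list and the list of any previously-coloured neighbour excludes at most one element from its list, which is exactly the defining property of a correspondence cover. This should give $\chi^\star_c(G) \le 2\delta^\star(G)$ and $\chi^\star_c(G) \le 1 + \Delta(G) + \chi_c(G)$. Combined with the chain $\chi_c(G) \ge \chi_\ell(G) \ge C \log \delta^\star(G)$ (the last being Alon's bound, applicable since every list cover is a correspondence cover), this would already yield $\chi^\star_c(G) \le 2^{O(\chi_c(G))}$, the correspondence analogue of our existing exponential support for Conjecture~\ref{conj:main}.

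To close the gap from exponential to linear, I would pursue two complementary strategies. The first is \emph{iterative extraction}: given a $k$-fold correspondence cover $H$ of $G$ with $k = C \cdot \chi_c(G)$, repeatedly pull out an independent transversal and delete its vertices, observing that the residual subcover is again a correspondence cover of $G$ and so admits a further transversal whenever its fold count is at least $\chi_c(G)$. Naively this produces only $k - \chi_c(G) + 1$ disjoint transversals, leaving a deficit of roughly $\chi_c(G)$ that must be closed by structural reshuffling --- for instance, alternating-path exchanges in the spirit of Haxell's topological method, or a carefully prioritised extraction order based on a notion of ``criticality'' of individual parts. The second strategy is probabilistic: sample $k$ correspondence colourings from a well-chosen distribution and apply a Lov\'asz-local-lemma argument to force pairwise disjointness at each vertex, in analogy with our proof of Theorem~\ref{thm:bipartite}.

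The main obstacle, I expect, is the probabilistic half. The proofs of Theorems~\ref{thm:fractional} and~\ref{thm:hallratio} leveraged a global coherence of a single shared colour alphabet across all parts, exploited through the permanent estimate for random binary matrices (Theorem~\ref{thr:Permenant_BernoulliDistributedMatrix}). A correspondence cover admits no such common alphabet: each pair of adjacent parts carries its own independent matching, which destroys the product structure underlying that random matrix model, and also breaks the negative correlation properties we needed in the proof of Theorem~\ref{thm:bipartite}. Any proof of Conjecture~\ref{conj:maincorr} at its full strength will either have to reduce to the list case via some uniformisation of the matchings (say by relabelling each part to turn matchings into identifications where possible), or else develop new topological or algebraic tools tailored to arbitrary matchings between parts. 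My expectation is that Conjecture~\ref{conj:main} for $\chi^\star_\ell$ should be resolved first, and that the correspondence version will demand a substantial additional idea on top of whatever resolves the list case.
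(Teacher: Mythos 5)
The statement you are addressing is an open conjecture; the paper does not prove it either, and your proposal is honestly framed as a research plan rather than a proof, which is appropriate. Your first step is sound and matches the paper exactly: the greedy/Hall argument and the maximum-partial-colouring argument do carry over to correspondence covers, giving $\chi^\star_c(G)\le 2\delta^\star(G)$ and $\chi^\star_c(G)\le 1+\Delta(G)+\chi_c(G)$ (Theorems~\ref{thm:degeneracycorr} and~\ref{thm:maxdegreecorr}).

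There is, however, one concrete error in your assessment of how much these bounds buy you, and it inverts your conclusion about the relative difficulty of the list and correspondence versions. To convert $\chi^\star_c(G)\le 2\delta^\star(G)$ into a bound in terms of $\chi_c(G)$, you invoke the chain $\chi_c(G)\ge\chi_\ell(G)\ge C\log\delta^\star(G)$ and conclude only exponential support. But for the \emph{correspondence} chromatic number a much stronger lower bound is known (Bernshteyn~\cite{Ber16}; Kr\'{a}\v{l}, Pangr\'{a}c and Voss~\cite{KPV05}): $\chi_c(G)\ge C\,\delta^\star(G)/\log\delta^\star(G)$ for all $G$. Combined with Theorem~\ref{thm:degeneracycorr}, this shows $\chi^\star_c(G)$ exceeds $\chi_c(G)$ by at most a logarithmic factor --- the conjecture is already nearly true, and far closer to resolution than its list analogue, for which only the exponential bound is available. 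This is precisely the distinction between list and correspondence covers that matters here: complete bipartite graphs force $\chi_c$ to be nearly linear in the degeneracy, while $\chi_\ell$ can be logarithmic in it. Your expectation that the list version should fall first and that the correspondence version needs extra ideas ``on top'' is therefore backwards with respect to the known partial results, even though your intuition that correspondence covers destroy the shared-alphabet/negative-correlation structure used in Theorems~\ref{thm:fractional} and~\ref{thm:hallratio} is correct (the paper notes those theorems genuinely fail for $\chi^\star_c$, already on complete bipartite graphs). One further data point worth recording: Proposition~\ref{prop:degeneracyDP} shows the constant $C$ in the conjecture cannot be taken smaller than $2$.
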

\noindent
We will see later for this assertion both that it is true up to a logarithmic factor and that it cannot be true with $C$ taken smaller than $2$.

In terms of the number of vertices, the problem of correspondence packing has recently been highlighted (in different terminology) by Yuster~\cite{Yus21}. Casting it as a specialisation of earlier conjectures related to the Hajnal--Szemer\'edi theorem, he stated the conjecture, rephrased in our terminology, that $\chi^\star_c(K_n)=n$ if $n$ is even and $\chi^\star_c(K_n) = n+1$ if $n$ is odd. This assertion would be best possible if true, due to a construction of Catlin~\cite{Cat80}. Note that Catlin's construction also (barely) precludes a direct correspondence packing analogue of Theorem~\ref{thm:n}. Yuster used semirandom methods to prove that $\chi_c^\star(K_n)\le 1.78 n$ for all sufficiently large $n$.

In terms of degree parameters, we indeed have the following generalisations of Theorems~\ref{thm:degeneracy} and~\ref{thm:maxdegree} for correspondence packing.

\begin{theorem}\label{thm:degeneracycorr}
	$\chi^\star_c(G) \le 2\delta^\star(G)$ for any graph $G$.
\end{theorem}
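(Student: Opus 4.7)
The plan is a direct greedy induction along a degeneracy ordering that mirrors the proof of Theorem~\ref{thm:degeneracy}, but uses the matching structure of the correspondence cover in an essential way. Let $d=\delta^\star(G)$, set $k=2d$, and let $H$ be a $k$-fold correspondence-cover of $G$ via $L$, with matching $M_{vv'}$ between $L(v)$ and $L(v')$ for each $vv'\in E(G)$. Fix an ordering $v_1,\ldots,v_n$ of $V(G)$ such that each $v_i$ has at most $d$ neighbours among $\{v_1,\ldots,v_{i-1}\}$. I would build $k$ pairwise disjoint independent transversals $c_1,\ldots,c_k$ of $H$ inductively, extending one vertex at a time.

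The inductive step at $v_i$ reduces to finding a system of distinct representatives. Form the bipartite auxiliary graph $B_i$ with parts $[k]$ (indexing the transversals under construction) and $L(v_i)$, joining $j\in[k]$ to $x\in L(v_i)$ iff setting $c_j(v_i)=x$ creates no conflict, namely iff for every earlier neighbour $v_{i'}$ of $v_i$, the vertex $x$ is not matched to $c_j(v_{i'})$ in $M_{v_i v_{i'}}$. A perfect matching in $B_i$ extends the packing by one vertex. The crucial estimate is a two-sided minimum-degree bound: each $j\in[k]$ has at most $d$ forbidden $x$'s (one per earlier neighbour, since each $M_{v_iv_{i'}}$ is a matching); and symmetrically each $x\in L(v_i)$ has at most $d$ forbidden $j$'s, because for each earlier neighbour $v_{i'}$ the values $c_1(v_{i'}),\ldots,c_k(v_{i'})$ are pairwise distinct, so at most one index $j$ sees $c_j(v_{i'})$ matched to $x$. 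Hence both parts of $B_i$ have minimum degree at least $k-d=d$.

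A short Hall-type verification then yields a perfect matching in $B_i$. If $S\subseteq[k]$ violated Hall's condition, set $T=L(v_i)\setminus N(S)$; every vertex of $T$ has all of its $B_i$-neighbours in $[k]\setminus S$, so $|[k]\setminus S|\ge d$, forcing $|S|\le d$. But when $|S|\le d$, the minimum-degree condition applied to any single element of $S$ already gives $|N(S)|\ge d\ge|S|$. So Hall's condition holds in either regime, a perfect matching exists, and the induction proceeds to produce a full $k$-fold IT packing of $H$, establishing $\chi^\star_c(G)\le 2d$.

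The only delicate point is the second half of the degree estimate: it rests on both the matching property of the cover (we are in the correspondence, not general-cover, setting) and the inductive invariant that the partial packings already built assign distinct values on each earlier vertex. Together these are exactly what keep the $B_i$-neighbourhoods large from both sides, and they are the reason this bare-hands greedy argument yields the clean bound $2\delta^\star(G)$ in the correspondence setting without any further probabilistic or topological input.
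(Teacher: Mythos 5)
Your proposal is correct and follows essentially the same route as the paper: a greedy induction along a degeneracy ordering, the two-sided degree bound of at least $d=k-d$ in the bipartite ``colouring versus colour'' auxiliary graph (using the matching property of the cover and the disjointness of the partial colourings), and Hall's theorem to extend all $k$ transversals to $v_i$ at once. The paper leaves the Hall-condition check as an exercise, which you have spelled out correctly.
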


\begin{theorem}\label{thm:maxdegreecorr}
	$\chi^\star_c(G) \le 1+\Delta(G) + \chi_c(G)$ for any graph $G$.
\end{theorem}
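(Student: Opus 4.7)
The plan is to mirror the proof of Theorem~\ref{thm:maxdegree}, adapting its greedy vertex-by-vertex argument to the correspondence-cover setting. Write $\Delta := \Delta(G)$, $\chi_c := \chi_c(G)$, and $k := 1 + \Delta + \chi_c$. Given a $k$-fold correspondence cover $H$ of $G$ via $L$, I aim to construct bijections $\pi_v\colon [k] \to L(v)$, one per vertex, whose $s$-th coordinates $v \mapsto \pi_v(s)$ give a proper correspondence $L$-colouring for every $s \in [k]$. I begin by extracting a backbone of $\chi_c$ mutually disjoint correspondence $L$-colourings $c_1,\ldots,c_{\chi_c}$: at step $j \le \chi_c$, the residual cover obtained by deleting the previously chosen colours has list sizes $k-j+1\ge 2+\Delta\ge \chi_c$ (using $\chi_c \le 1+\delta^\star(G) \le 1+\Delta$), so the definition of $\chi_c$ yields $c_j$. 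Fix $\pi_v(s) := c_s(v)$ for every $v$ and $s \in [\chi_c]$, reducing the problem to bijectively filling the remaining $1+\Delta$ slots at each vertex consistently with the backbone and with each other.

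For the extension phase, process the vertices in a fixed order $v_1,\ldots,v_n$. At $v_i$, the remaining slots $\{\chi_c+1,\ldots,k\}$ and the residual list $L'(v_i) := L(v_i) \setminus \{c_1(v_i),\ldots,c_{\chi_c}(v_i)\}$ both have size $1+\Delta$. Each previously processed neighbour $u$ of $v_i$ contributes forbidden (slot, colour) pairs $(s, M_{v_iu}(\pi_u(s)))$ for $s > \chi_c$ with $M_{v_iu}(\pi_u(s)) \in L'(v_i)$; because $\pi_u$ is a bijection and $M_{v_iu}$ is a matching, these form a partial matching between slots and $L'(v_i)$. Summing over the at most $\Delta$ previously processed neighbours yields a forbidden bipartite graph whose maximum degree is at most $\Delta$ on each side. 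I then apply Hall's theorem to its complement on $\{\chi_c+1,\ldots,k\} \times L'(v_i)$ to obtain the desired extension at $v_i$.

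The main obstacle is the Hall verification. A hypothetical violator $(S,T)$ with $|S|+|T|>1+\Delta$ would require $S \times T$ to lie entirely in the forbidden graph, giving $|S|\cdot|T| \le \Delta\cdot\min(|S|,|T|)$ and hence $\max(|S|,|T|) \le \Delta$; the resulting estimate $|S|+|T|\le 2\Delta$ does not contradict $|S|+|T|>1+\Delta$ on its own once $\Delta \ge 2$. I expect to close the gap either by choosing the vertex order more carefully so as to control the number of previously processed neighbours at critical steps, or, in the spirit of a Brooks-style swap, by a local exchange that modifies one of the backbone colourings $c_j$ when a tight Hall violator occurs, thereby redistributing a few colours between slot $j$ and one of the extension slots to break the violator. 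The structural core of the argument is essentially the same as for Theorem~\ref{thm:maxdegree}, since in both settings the per-vertex task reduces to the same SDR problem on a bipartite graph whose complement is a union of at most $\Delta$ partial matchings; the only adaptation needed is to replace list-intersection reasoning by the matching $M_{v_iu}$ at the appropriate points.
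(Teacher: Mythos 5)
Your proposal has a genuine gap, and you have correctly located it yourself: the Hall verification in the extension phase fails, and neither of your proposed repairs is carried out. Moreover, the failure is not an artefact of a poor vertex ordering. Concretely, take $\Delta=2$, so the extension phase works with $3$ slots and $3$ residual colours $\{a,b,c\}$ at $v_i$. If one processed neighbour forbids the pairs $(1,a),(2,b)$ and another forbids $(1,b),(2,a)$, then slots $1$ and $2$ can both only receive colour $c$, and the Frobenius--K\"onig violator $S=\{1,2\}$, $T=\{a,b\}$ with $|S|+|T|=4>1+\Delta$ appears. Your own estimate $\max(|S|,|T|)\le\Delta$ only yields $|S|+|T|\le 2\Delta$, which is consistent with this. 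Reordering cannot help in general: in a $\Delta$-regular graph the last vertex in any order has all $\Delta$ of its neighbours already processed, and the adversary controls the matchings $M_{v_iu}$, so this obstruction can always be forced. In effect, a purely greedy sequential extension with $1+\Delta$ slots per vertex is exactly the regime where one needs $2\Delta$ slots (as in Theorem~\ref{thm:degeneracycorr}) for Hall's condition to go through.

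Your second fallback, a ``local exchange,'' is the right instinct, but the working version of it is a \emph{global} augmentation, which is what the paper actually does (following Aharoni, Berger and Ziv): take a maximum partial proper $k$-colouring $\alpha$ of the cover graph $H$ in which no part repeats a colour, and suppose some $x\in L(v_1)$ is uncoloured, so some colour (``red'') is absent from $L(v_1)$. For each part let $r_i$ be its red vertex and let $L'(v_i)$ consist of those vertices of $L(v_i)$ whose colour could be handed to $r_i$ without conflict; since $r_i$ has at most $\Delta$ neighbours in $H$, $|L'(v_i)|\ge k-\Delta=\chi_c(G)+1$. Deleting the at most one neighbour of $x$ per part leaves lists of size at least $\chi_c(G)$, so by the definition of $\chi_c$ there is an independent transversal, which (after swapping in $x$ on the part $L(v_1)$) is recoloured red while each displaced $r_i$ inherits the colour of the transversal's representative in its part. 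This strictly enlarges the domain of $\alpha$, contradicting maximality. Note that this argument never decomposes the packing into a backbone plus extension slots and never commits to a vertex order; the definition of $\chi_c$ is invoked once per augmentation step on the whole graph, which is precisely what circumvents the local Hall failure you ran into.
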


\noindent
Note first that since $\chi^\star_\ell(G)\le \chi^\star_c(G)$, Theorem~\ref{thm:degeneracycorr} implies Theorem~\ref{thm:degeneracy}. Theorem~\ref{thm:maxdegreecorr} does not immediately imply Theorem~\ref{thm:maxdegree}, but their proofs are almost identical.
Again, both results imply that $\chi^\star_c(G)$ is always at most around $2\Delta(G)$, a bound which is sharp up to the factor $2$.  Together with a result noticed independently by Bernshteyn~\cite{Ber16} and Kr\'{a}\v{l}, Pangr\'{a}c and Voss~\cite{KPV05} that for some constant $C>0$ $\chi_c(G) \ge C\delta^\star(G)/\log \delta^\star(G)$ for all $G$, note that Theorem~\ref{thm:degeneracycorr} implies that $\chi^\star_c(G)$ is at most a logarithmic factor larger than $\chi_c(G)$, in support of Conjecture~\ref{conj:maincorr}. Theorem~\ref{thm:degeneracycorr} together with Euler's formula further implies that $\chi^\star_c(G)\le 10$ for any planar graph $G$. In Section~\ref{sec:degeneracy}, we exhibit an example (see Proposition~\ref{prop:degeneracyDP} below) to show that the bound in Theorem~\ref{thm:degeneracycorr} is best possible. Since $\chi_c(G) \le 1+\delta^\star(G)$ always, this example also shows that $C$ in Conjecture~\ref{conj:maincorr} (if true) cannot be taken smaller than $2$.

In terms of direct extensions of Theorems~\ref{thm:fractional} and~\ref{thm:hallratio} (in terms of fractional chromatic number and Hall ratio) to correspondence packing, the just-mentioned result in~\cite{Ber16} and~\cite{KPV05} proves it impossible, as the corresponding statements already fail for complete bipartite graphs.
To be sure, complete bipartite graphs are where the difference between list packing and correspondence packing is clearest.
We are able to establish the correspondence packing strengthening of Theorem~\ref{thm:bipartite}.

\begin{theorem}\label{thm:bipartitecorr}
	$\chi^\star_c(G) \le (1+o(1)) \Delta/\log \Delta$ for any bipartite graph $G$ with $\Delta(G)\le\Delta$, as $\Delta\to\infty$.
\end{theorem}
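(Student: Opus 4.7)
The plan is to mirror the proof of Theorem~\ref{thm:bipartite}, exploiting the fact that its key ingredient, the random-matrix estimate Lemma~\ref{cor:low_probability_zerotransversal}, is purely statistical and cares only about incidence counts, not about whether the cover-edges between two parts are induced by coincident colours (list-cover) or by an arbitrary matching (correspondence-cover). Let $G$ be bipartite with parts $A,B$ and $\Delta(G)\le\Delta$, and let $H$ be a $k$-fold correspondence-cover of $G$ via $L$, where $k=\lceil(1+\eps)\Delta/\log\Delta\rceil$ for an arbitrary fixed $\eps>0$. A $k$-fold correspondence $L$-packing is precisely a system of bijections $\sigma_v\colon L(v)\to[k]$ for each $v\in V(G)$ such that for every cover-edge $xy\in E(H)$ with $x\in L(v)$, $y\in L(v')$, one has $\sigma_v(x)\ne\sigma_{v'}(y)$.

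First I would sample, independently for each $b\in B$, a uniformly random bijection $\sigma_b\colon L(b)\to[k]$; then one thinks of $\sigma_b^{-1}(i)$ as the vertex in $L(b)$ used in the $i$-th transversal. The remaining task is to extend, with positive probability over these samples, to bijections $\sigma_a\colon L(a)\to[k]$ for every $a\in A$ with no conflict across any cover-edge between $L(a)$ and $\bigcup_{b\sim a}L(b)$. For each $a\in A$, encode this extension problem by the $k\times k$ binary compatibility matrix $M^{(a)}$ indexed by $L(a)\times[k]$, with $M^{(a)}_{x,i}=1$ iff no cover-edge from $x$ to $\bigcup_{b\sim a}L(b)$ terminates at layer $i$. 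A $1$-transversal of $M^{(a)}$ corresponds to a valid $\sigma_a$, so the goal reduces to the simultaneous occurrence of $\Per(M^{(a)})>0$ for all $a\in A$.

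I would then apply the symmetric Lovász local lemma to the bad events $B_a=\{\Per(M^{(a)})=0\}$. Each $B_a$ is a function only of $\{\sigma_b:b\in N(a)\}$, so $B_a$ and $B_{a'}$ are mutually independent whenever $N(a)\cap N(a')=\emptyset$; the resulting dependency graph has maximum degree at most $\Delta^2$. It therefore suffices to establish $\Pr[B_a]\le 1/(e(\Delta^2+1))$ for every $a\in A$, and then union-bound (or local-lemma) gives positive probability of simultaneously avoiding all $B_a$.

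The main obstacle, and the one that forces the use of Lemma~\ref{cor:low_probability_zerotransversal} rather than a simple union-bound or second-moment argument, is the absence of clean negative correlation in $M^{(a)}$, mirroring the list-packing situation flagged between Theorems~\ref{thm:bipartite} and~\ref{thm:bipartitecorr}. The crucial point is, however, that the joint distribution of $M^{(a)}$ is determined purely by (i) the incidence matching between $L(a)$ and each $L(b)$ (of size at most $\deg_G(a)\le\Delta$) and (ii) the independence of the $\sigma_b$ across $b\in N(a)$. Within any single $L(b)$, the layers $\sigma_b(y)$ of the distinct $L(a)$-partners form a uniformly random partial injection into $[k]$, which is exactly the random-matrix model to which Lemma~\ref{cor:low_probability_zerotransversal} applies; in particular the lemma is indifferent to whether the underlying matching comes from equal-colour identifications or is arbitrary. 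Hence the same tail bound used for Theorem~\ref{thm:bipartite} carries over verbatim, delivering $\Pr[B_a]$ well below the LLL threshold and yielding the claimed bound $(1+o(1))\Delta/\log\Delta$ as $\Delta\to\infty$.
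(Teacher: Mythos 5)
Your proposal is correct and follows essentially the same route as the paper: randomly order each list on one side, encode the extension problem at each $a\in A$ as a transversal problem for a sum of independent uniformly random permutation matrices, invoke Lemma~\ref{cor:low_probability_zerotransversal}, and finish with the Lov\'asz local lemma over bad events whose dependency degree is at most $\Delta(\Delta-1)$. The only details the paper makes explicit that you elide are the harmless normalisations that every $a\in A$ has degree exactly $\Delta$ and that every cover-matching is perfect, which are needed so that the random model is literally a sum of $\Delta$ full permutation matrices as in the lemma.
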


\noindent
However, the complete bipartite graph $K_{\Delta,\Delta}$ certifies this statement to be sharp up to an asymptotic multiple of $2$, due to the observation from~\cite{Ber16} and~\cite{KPV05}. This gives a sharp contrast with Theorem~\ref{thm:fractional}.  This contrast might appear to follow mainly from the known difference in behaviour between list and correspondence chromatic numbers. On the other hand, we also demonstrate that there are complete bipartite graphs having list chromatic, list packing and correspondence chromatic numbers all equal, but correspondence packing number almost twice as large (Corollary~\ref{cor:bipartite}).

Just as for list packing, we suspect that a more general result holds, namely that Theorem~\ref{thm:bipartitecorr} could be extended to all triangle-free $G$.

\begin{conjecture}\label{conj:trianglefree}
	$\chi^\star_c(G) \le (1+o(1)) \Delta/\log \Delta$ for any triangle-free graph $G$ with $\Delta(G)\le\Delta$, as $\Delta\to\infty$.
\end{conjecture}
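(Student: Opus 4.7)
The plan is to adapt the semirandom (``nibble'') iteration of Johansson~\cite{Joh96+}, refined by Molloy and by Bernshteyn for the list and correspondence settings, to the packing problem, combining it with the probabilistic techniques developed for Theorem~\ref{thm:bipartitecorr}. Throughout we work with a $k$-fold correspondence-cover $H$ of a triangle-free $G$ via $L$ with $k \approx \Delta/\log\Delta$, and seek $k$ disjoint independent transversals. After identifying each part $L(v)$ with $[k]$, such a packing can be encoded as a tuple of permutations $\pi_v \in S_k$, one per vertex, with the property that for every edge $vv' \in E(G)$ and every $j \in [k]$, the pair $(\pi_v(j), \pi_{v'}(j))$ is not a matching edge of $H$ between $L(v)$ and $L(v')$. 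The goal is to show that such a tuple exists.

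The proof would run a semirandom process in $\Theta(\log\Delta)$ rounds; in each round we extend the partial permutations at each vertex on a small $p$-fraction of slots, conditional on not creating packing conflicts with previously fixed choices. One maintains two quantities indexed by $(v,j)$: the list of colours still available for slot $j$ at $v$, and the number of ``bad'' pairs still present between $v$ and its neighbours in layer $j$. In the triangle-free setting, neighbours of any vertex form an independent set of $G$, which (as in Molloy's analysis) allows one to argue that list sizes and bad-pair counts concentrate and decay at respective rates $(1-p)$ and $(1-p)^2$ across a round, provided one can control the relevant blocking probabilities. A Lovász local lemma argument certifies that a good outcome of each round exists; iterating reduces the residual problem to one of tiny maximum degree that can be finished with a direct local lemma or a final packing-extension lemma.

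The principal obstacle, exactly as flagged in our approach to Theorem~\ref{thm:bipartitecorr}, is that the parallel-colouring constraint destroys the negative correlation that powers the single-colouring nibble analysis. In the bipartite case this was circumvented via tailor-made estimates on transversals in random binary matrices (Lemma~\ref{cor:low_probability_zerotransversal}); the present setting demands an analogous but more delicate class of estimates that applies when conflicts at $v$ come from an independent set of neighbours rather than a bipartite side. Concretely, one needs to bound the probability that, conditioned on the round's random choices elsewhere, a particular vertex-layer pair $(v,j)$ has all its remaining available colours blocked simultaneously; in the triangle-free setting, potential blockers correspond to matching edges from $v$ to many pairwise non-adjacent neighbours, each carrying its own random permutation, all coupled through the packing requirement. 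Establishing the required upper bound on this blocking probability is the essential new probabilistic input that the proof of Theorem~\ref{thm:bipartitecorr} does not directly deliver.

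To close the argument, one applies the Lovász local lemma per round to the events ``a fixed $(v,j)$ or a fixed conflict pair deviates from its expected behaviour''; the dependency graph has polynomial degree in $\Delta$, and triangle-freeness keeps local neighbourhoods of bad events sufficiently sparse. Bookkeeping of how list sizes and conflict counts shrink across the $\Theta(\log\Delta)$ rounds then yields the target $(1+o(1))\Delta/\log\Delta$. I expect the hardest step by far to be the probabilistic one in the previous paragraph: extracting, without the convenience of bipartite negative correlation, a usable quantitative blocking estimate whose constants track cleanly through the iteration to match the asymptotic bound.
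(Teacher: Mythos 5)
This statement is Conjecture~\ref{conj:trianglefree}, which the paper explicitly leaves open; there is no proof of it in the paper to compare against, and the surrounding discussion even remarks that a bound strictly smaller than $\Delta$ for triangle-free graphs would already be welcome progress. What you have written is a research plan rather than a proof, and you say so yourself: you identify the blocking-probability estimate as ``the essential new probabilistic input that the proof of Theorem~\ref{thm:bipartitecorr} does not directly deliver'' and then do not supply it. A proposal whose central step is declared missing cannot be accepted as a proof.

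The gap is genuine and not merely a matter of writing out routine details. The bipartite argument (Theorem~\ref{lem:bipartite} via Lemma~\ref{cor:low_probability_zerotransversal}) succeeds because the packing on one side $B$ can be fixed in a single shot by independent uniform orderings of the lists, after which the matrix $M(a)$ at each $a\in A$ is a sum of $n$ \emph{independent} random permutation matrices --- one per neighbour --- and even there the failure of negative correlation among the entries forced the delicate conditional estimates of Lemma~\ref{lem:low_probability_nonzero_l*msubmatrix_sumpartialpermutations_l}. In a general triangle-free graph no such one-round, two-sided decomposition exists: a nibble must carry partial permutations at every vertex through $\Theta(\log\Delta)$ rounds, and the conditioning on ``no packing conflict so far'' correlates the permutations at the neighbours of $v$ (through shared second neighbours and through the survival conditioning), even though those neighbours form an independent set. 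You give no concentration statement, no quantitative decay rates for list sizes or conflict counts under this conditioning, and no analogue of Lemma~\ref{lem:low_probability_nonzero_l*msubmatrix_sumpartialpermutations_l} in the iterated setting. Until that estimate is actually proved --- and the paper's authors evidently could not prove it --- the argument does not close.
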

\noindent
If true, this conjecture would be a striking finding following in a long and acclaimed sequence of results, including the work of Johansson~\cite{Joh96+} and Molloy~\cite{Mol19}, among others.

As we alluded to in Subsection~\ref{sub:moregeneral}, our trajectory tends naturally towards a couple of problem settings interpolating between list/correspondence packing and strong colouring.
For example, analogously to the strong colouring conjecture, it would be interesting to investigate the following possibilities for the two packing parameters introduced at the end of Subsection~\ref{sub:moregeneral}.
\begin{conjecture}\label{conj:ITpacking}
	$\Lambda^\star_\ell(D) \le D+o(D)$ and $\Lambda^\star_c(D) \le D+o(D)$ as $D\to\infty$.
\end{conjecture}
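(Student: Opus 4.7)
The plan is to adapt the semi-random (nibble) arguments of Reed--Sudakov~\cite{ReSu02} and Loh--Sudakov~\cite{LoSu07}, which yield the single-IT bounds $\Lambda_\ell(D), \Lambda_c(D)\le D+o(D)$, so that they produce all $k$ disjoint transversals in parallel. Since every list-cover is a correspondence-cover, it suffices to handle $\Lambda^\star_c(D)$; I would fix $k=(1+\eps)D$ for small $\eps>0$ and aim to show that any $k$-fold correspondence-cover $H$ of any graph $G$ with $\Delta(H)\le D$ admits a $k$-fold IT packing.

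The natural random object to analyse is the following. For each part $L(v)$ pick a uniformly random bijection $\phi_v\colon[k]\to L(v)$, and let the $i$-th candidate transversal be $T_i=\{\phi_v(i):v\in V(G)\}$. Because in a correspondence-cover the edges between $L(v)$ and $L(v')$ form a matching, conditioning on $\phi_v(i)=u$ leaves at most one forbidden value for $\phi_{v'}(i)$, so the conflict probability at vertex $v$ in class $i$ is at most $\deg_H(\phi_v(i))/k \le D/k = 1/(1+\eps)$. This is precisely the per-vertex budget that makes nibble-style arguments applicable.

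I would then run a R\"odl-style nibble in rounds. In round $t$, sample a tiny random subset $I_t$ of still-unassigned indices, tentatively lock in $\phi_v(i)$ for each $v$ and $i\in I_t$, and remove activated vertices together with their conflicts from the cover, maintaining residual part-size $k_t$ and residual maximum degree $D_t$. A standard trajectory analysis should show that $D_t/k_t$ stays uniformly bounded below $1$, so after $O(\log(1/\eps))$ rounds only an $o(1)$ fraction of indices remains unassigned in each part. A final cleanup, either via Haxell's topological theorem applied to the residual cover (whose parts are then much larger than its maximum degree) or via a Lov\'asz-Local-Lemma patching step, should finish the packing.

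The main obstacle is the correlation introduced by the bijection constraint. In the single-IT nibble the choices of different parts are independent and the choice within a part is just a single uniform colour; here the $k$ coordinates $\phi_v(1),\ldots,\phi_v(k)$ inside a single part $L(v)$ are tied together by the requirement that $\phi_v$ be a bijection, which breaks the negative-correlation identities that drive the usual concentration arguments. Joint probabilities of transversal-like events should be controllable via permanent estimates for random binary matrices in the spirit of Theorem~\ref{thr:Permenant_BernoulliDistributedMatrix} and Lemma~\ref{cor:low_probability_zerotransversal}, which are exactly what the authors invoke to circumvent this obstruction in the bipartite setting of Theorem~\ref{thm:bipartite}. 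Propagating such estimates through every round of the nibble, and controlling the cross-index correlations at each step, is where I expect the heavy lifting to lie.
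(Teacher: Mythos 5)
First, note that the statement you are trying to prove is Conjecture~\ref{conj:ITpacking}: the paper does not prove it, and only establishes the weaker bound $\Lambda^\star_\ell(D)\le\Lambda^\star_c(D)\le 2D+o(D)$ via the argument of Section~\ref{sec:maxdegree}. What you have written is a research plan, not a proof, and you say so yourself (``is where I expect the heavy lifting to lie''). The two places where the plan is currently only a hope are exactly the places where the known techniques break. (i) The correlation problem is not a technicality to be ``propagated through the rounds'': the paper states explicitly that the requisite negative correlation fails for packings, and even in the far more structured bipartite setting of Theorem~\ref{thm:bipartite} --- where the randomness is a single uniform bijection per part rather than an evolving nibble process --- the authors need the bespoke estimate of Lemma~\ref{cor:low_probability_zerotransversal}, whose proof already has to work around explicit counterexamples to negative correlation among the entries of a sum of random permutation matrices (e.g.\ $(k,n)=(4,32)$). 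You give no concentration statement that would survive conditioning on the history of earlier nibble rounds, and the trajectory analysis you invoke (``$D_t/k_t$ stays uniformly bounded below $1$'') is precisely what cannot be verified without such a statement.

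(ii) The cleanup step is circular. After the nibble, each part $L(v)$ has a residual set of unassigned indices and a residual set of unused colours, and the remaining transversals must still be pairwise disjoint within each part; so the leftover problem is again an IT-\emph{packing} problem on the residual cover, not a collection of independent single-IT problems. Haxell's theorem only produces one independent transversal; the packing analogue with parts of size $2D$ is the strong colouring conjecture, which is open (best known is $2.75D+o(D)$), and an LLL patching step faces the same disjointness constraint. So the finishing move you propose would require, as a black box, a statement at least as strong as what the paper cannot prove. If you want to make progress here, the concrete suggestion in the paper is Conjecture~\ref{conj:CaKa}, which the authors believe is within reach of the methods of Section~\ref{sec:bip}; Conjecture~\ref{conj:ITpacking} in full generality should be regarded as a genuinely open problem.
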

\noindent
Note that this conjecture implies the following conjectural statements for list and correspondence packing number: for each $\eps>0$ there is some $\Delta_0$ such that $\chi^\star_\ell(G)\le(1+\eps)\Delta$ (respectively, $\chi^\star_c(G)\le(1+\eps)\Delta$) for all $G$ with $\Delta(G)\ge \Delta_0$.
We also note that the argument outlined in Section~\ref{sec:maxdegree} for Theorems~\ref{thm:maxdegree} and~\ref{thm:maxdegreecorr} also shows that $\Lambda^\star_\ell(D)\le \Lambda^\star_c(D)\le 2D+o(D)$ as $D\to\infty$; see both Section~\ref{sec:maxdegree} and a remark in~\cite{LoSu07}.

In considering problems such as Conjecture~\ref{conj:ITpacking}, it is also sensible to explore what holds under some basic structural conditions on the cover graph.
Along these lines, we would like to emphasise the possibilities related to Theorems~\ref{thm:bipartite} and~\ref{thm:bipartitecorr} and Conjecture~\ref{conj:trianglefree} in particular.
Let us reformulate Theorem~\ref{thm:bipartitecorr}: there is some $k=k(\Delta)$ satisfying $k\le (1+o(1))\Delta/\log \Delta$ as $\Delta\to\infty$ such that for any bipartite graph $G$ with $\Delta(G)\le \Delta$, it holds that any $k$-fold correspondence-cover $H$ of $G$ via $L$ admits a correspondence $L$-packing. In a conservative manner, we could propose the following stronger statement.
\begin{conjecture}\label{conj:CaKa}
	There is some $k=k(\Delta)$ satisfying $k\le (1+o(1))\Delta/\log \Delta$ as $\Delta\to\infty$ such that the following holds.
	Suppose that $G$ and $H$ are bipartite graphs such that $H$ is a $k$-fold correspondence-cover of $G$ via some $L$, and that $\Delta(H)\le \Delta$. Then $H$ admits a correspondence $L$-packing.
\end{conjecture}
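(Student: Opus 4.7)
The plan is to adapt the proof of Theorem~\ref{thm:bipartitecorr} by tracking every appearance of the hypothesis $\Delta(G) \le \Delta$ and replacing it, where possible, by the weaker $\Delta(H) \le \Delta$. Since $\Delta(H) \le \Delta(G)$ in any correspondence cover, this would be a genuine strengthening, and the broad architecture of the earlier proof should survive unchanged, with the adaptations concentrated at the analytical core.

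First I would use the bipartition $(A,B)$ of $G$ to induce a bipartition of $H$ with $X := \bigcup_{v \in A} L(v)$ on one side and $Y := \bigcup_{v' \in B} L(v')$ on the other, and set $k = (1+o(1))\Delta/\log\Delta$. For each $v \in A$, draw an independent uniformly random bijection $\pi_v \colon L(v) \to [k]$; this colours $X$ in such a way that every colour class restricted to $X$ is already a transversal. The remaining task is to choose bijections $\pi_{v'}\colon L(v') \to [k]$ for each $v' \in B$ so that no $H$-edge becomes monochromatic. For each $v' \in B$, define the $k \times k$ binary matrix $M^{(v')}$ with $M^{(v')}_{u',c} = 1$ iff no $H$-neighbour of $u' \in L(v')$ in $X$ has been assigned colour $c$; a valid extension exists iff $\Per(M^{(v')}) > 0$ for every $v' \in B$. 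I would then apply the Lov\'asz local lemma to the bad events $\mathcal{B}_{v'} = \{\Per(M^{(v')}) = 0\}$: two such events are dependent only through a shared $A$-vertex whose random bijection influences both matrices, and since each $L(v')$ has at most $k\Delta$ neighbours in $X$, the LLL dependency degree is $O(k^2 \Delta^2)$, controlled entirely by $\Delta(H)$.

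The bound on $\Pr[\mathcal{B}_{v'}]$ should come from the permanent estimate Lemma~\ref{cor:low_probability_zerotransversal}, or a suitable strengthening. Each row of $M^{(v')}$ has its $1$-count determined by the colours assigned to at most $\deg_H(u') \le \Delta$ vertices in $X$, so the row-level probability analysis is unchanged under the weaker hypothesis. The main obstacle I expect is verifying that the \emph{global} permanent-positivity argument underlying Lemma~\ref{cor:low_probability_zerotransversal} transfers: in the setting $\Delta(G) \le \Delta$ at most $\Delta$ independent random permutations influence a given $M^{(v')}$, a feature that may be tacitly exploited in the existing estimate, whereas under $\Delta(H) \le \Delta$ alone this count can be as large as $k\Delta$. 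One must check that the additional independence only helps (intuitively it should, by providing extra averaging across columns), or else absorb the difference by a conditioning argument that fixes a typical colouring pattern on the ``bulk'' of $A$-neighbours and treats the remainder via the original lemma. A further fallback would be to partition the $A$-neighbours of $v'$ according to the size of their $H$-matching into $L(v')$ and to apply the permanent estimate piece-by-piece over these classes, reducing to a situation closer to the $\Delta(G) \le \Delta$ regime.
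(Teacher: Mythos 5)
The statement you are trying to prove is Conjecture~\ref{conj:CaKa}, which the paper leaves open: the authors only remark in Section~\ref{sec:conclusion} that it ``seems within reach with the methods of Section~\ref{sec:bip}''. Your architecture (random bijections on one side of the cover, extension on the other side via transversals of random matrices, Lov\'asz local lemma to finish) is exactly the architecture of the proof of Theorem~\ref{lem:bipartite}, and your handling of the dependency degree for the local lemma is fine, since the bad-event probability is of order $\exp(-\Delta^{\eps/3})$ and easily absorbs a dependency degree polynomial in $k\Delta$.

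The genuine gap is the one you flag yourself but do not close, and it is precisely the step that makes this a conjecture rather than a corollary of Theorem~\ref{lem:bipartite}. The proof of Theorem~\ref{lem:bipartite} begins with the reduction ``we may assume every matching between $L(u)$ and $L(v)$ is a perfect matching''; this is legitimate under $\Delta(G)\le\Delta$ (adding matching edges only makes packing harder) but is \emph{forbidden} under $\Delta(H)\le\Delta$, since completing partial matchings inflates the cover degree. Consequently Lemma~\ref{cor:low_probability_zerotransversal} --- which is stated and proved for a sum of $n$ independent uniformly random \emph{full} $k\times k$ permutation matrices, with $n$ equal to a $G$-degree --- does not apply: in your setting $M^{(v')}$ is a sum of up to $k\Delta$ independent random \emph{partial} permutation matrices of widely varying and possibly very small supports. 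The internal machinery of Lemma~\ref{lem:low_probability_nonzero_l*msubmatrix_sumpartialpermutations_l} leans on the full-permutation structure throughout: the identity $\Pr_w(P^{\rr}_{i,j}=0)=1-\tfrac{1}{k-(t-1)}$, the surjective map $\phi$ used to establish the negative correlation~\eqref{ineq:negativecorrelation}, and the Chernoff estimates on column sums all assume each $P^{\rr}$ is a uniform permutation matrix restricted to a column set. Your proposed fallbacks (conditioning on a ``typical bulk'' or partitioning the $G$-neighbours by matching size and applying the lemma piecewise) are not worked out, and it is not clear that either preserves the independence and negative-correlation structure the lemma needs; note also that the paper documents genuine failures of negative correlation even in the full-permutation case. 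Until a version of Lemma~\ref{cor:low_probability_zerotransversal} is proved for sums of many sparse independent random partial permutation matrices, the argument is a plan rather than a proof.
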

\noindent
If true, this would constitute a correspondence packing analogue of a recent result due to two of the authors~\cite{CaKa22}.
Beyond a common strengthening of Conjectures~\ref{conj:trianglefree} and~\ref{conj:CaKa}, one could more boldly speculate the following.
%\begin{conjecture}\label{conj:bold}
%	For every graph $F$, there exists some $C_F>0$ such that if $k= C_F\Delta/\log\Delta$ then the following holds.
%	Suppose that $G$ and $H$ are graphs such that $H$ is a $k$-fold correspondence-cover of $G$ via some $L$, that $H$ contains no copy of $F$ as a subgraph, and that $\Delta(H)\le \Delta$. Then $H$ admits a correspondence $L$-packing.
%\end{conjecture}
\begin{conjecture}\label{conj:bold}
	For every $r\ge 3$, there is some $C_r>0$ such that the following holds for $k= C_r\Delta/\log\Delta$.
	Suppose that $G$ and $H$ are graphs such that $H$ is a $k$-fold correspondence-cover of $G$ via some $L$, $H$ contains no copy of $K_r$, and $\Delta(H)\le \Delta$. Then $H$ admits a correspondence $L$-packing.
\end{conjecture}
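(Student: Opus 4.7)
My plan is to combine the Johansson--Molloy nibble approach to list-colouring $K_r$-free graphs with the random-matrix permanent techniques underlying Theorem~\ref{thm:bipartitecorr}. A correspondence $L$-packing of $H$ is equivalent to a ``layering'' of $V(H)$ by labels in $[k]$ in which each part $L(v)$ bijects with $[k]$ and each layer-class is independent in $H$. I would construct such a layering in two phases: a long nibble that shrinks the effective lists from $k$ down to $\Theta(\Delta/\log\Delta)$, followed by a finishing step that completes the packing at this reduced scale.

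For the nibble, process a small random batch of still-unassigned cover vertices per round, tentatively attaching to each chosen $x\in L(v)$ a uniformly random admissible layer, and then discarding any assignment that conflicts with an edge of $H$ or duplicates a layer within $L(v)$. As in Molloy's proof, the key invariants to maintain are that for each $v\in V(G)$ the residual list $L_t(v)$ and the residual admissible-layer set $A_t(x)\subseteq [k]$ at each $x\in L_t(v)$ are of comparable size, and that within each neighbourhood $N_H(x)$ the admissible-layer sets remain well-distributed. The $K_r$-free hypothesis enters through the $K_{r-1}$-freeness of each $N_H(x)$, which via entropy compression or cluster-expansion estimates lets the ratio $|A_t(x)|/|L_t(v)|$ remain close to $1$ even after the lists shrink by a large factor.

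For the finishing step, once the lists and admissible-layer sets have been reduced to size $O(\Delta/\log\Delta)$, assign the remaining cover vertices by a uniformly random bijection in each $L(v)$ and apply the Lov\'asz local lemma. The natural bad event at a cover vertex $x$ or an edge $xy$ of $H$ is that no valid completion exists, which is precisely the statement that the permanent of a certain $\{0,1\}$-matrix of residual compatibilities vanishes; this is controlled by (a strengthening of) Lemma~\ref{cor:low_probability_zerotransversal} together with Theorem~\ref{thr:Permenant_BernoulliDistributedMatrix}. The nibble parameters have to be tuned so that the relevant matrices are sufficiently dense and the dependency graph of the bad events sufficiently sparse for LLL to apply.

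The main obstacle, already visible in the proof of Theorem~\ref{thm:bipartitecorr}, is the absence of negative correlation among the events that distinct layers avoid a conflict at a given edge, which means Janson-type concentration is unavailable for packings. In the bipartite case this is finessed by exploiting the clean row/column structure of the associated random matrix; transplanting the argument to the $K_r$-free setting will require showing that, despite the loss of bipartiteness, the permanent estimate remains strong enough to support the dependency analysis. I expect this to be the crux of the proof, and the reason why only a constant $C_r$ depending on $r$ (rather than the sharp $1+o(1)$ of Theorems~\ref{thm:bipartite} and~\ref{thm:bipartitecorr}) is currently conjectured; a plausible route to a concrete $C_r$ is to recurse on a Johansson-style decomposition of $V(G)$ into parts of bounded density and to iterate the bipartite argument across pairs of parts, losing only a constant factor per level of the recursion.
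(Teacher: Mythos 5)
This statement is Conjecture~\ref{conj:bold}: the paper does not prove it, and in fact explicitly notes that it would imply a chain of well-known open conjectures (Anderson--Bernshteyn--Dhawan, hence Alon--Krivelevich--Sudakov, hence Ajtai--Erd\H{o}s--Koml\'os--Szemer\'edi). So there is no proof in the paper to compare against, and any correct argument would be a major result requiring an idea that resolves those problems as well. Your proposal is a research plan, not a proof, and you yourself flag its two load-bearing steps as unresolved.

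Concretely, the gaps are these. First, the nibble phase: Johansson--Molloy-type analyses maintain concentration for a \emph{single} colouring; for a packing you must track $k$ layers simultaneously inside each part $L(v)$, and the events ``layer $i$ survives at $x$'' for distinct $i$ are exactly the events whose negative correlation fails (this is the obstruction the paper already had to work around in the bipartite case, and there it was only circumvented by exploiting the rigid row/column structure of sums of random permutation matrices). You do not explain how the invariants $|A_t(x)|/|L_t(v)|\approx 1$ would be propagated without such correlation control. Second, the finishing step: Lemma~\ref{cor:low_probability_zerotransversal} and Theorem~\ref{thr:Permenant_BernoulliDistributedMatrix} concern matrices whose entries are (close to) independent or arise from independent permutation matrices; in a general $K_r$-free cover the residual compatibility matrix at a vertex has entries that are heavily dependent through the shared random bijections at neighbouring parts, and no permanent estimate in the paper applies. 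Third, the proposed recursion ``iterate the bipartite argument across pairs of parts'' does not compose: a packing is a global object (each $L(v)$ must biject with $[k]$ across all $k$ transversals at once), so packings of induced pieces cannot simply be concatenated, and a bounded-density decomposition is not a decomposition into boundedly many bipartite pieces. As written, the proposal identifies the right obstacles but supplies no mechanism to overcome any of them.
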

\noindent
If true, this would imply a recent conjecture of Anderson, Bernshteyn and Dhawan~\cite{ABD23}, which in turn would imply a conjecture of Alon, Krivelevich and Sudakov~\cite{AKS99}, which in turn would imply a conjecture of Ajtai, Erd\H{o}s, Koml\'os and Szemer\'edi~\cite{AEKS81}. Due to the connections with these last two central and longstanding conjectures, even partial progress on Conjecture~\ref{conj:bold}, in either direction, would be intriguing.

\subsection{Outline of the paper}\label{sub:outline}

See Tables~\ref{table:overview} and~\ref{table:auxiliary} for an overview of our results that refers to where we state and prove them in the paper. In Section~\ref{sec:order} we prove Theorem~\ref{thm:n} by showing that $\chi_{\ell}^\star(K_n)=n$ and $\chi_{\ell}^\star(K^-_{n})=n-1$, where $K_n^-$ equals $K_n$ minus one edge.
In Section~\ref{sec:degeneracy} we prove Theorem~\ref{thm:degeneracy} by doing this for the correspondence version, Theorem~\ref{thm:degeneracycorr}, through an application of Hall's marriage theorem.
In Section~\ref{sec:maxdegree} we prove Theorem~\ref{thm:maxdegreecorr} by showing that a partial colouring that does not use colours from every list is not a maximum partial colouring. Theorem~\ref{thm:maxdegree} is proved with exactly the same method.
In Section~\ref{sec:randommatrix} we generalise a theorem of Everett and Stein~\cite{ES73} on the permanent of random $k\times k$ matrices. This essentially says that (under certain conditions) the probability that a random binary matrix $A$ has a permanent equal to zero is approximately the probability that it contains a row or column of zeros.
This is applied in Section~\ref{sec:approx} to prove Theorems~\ref{thm:fractional} and~\ref{thm:hallratio}. Here we construct some matrices for every $M^\star(v)$ for every vertex $v$ in which a $1$-transversal corresponds with an ordering of the colours in the list, resulting in different $L$-colourings.
In Section~\ref{sec:bip} we prove that the list packing number of a bipartite graph with parts having maximum degrees $\Delta_A$ and $\Delta_B$ is at most $\min\{\Delta_A, \Delta_B\}+1$.
We show sharpness of this result and prove that there are complete bipartite graphs for which the correspondence packing number is nearly twice the correspondence chromatic number.
At the end, we prove Theorem~\ref{thm:bipartitecorr}, and thus also Theorem~\ref{thm:bipartite}.

  \begin{table}[htb!]
  \centering
  \begin{tabular}{ |c || c | } 
    \hline 
    Result & Reference \\ 
    \hline \hline 
        $\chi_{\ell}^{\star}(G)\leq n $ & Thm.~\ref{thm:n}; Sec.~\ref{sec:order}  \\
    with equality if and only if $G$ is the complete graph $K_n$. & \\
    \hline
    $\chi_{c}^{\star}(G) \leq 2 d$  if $G$ is $d$-degenerate.&  Thm.~\ref{thm:degeneracycorr}, Prop.~\ref{prop:degeneracyDP} \\ 
    For all $d$, there is a $d$-degenerate $G$ with $\chi_{c}^{\star}(G) = 2d$. & Thm.~\ref{thm:degeneracyexample}; Sec.~\ref{sec:degeneracy} \\
  For all $d$, there is a $d$-degenerate $G$ with $\chi_{\ell}^{\star}(G) \geq d+2$. &\\
\hline
\ $\chi_{\ell}^{\star}(G) \leq 1+ \Delta + \chi_{\ell}(G)$ & Thms.~\ref{thm:maxdegree} and~\ref{thm:maxdegreecorr}; \\
\ $\chi_{c}^{\star}(G) \leq 1+ \Delta + \chi_c(G)$ & Sec.~\ref{sec:maxdegree} \\

\hline     \hline
$\chi_{\ell}^{\star}(G) \leq (5+o(1)) \cdot \chi_f(G) \cdot \log n$. & Thms.~\ref{thm:fractional},~\ref{th:bipartiteLogN},~\ref{thm:fractional_restated};\\
    %\hline
$\chi_{\ell}^{\star}(G) \leq (1+o(1)) \cdot \chi_f(G) \cdot \log n$, if $\chi_f(G)$ is bounded.  &  Sec.~\ref{sec:approx} \\
   %\hline 
$\chi_{\ell}^{\star}(G) \leq (1+o(1)) \log_2 n$, if $G$ is bipartite. &\\    
    \hline   
    \hline
$\chi_{\ell}^{\star}(G) \leq \min(\Delta_A, \Delta_B)+1$ & Lem.~\ref{lem:bipgreedylist}; Sec.~\ref{sec:bip} \\    
\ if $G=(A\cup B,E)$ is bipartite with maximum degrees $\Delta_A, \Delta_B$. & \\
    \hline
\ $\chi_{\ell}^{\star}(K_{a,b})=b+1$  and $\chi_c^{\star}(K_{a,b})=2b$ & Cor.~\ref{cor:bipartite}; Sec.~\ref{sec:bip} \\
\ for all $b$ and $a=a(b)$ large enough.&\\
    \hline
    $\chi_{c}^{\star}(G) \leq (1+o(1)) \Delta /\log \Delta$  if $G$ is bipartite.& Thm.~\ref{thm:bipartitecorr}; Sec.~\ref{sec:bip} \\    
    \hline

  \end{tabular}
  \caption{An overview of our results on the list packing number $\chi_{\ell}^{\star}(G)$ and the correspondence packing number $\chi_c^{\star}(G)$ of a graph $G$. Here $n $ denotes the number of vertices and $\Delta$ is the maximum degree of $G$. 
Recall that $\chi_{\ell }^{\star}(G) \leq \chi_c^{\star}(G)$ for every graph $G$.
  }
  \label{table:overview}
  \end{table}

  \begin{table}[htb!]
  \centering
  \begin{tabular}{ |c || c | } 
    \hline 
    Result & Reference \\ 
    \hline \hline 
 A random $k\times k$ binary matrix with independent& \\
      Bernoulli($1-p$) distributed entries has no $1$-transversal & Thm.~\ref{thr:Permenant_BernoulliDistributedMatrix}; Sec.~\ref{sec:randommatrix} \\
     with probability $2k p^k (1+o(1))$,  as $k \rightarrow \infty$,&\\
    
 provided $k^5 p^k \rightarrow 0$. &\\
    \hline

        If $k\geq \lceil (1+\epsilon) n/ \log n \rceil$, then a sum of $n$ independent uniformly & \\
     random $k \times k$ permutation matrices has a $0$-transversal& Lem.~\ref{cor:low_probability_zerotransversal}; Sec.~\ref{sec:bip} \\
    with probability at least $1-3k^2 \cdot \exp(-n^{\epsilon/3})$.& \\
    \hline

  \end{tabular}
  \caption{Two of our auxiliary results on random matrices, perhaps of independent interest.}
  \label{table:auxiliary}
  \end{table}

\subsection{Further preliminaries}\label{sub:prelim}

In this subsection, we collect some basic results we require for the proofs.
We will use the following formulation of Hall's marriage theorem.

\begin{theorem}[\cite{Hal35}]
	Given a family $\mathcal{F}$ of finite subsets of some ground set $X$, where the subsets are counted with multiplicity, suppose $\mathcal{F}$ satisfies the \emph{marriage condition}, that is that for each subfamily $\mathcal{F}' \subseteq \mathcal{F}$
	\begin{align*}
		|\mathcal{F}'| \le \left|\bigcup_{A\in \mathcal{F}'} A\right|.
	\end{align*}
	Then there is an injective function $f: \mathcal{F}\to X$ such that $f(A)$ is an element of the set $A$ for every $A\in \mathcal{F}$, that is, the image $f(\mathcal{F})$ is a \emph{system of distinct representatives} of $\mathcal{F}$.
\end{theorem}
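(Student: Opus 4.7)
The plan is to prove Hall's theorem by induction on $|\mathcal{F}|$. The base case $|\mathcal{F}|\le 1$ is immediate: if $\mathcal{F}=\emptyset$ then the empty function works; if $\mathcal{F}=\{A\}$ then Hall's condition forces $|A|\ge 1$, so any element of $A$ may be chosen as $f(A)$. For the induction step with $|\mathcal{F}|\ge 2$, I would split into two cases according to whether Hall's condition has any ``slack'' on proper subfamilies.

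In the first case, assume that every proper nonempty subfamily $\mathcal{F}'\subsetneq \mathcal{F}$ satisfies the strict inequality $|\mathcal{F}'|<|\bigcup_{A\in\mathcal{F}'}A|$. Pick any $A_0\in\mathcal{F}$; Hall's condition applied to $\{A_0\}$ gives $|A_0|\ge 1$, so we may select $x\in A_0$ and set $f(A_0)=x$. I then claim that the reduced family $\mathcal{F}\setminus\{A_0\}$, regarded as a family of subsets of $X\setminus\{x\}$ (i.e.\ with $x$ removed from each set), still satisfies the marriage condition: for any $\mathcal{F}'\subseteq \mathcal{F}\setminus\{A_0\}$, removing $x$ shrinks $\bigcup_{A\in\mathcal{F}'}A$ by at most $1$, while the strict inequality in the original family gives $|\mathcal{F}'|\le|\bigcup_{A\in\mathcal{F}'}A|-1$. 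Apply the induction hypothesis to conclude.

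In the second case, some proper nonempty subfamily $\mathcal{F}^\ast\subsetneq\mathcal{F}$ is ``tight'', meaning $|\mathcal{F}^\ast|=|\bigcup_{A\in\mathcal{F}^\ast}A|=:|X^\ast|$. Since $|\mathcal{F}^\ast|<|\mathcal{F}|$, the induction hypothesis applied to $\mathcal{F}^\ast$ (with ground set $X^\ast$) produces an injective SDR $f^\ast\colon \mathcal{F}^\ast\to X^\ast$, which must in fact be a bijection onto $X^\ast$. It remains to handle $\mathcal{F}\setminus\mathcal{F}^\ast$ on the ground set $X\setminus X^\ast$. The key verification is that this reduced family also satisfies Hall's condition: for any $\mathcal{G}\subseteq\mathcal{F}\setminus\mathcal{F}^\ast$, applying the original Hall condition to $\mathcal{G}\cup\mathcal{F}^\ast$ yields
\[
|\mathcal{G}|+|\mathcal{F}^\ast|=|\mathcal{G}\cup\mathcal{F}^\ast|\le \Bigl|\bigcup_{A\in\mathcal{G}\cup\mathcal{F}^\ast}A\Bigr|=|X^\ast|+\Bigl|\bigcup_{A\in\mathcal{G}}(A\setminus X^\ast)\Bigr|,
\]
and using $|\mathcal{F}^\ast|=|X^\ast|$ gives the required inequality. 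Induction then produces an SDR for $\mathcal{F}\setminus\mathcal{F}^\ast$ using elements of $X\setminus X^\ast$, which combined with $f^\ast$ yields an injective $f$ on all of $\mathcal{F}$.

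The main obstacle is conceptual rather than technical: one must recognise that the dichotomy ``every proper subfamily has strict surplus'' versus ``some proper subfamily is tight'' is exactly the right one for the induction to go through, because only strict surplus allows arbitrary choices, while tightness forces one to commit to matching the tight subfamily internally and then peel it off cleanly. A minor subtlety is that the family is counted with multiplicity, so repeated sets in $\mathcal{F}$ are treated as distinct elements of the family; Hall's condition then automatically forces any repeated set to have cardinality at least its multiplicity, which is precisely what the induction needs.
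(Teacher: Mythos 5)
Your proof is correct. The paper does not prove this statement at all — it is quoted as a classical result with a citation to Hall's 1935 paper — so there is no in-paper argument to compare against. What you have written is the standard Halmos--Vaughan induction (strict surplus everywhere versus a tight subfamily that can be matched internally and peeled off), and both branches of your case analysis check out: in the first case the deletion of $x$ costs at most one element against a surplus of at least one, and in the second case the disjointness of $\mathcal{G}$ and $\mathcal{F}^\ast$ together with $|\mathcal{F}^\ast|=|X^\ast|$ gives exactly the Hall condition for the residual family on $X\setminus X^\ast$. Your closing remark about multiplicities is also apt, since the paper explicitly applies the theorem to families counted with multiplicity.
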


%Recall that a binary matrix $A$ has no $1$-transversal if and only if $\Per(A)=0.$ 
The following elementary criterion, also known as the Frobenius--K\"onig theorem, allows us to efficiently count the matrices that do not have a $1$-transversal. It is a consequence of Hall's theorem, but we include a proof for completeness.

\begin{lemma}[see~\cite{ASurveyOfMatrixTheoryandMatrixInequalitiesAllyandBacon1964}]\label{le:distinctrepr}
	Let $A=(A_{i,j}), i \in I,j\in J$ be a $k\times k$ binary matrix. Then $A$ has no $1$-transversal if and only if there exist $S \subseteq I$ and $T \subseteq J$ with $|S|+|T|>k$ such that the submatrix $A_{S\times T}$  is a zero matrix.
\end{lemma}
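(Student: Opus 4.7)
The plan is to prove each direction separately, with the nontrivial direction reducing to Hall's marriage theorem via the standard bipartite-matching reformulation of a $1$-transversal.

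For the easy (``if'') direction, I would suppose we have $S \subseteq I$ and $T \subseteq J$ with $|S|+|T|>k$ such that $A_{S\times T}$ is the zero matrix, and argue that a $1$-transversal cannot exist. A $1$-transversal corresponds to a permutation $\sigma$ of $[k]$ with $A_{i,\sigma(i)} = 1$ for all $i$. The zero-block condition forces $\sigma(i) \notin T$ for every $i \in S$, so $\sigma(S) \subseteq J \setminus T$, a set of size $k - |T| < |S|$. This contradicts injectivity of $\sigma$ on $S$.

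For the (``only if'') direction, I would encode $A$ as a bipartite graph $B$ with parts $I$ and $J$ and an edge $ij$ whenever $A_{i,j} = 1$, so that $1$-transversals of $A$ correspond bijectively to perfect matchings of $B$. If $A$ has no $1$-transversal, then $B$ has no perfect matching, and by Hall's marriage theorem the marriage condition fails: there exists $S \subseteq I$ with $|N_B(S)| < |S|$. Setting $T := J \setminus N_B(S)$ gives $A_{i,j} = 0$ for every $i \in S$ and $j \in T$, so $A_{S \times T}$ is a zero matrix, and
\[
|S| + |T| = |S| + k - |N_B(S)| > k,
\]
as required.

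There is no real obstacle here; the only thing to be slightly careful about is making sure Hall's theorem is applied to a family indexed so that a failure of the marriage condition yields a subset on the correct side (rows), which is handled by viewing the family $\mathcal{F} = \{N_B(i) : i \in I\}$ of row-neighbourhoods inside the ground set $J$. The content of the lemma is essentially that Hall's condition for the bipartite graph of ones is equivalent to the nonexistence of a large all-zero combinatorial rectangle.
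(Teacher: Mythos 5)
Your proof is correct and follows essentially the same route as the paper: both reduce the nonexistence of a $1$-transversal to a failure of Hall's condition for the bipartite incidence structure of the one-entries, and then translate the deficient set into a large all-zero rectangle (the paper works with column-neighbourhoods $R_j$ where you work with row-neighbourhoods, and it runs the easy direction back through the same Hall equivalence where you give a direct pigeonhole argument, but these are only cosmetic differences).
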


\begin{proof}
	For each column $j\in J$, let $R_j \subseteq I$ denote the set of rows $i$ such that $A_{ij}=1$.
	Observe that $A$ has no $1$-transversal if and only if $\{R_1,\ldots, R_{|J|}\}$ has no system of distinct representatives. Furthermore, by Hall's marriage theorem, this is true if and only if there exists $T \subseteq J$ such that $|\bigcup_{j \in T} R_j| < |T|$.

	Assuming the existence of such a set $T$, let $S:= I \setminus \bigcup_{j \in T} R_j$, which is the set of rows that do not contain any $1$ on any column of $T$. Then $A_{S\times T}$  is a zero matrix.  Furthermore, $|S|+|T|>|I|-|T|+|T|=k$, as desired. Conversely, if $A_{S\times T}$  is a zero submatrix with $|S|+|T|>k$, then $|\bigcup_{j \in T} R_j|\leq |I \setminus S|=k -|S| < |T|$, as required.
\end{proof}

Given a probability space, the $\{0,1\}$-valued random variables~$X_1,\dotsc,X_n$ are
\emph{negatively correlated} if for each subset~$S$ of~$\{1,\dotsc,n\}$,
\[
	\Pr\big(X_i=1, \forall i\in S\big)\le\prod_{i\in S}\Pr(X_i=1).
\]
Correspondingly, we say that a collection of events is negatively correlated if the random variables given by their indicator functions are negatively correlated. We will use the following Chernoff bound.

\begin{theorem}[see~\cite{PS97}]\label{thm:chernoff1}
	Let~$X_1, \dotsc, X_n$ be $\{0,1\}$-valued random variables, and let $X = \sum_{i=1}^{n}X_i$.
	If the variables~$X_1,\dotsc,X_n$ are negatively correlated, then
	\[
		\forall \delta \in [0,1],\quad \Pr\big(X\ge(1+\delta)\EE X\big) \le e^{-\delta^2\EE X/3}.
	\]
	If the variables~$1-X_1,\dotsc,1-X_n$ are negatively correlated, then
	\[
		\forall \eta\in [0,1],\quad \Pr\big(X\le(1-\eta)\EE X\big) \le e^{-\eta^2\EE X/2}.
	\]
\end{theorem}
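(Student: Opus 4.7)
The plan is to prove both tail bounds by the standard exponential moment (Chernoff) method. The key observation is that when $X_i\in\{0,1\}$ each factor of the moment generating function is affine in $X_i$, so a subset expansion of the product will let us convert the negative correlation hypothesis directly into the classical independent--MGF bound.

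For the upper tail, I would fix $t>0$ and apply Markov's inequality to $e^{tX}$ to obtain $\Pr(X\geq (1+\delta)\mu)\leq e^{-t(1+\delta)\mu}\,\EE[e^{tX}]$, where $\mu=\EE X$. Using the exact identity $e^{tX_i}=1+(e^t-1)X_i$ (valid because $X_i\in\{0,1\}$), the product expands as
\[
\EE[e^{tX}] \;=\; \sum_{S\subseteq [n]}(e^t-1)^{|S|}\,\Pr(X_i=1\text{ for all }i\in S).
\]
Since every coefficient $(e^t-1)^{|S|}$ is nonnegative, the negative correlation of $X_1,\dotsc,X_n$ lets us replace each joint probability by the product of its marginals. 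Recollapsing the sum gives $\EE[e^{tX}]\leq\prod_i(1+(e^t-1)p_i)\leq \exp((e^t-1)\mu)$ with $p_i=\Pr(X_i=1)$. Optimising with $t=\log(1+\delta)$ and invoking the elementary inequality $(1+\delta)\log(1+\delta)\geq \delta+\delta^2/3$, valid on $[0,1]$, yields the claimed $e^{-\delta^2\mu/3}$.

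The lower tail is the only mildly subtle part, and the step I expect to be the main (and really only) obstacle. If one naively expands $e^{-tX_i}=1-(1-e^{-t})X_i$, then the coefficients $-(1-e^{-t})^{|S|}$ alternate in sign, so the negative correlation of the $X_i$ cannot be used term-by-term. The fix is to pass to $Y_i=1-X_i$: a one-line check gives the identity
\[
e^{-tX_i} \;=\; e^{-t}\bigl(1+(e^t-1)Y_i\bigr),
\]
so after multiplying out the product the coefficients are again nonnegative in the $Y_i$. The negative correlation of the $Y_i$, which is precisely the second hypothesis of the theorem, then produces, by the same manipulation as above,
\[
\EE[e^{-tX}] \;\leq\; \prod_i\bigl(1+p_i(e^{-t}-1)\bigr) \;\leq\; \exp\bigl((e^{-t}-1)\mu\bigr).
\]
Markov's inequality applied to $e^{-tX}$, the choice $t=-\log(1-\eta)$, and the elementary inequality $(1-\eta)\log(1-\eta)\geq -\eta+\eta^2/2$ on $[0,1]$ then close out the argument. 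Everything beyond the reformulation in terms of $Y_i$ is classical Chernoff calculus.
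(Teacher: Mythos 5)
Your proof is correct; the paper gives no proof of this statement at all (it is quoted with the citation to Panconesi--Srinivasan), and your argument --- Markov on $e^{tX}$, the exact subset expansion of $\prod_i\bigl(1+(e^t-1)X_i\bigr)$ so that negative correlation can be applied termwise to nonnegative coefficients, and the passage to $Y_i=1-X_i$ for the lower tail, which is precisely why the theorem carries two separate correlation hypotheses --- is exactly the standard proof from that source. The elementary inequalities $(1+\delta)\log(1+\delta)\ge\delta+\delta^2/3$ and $(1-\eta)\log(1-\eta)\ge-\eta+\eta^2/2$ on $[0,1]$ that you invoke do hold, so nothing needs to be added.
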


\noindent
In our applications of Theorem~\ref{thm:chernoff1}, we will sometimes use the following simple lemma.
\begin{lemma}~\label{lem:negcorr}
	Let~$X_1, \dotsc, X_n$ be $\{0,1\}$-valued random variables such that $\Pr(X_i=X_j=1)=0$ for all distinct $i,j \in\{1,\dotsc,n\}$. Then $X_1, \ldots, X_n$ are negatively correlated and $1-X_1,\ldots, 1-X_n$ are negatively correlated.
\end{lemma}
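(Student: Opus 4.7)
The plan is to handle the two negative correlation claims separately, because the first one is essentially a triviality while the second requires a short combinatorial computation plus a classical elementary inequality.

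For the first claim, I would simply note that if $|S|=1$ then negative correlation is an equality, while if $|S|\ge 2$ we can pick any two distinct $i,j\in S$ and use that
\[
\Pr\bigl(X_k=1\text{ for all }k\in S\bigr)\le\Pr(X_i=X_j=1)=0,
\]
so the inequality $\Pr(X_k=1,\forall k\in S)\le\prod_{k\in S}\Pr(X_k=1)$ is trivial.

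For the second claim, the key observation is that the hypothesis $\Pr(X_i=X_j=1)=0$ for $i\ne j$ means the events $\{X_i=1\}_{i\in S}$ are pairwise disjoint (up to null sets), so by finite additivity
\[
\Pr\bigl(X_i=0\text{ for all }i\in S\bigr)=1-\Pr\bigl(\exists i\in S:X_i=1\bigr)=1-\sum_{i\in S}\Pr(X_i=1).
\]
Writing $p_i=\Pr(X_i=1)$, it then suffices to verify the Weierstrass product inequality
\[
1-\sum_{i\in S}p_i\;\le\;\prod_{i\in S}(1-p_i),
\]
which I would prove by a one-line induction on $|S|$: assuming it for $S\setminus\{j\}$ and multiplying by $(1-p_j)\ge 0$, the extra term $p_j\sum_{i\in S\setminus\{j\}}p_i$ that appears is non-negative. (If $\sum p_i>1$ the left-hand side is negative and the inequality is automatic.) Combining, we get $\Pr(1-X_i=1,\forall i\in S)\le\prod_{i\in S}\Pr(1-X_i=1)$, as required.

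There is no real obstacle here; the only thing to be slightly careful about is invoking the disjointness correctly so that the probability of the union equals the sum, and then applying the elementary inequality in the right direction.
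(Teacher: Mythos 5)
Your proposal is correct and follows essentially the same route as the paper: both reduce the second claim to the exact identity $\Pr(X_i=0\ \forall i\in S)=1-\sum_{i\in S}\Pr(X_i=1)$ (you via disjointness of the events $\{X_i=1\}$, the paper via expanding $\EE\bigl[\prod_{i\in S}(1-X_i)\bigr]$ and killing the higher-order terms), and then both invoke the same Bernoulli/Weierstrass product inequality by induction. No issues.
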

\begin{proof}
	It is immediate that $X_1,\ldots,X_n$ are negatively correlated, so we only show that $1-X_1,\ldots, 1-X_n$ are negatively correlated.
	Let $S$ be a subset of $\{1, \dotsc, n \}$. Then for any subset $T\subseteq S$ of size at least two, $\EE\big( \prod_{i \in T } X_i \big)=\Pr\big( X_i=1, \forall i \in T\big)=0$. Therefore
	\begin{align*}
		\Pr \big(1-X_i=1, \forall i \in S\big) & =\EE\big[\prod_{i \in S} (1-X_i)\big]
		\\ &= 1- \sum_{i\in S} \EE X_i + \sum_{i <j \in S} \EE\big[X_iX_j\big] - \ldots + (-1)^{|S|} \cdot\EE \big[ \prod_{i\in S} X_i \big] \\&= 1- \sum_{i\in S} \EE X_i.
	\end{align*}
	As $\EE\big(X_i) \in [0,1]$ for all $i$, the right-hand side is a lower bound on $\prod_{i \in S}\big( 1-\EE X_i\big)$, which is easily proved by induction as in the simpler case of Bernoulli's inequality.
	We conclude that $1-X_1,\ldots, 1-X_n$ are negatively correlated because $1-\EE X_i = \Pr \big( 1-X_i=1\big)$.
\end{proof}

We will use the following symmetric form of the Lov\'asz local lemma due to Shearer.
\begin{theorem}[\cite{ErLo75,She85}]
	Consider a set $\mathcal{E}$ of (bad) events such that for each $A\in \mathcal{E}$
	\begin{enumerate}
		\item $\Pr(A) \le p < 1$, and
		\item $A$ is mutually independent of a set of all but at most $d$ of the other events.
	\end{enumerate}
	If $epd\le1$, then with positive probability none of the events in $\mathcal{E}$ occur.
	%	Consider a set $\mathcal{E}=\{A_1,\dots,A_n\}$ of (bad) events such that
	%	each $A_i$ is mutually independent of $\mathcal{E}-(\mathcal{D}_i\cup A_i)$, for some $\mathcal{D}_i \subseteq \mathcal{E}$.
	%	If we have real numbers $x_1,\dots,x_n \in[0,1)$ such that for each $i$
	%	\[
	%	\Pr(A_i) \le x_i \prod_{A_j\in \mathcal{D}_i} (1-x_j),
	%	\]
	%	then the probability that no event in $\mathcal{E}$ occurs is at least $\prod_{i=1}^n (1-x_i)>0$.
\end{theorem}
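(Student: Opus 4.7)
The plan is to follow the classical inductive argument of Erd\H{o}s and Lov\'asz. As a stepping stone, I would prove the more general \emph{asymmetric} form: if weights $x_A\in[0,1)$ can be assigned to each $A\in\mathcal{E}$ such that $\Pr(A)\le x_A\prod_{B\in N(A)}(1-x_B)$, where $N(A)\subseteq\mathcal{E}\setminus\{A\}$ is a set of size at most $d$ that contains every event on which $A$ is not mutually independent, then
\[\Pr\Bigl(\,\bigcap_{A\in\mathcal{E}}\bar A\,\Bigr)\;\ge\;\prod_{A\in\mathcal{E}}(1-x_A)\;>\;0.\]

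The heart of the argument is the following inductive claim: for every $A\in\mathcal{E}$ and every finite $S\subseteq\mathcal{E}\setminus\{A\}$,
\[\Pr\Bigl(A\,\Bigm|\,\bigcap_{B\in S}\bar B\Bigr)\;\le\;x_A.\]
I would prove this by strong induction on $|S|$. The base case $S=\emptyset$ follows directly from the hypothesis. For the inductive step, partition $S=S_1\cup S_2$ with $S_1:=S\cap N(A)$ and $S_2:=S\setminus N(A)$, and write the conditional probability as the ratio
\[\frac{\Pr\bigl(A\cap\bigcap_{B\in S_1}\bar B\,\bigm|\,\bigcap_{B\in S_2}\bar B\bigr)}{\Pr\bigl(\bigcap_{B\in S_1}\bar B\,\bigm|\,\bigcap_{B\in S_2}\bar B\bigr)}.\]
Since $A$ is mutually independent of $\{\bar B:B\in S_2\}$, the numerator is at most $\Pr(A)\le x_A\prod_{B\in N(A)}(1-x_B)$. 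Expanding the denominator via the chain rule produces factors of the form $\Pr(\bar B\mid\cdots)=1-\Pr(B\mid\cdots)$, each of which, by the inductive hypothesis applied to strictly smaller conditioning sets, is at least $1-x_B$. Hence the denominator is at least $\prod_{B\in S_1}(1-x_B)$, and the ratio is bounded above by $x_A\prod_{B\in N(A)\setminus S_1}(1-x_B)\le x_A$. Applying this inductive claim iteratively via the chain rule to $\Pr(\bigcap_{A\in\mathcal{E}}\bar A)$ delivers the advertised product, which is strictly positive.

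To recover the symmetric form stated in the theorem, I would take $x_A:=1/(d+1)$ uniformly for every $A\in\mathcal{E}$. The elementary estimate $(1-1/(d+1))^d\ge 1/e$ reduces the asymmetric hypothesis to the condition $p\le 1/(e(d+1))$, and a small adjustment of the weights (for instance, working with $x_A=ep$ throughout) shows that the condition $epd\le1$ stated in the theorem suffices equally. The only real obstacle in this plan is the bookkeeping of conditioning in the inductive step: one must ensure that the split $S=S_1\cup S_2$ genuinely preserves the mutual independence of $A$ from $\{\bar B:B\in S_2\}$ in the numerator, while the inductive hypothesis is legitimately applied to strictly smaller conditioning sets in the chain-rule expansion of the denominator. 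Once the split is fixed, the remaining manipulations are entirely routine.
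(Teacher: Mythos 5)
The paper does not prove this statement itself (it is quoted from \cite{ErLo75,She85}), so there is no internal proof to compare against; your inductive argument is the standard Erd\H{o}s--Lov\'asz proof of the asymmetric local lemma and, as far as it goes, it is correct: the claim $\Pr(A\mid\bigcap_{B\in S}\bar B)\le x_A$, the split $S=S_1\cup S_2$, the use of mutual independence in the numerator, and the chain-rule bound on the denominator are all fine (modulo the routine check that the conditioning events have positive probability, which follows inductively).

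The genuine gap is in the last step, where you pass to the symmetric constant $epd\le 1$. With degree bound $d$, the best the asymmetric lemma can deliver is the condition $p\le\max_{x\in[0,1]}x(1-x)^d=\frac{d^d}{(d+1)^{d+1}}$ (attained at $x=1/(d+1)$), and this quantity is \emph{strictly smaller} than $\frac{1}{ed}$ for every $d\ge 1$, since $(1+1/d)^{d+1}>e$. Hence no choice of weights --- in particular not $x_A=ep$ --- can recover $epd\le 1$ from the asymmetric form. Concretely, for $d=2$ and $p=\frac{1}{2e}\approx 0.184$ (so $epd=1$) you would need $p\le x(1-x)^2\le\frac{4}{27}\approx 0.148$, which fails; with $x=ep=\tfrac12$ the required inequality reads $\frac{1}{2e}\le\frac18$, also false. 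What your argument honestly yields is the slightly weaker symmetric form $ep(d+1)\le 1$. The constant $epd\le1$ in the statement is precisely why Shearer is cited alongside Erd\H{o}s--Lov\'asz: Shearer determined the optimal symmetric threshold $p\le\frac{(d-1)^{d-1}}{d^d}$ for $d\ge2$, which does exceed $\frac{1}{ed}$ because $(1-1/d)^{d-1}>e^{-1}$, and his proof is a genuinely different and more delicate argument (via the independence polynomial of the dependency graph), not a reweighting of the inductive proof. For the application in this paper the distinction is immaterial (the inequality used in Section~\ref{sec:bip} is strict and has plenty of slack), but as a proof of the theorem as stated, your final reduction does not close.
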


%\begin{hall}
%\end{hall}
%
%\begin{chernoff}
%\end{chernoff}
%
%\begin{lll}
%\end{lll}

\section{Order}\label{sec:order}

In this section we prove Theorem~\ref{thm:n}.
We first prove a lemma that implies $\chi_\ell^\star(K_n)=n$.

\begin{lemma}\label{lem:Kn}
	Given $1\le k \le n$, if $L$ is a $k$-list-assignment of $K_n$ such that every colour belongs to at most $k$ lists, then there is a proper $L$-packing.
\end{lemma}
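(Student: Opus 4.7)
The plan is to reinterpret the desired packing as a proper edge-colouring of an auxiliary bipartite graph and then invoke König's edge-colouring theorem. Form the bipartite graph $H$ with bipartition $V(K_n) \sqcup C$, where $C = \bigcup_{v} L(v)$, placing an edge between $v$ and $c$ exactly when $c \in L(v)$. Every $v \in V(K_n)$ has degree exactly $k$ in $H$ since $|L(v)| = k$, and every $c \in C$ has degree at most $k$ by hypothesis, so $\Delta(H) = k$.

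By König's edge-colouring theorem, $E(H)$ partitions into $k$ matchings $M_1, \dots, M_k$. Since each vertex of $V(K_n)$ has degree exactly $k$ in $H$, every $M_i$ covers all of $V(K_n)$, so I can define $c_i \colon V(K_n) \to C$ by letting $c_i(v)$ be the unique colour with $(v, c_i(v)) \in M_i$. Because $M_i$ is a matching, $c_i$ is injective on $V(K_n)$ and hence is a proper colouring of $K_n$. Because the $k$ edges at $v$ taken from $M_1, \dots, M_k$ are precisely the $k$ edges of $H$ incident to $v$, the values $c_1(v), \dots, c_k(v)$ together enumerate $L(v)$ without repetition, which is the required packing property.

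There is essentially no obstacle; König does the heavy lifting. If one prefers to rely only on Hall's marriage theorem (already cited in the paper), one may pad $H$ by adding dummy vertices and edges on the two sides to obtain a $k$-regular bipartite supergraph. Hall's condition is immediate in any regular bipartite graph, so one extracts a perfect matching, deletes it, and iterates; the $k$ resulting matchings, restricted back to $H$, play the role of $M_1, \dots, M_k$ above.
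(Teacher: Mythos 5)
Your proof is correct, and it takes a genuinely different --- and considerably shorter --- route than the paper's. The paper argues by induction on $k$: it peels off one proper $L$-colouring at a time, where the peeled-off colouring must use every \emph{rich} colour (one lying on exactly $k$ lists) so that the residual $(k-1)$-list-assignment again satisfies the hypothesis; arranging this takes two separate applications of Hall's theorem plus a delicate iterative recolouring step. You instead observe that, because $K_n$ is complete, a proper $L$-colouring is exactly an injective choice function, i.e.\ a matching saturating $V(K_n)$ in the vertex--colour incidence graph $H$, and a proper $L$-packing of size $k$ is exactly a partition of $E(H)$ into $k$ matchings. Since $\Delta(H)=k$ (degree exactly $k$ on the vertex side, at most $k$ on the colour side by hypothesis), K\H{o}nig's edge-colouring theorem gives the partition immediately, and the degree count forces each matching to contain exactly one edge at each $v$, so each class is a full injective colouring and the $k$ classes enumerate each list. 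This is cleaner, and in fact never uses the hypothesis $k\le n$. The only thing the paper's argument buys is that it stays within Hall's theorem as quoted in its preliminaries; your fallback of padding $H$ to a $k$-regular bipartite (multi)graph and repeatedly extracting perfect matchings recovers that economy, so nothing essential is lost. The one point to flag explicitly --- which you implicitly use and which is where the argument would break for non-complete graphs --- is that properness on $K_n$ coincides with injectivity, which is precisely what makes ``matching on the colour side'' the right notion.
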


\begin{proof}[Proof of Lemma~\ref{lem:Kn}]
	We proceed by induction on $k$. The statement is trivially true in the base case $k=1$.
	Assume $k>1$ and that the statement holds with $k$ replaced by $k-1$.

	Let $V$ be the vertex set of $K_n$.
	Let us call a colour \emph{rich} if it belongs to exactly $k$ lists, and write $R$ for the set of rich colours.
	If there is a proper $L$-colouring $c$ of $K_n$ that uses all the rich colours then we are done, as then the $(k-1)$-fold list-cover $L'$ such that $L'(v) = L(v)\setminus\{c(v)\}$ for all $v\in V$ has the property that every colour is used on at most $k-1$ lists.
	By induction, there is a proper $L'$-packing of $K_n$, which we may combine with $c$ to form a proper $L$-packing.
	So it suffices to construct the desired colouring $c$.

	We start by showing first that some proper $L$-colouring $c'$ of $K_n$ exists.
	Such a colouring is precisely a system of distinct representatives for the family $\mathcal{F} = \left\{ L(v) : v\in V\right\}$, and hence it suffices to show that the marriage condition holds. This entails showing that $|V'| \le \left|\bigcup_{v\in V'} L(v) \right|$ for every $V'\subseteq V$. This holds by a simple counting argument: every list has size $k$, and so counting colours with multiplicity on lists of $v\in V'$ gives precisely $k|V'|$. But every colour appears on at most $k$ lists by assumption, so appears with multiplicity at most $k$, and hence the total number of colours counted without multiplicity is at least $|V'|$, as required to guarantee the desired $c'$ by Hall's theorem.

	If $c'$ uses all the rich colours already then we are done, but if not we swap colours in $c'$ to ensure this property. First, note that there is a partial proper $L$-colouring which uses every rich colour by the following argument.
	For a rich colour $r\in R$, let $V_r = \{v\in V: r\in L(v)\}$, and let $\mathcal{F}' = \{V_r : r\in R\}$.
	Now every set in $\mathcal{F}'$ has size $k$ by the definition of $R$, and each $L(v)$ has size $k$. The marriage condition holds for $\mathcal{F}'$ by the same counting argument as above: given a subset $R'\subseteq R$, with multiplicity we count at least $k|R'|$ vertices whose lists contain colours in $R'$, but the multiplicities are at most the list size $k$.
	Hall's theorem thus guarantees that there is an injective function $f : R\to V$ such that $r\in L(f(r))$ for each $r\in R$.

The desired colouring $c$ is now obtained by doing the following. Initialise $c=c'$. Then set $c(f(r))=r$ for every unused rich colour $r\in R\setminus c(V)$. In doing so we removed the original colour from $f(r)$, so this modification may lead to a new (and disjoint) set of unused rich colours. Therefore we repeat the process as long as necessary. That is, while there is a rich colour $r$ unused by $c$, we modify $c$ by setting $c(f(r)) = r$.	When this process terminates, $c$ is a proper $L$-colouring of $K_n$ which uses all the rich colours, as desired.
\end{proof}

For our next result, recall that $K_n^-$ is the graph formed from $K_n$ by removing an arbitrary edge.

\begin{proof}[Proof of Theorem~\ref{thm:n}]
	First observe that $\chi^\star_\ell$ is monotone under edge-deletion. That is, if $G'$ is a subgraph of $G$, then $\chi^\star_\ell(G')\le\chi^\star_\ell(G)$. Thus, for the bound it suffices to prove that $\chi^\star_\ell(K_n)\le n$. In turn, this follows from Lemma~\ref{lem:Kn} with $k=n$. For the equality statement, it suffices to show that $\chi^\star_\ell(K_n^-) \le n -1$, where $n\ge 3$.

	Let $V$ denote the vertex set of $K_n^-$ and let $u_1$ and $u_2$ denote the two nonadjacent vertices in $V$.
	Let $L$ be an $(n-1)$-list-assignment of $K_n^-$.
	If $L(u_1)=L(u_2)$, then we can treat $u_1$ and $u_2$ as the same vertex (since they may take the same colour), and conclude by Lemma~\ref{lem:Kn} for $K_{n-1}$.
	If there is no colour that belongs to all $n$ lists, then we are also done by Lemma~\ref{lem:Kn}.

	Let there be exactly $i$ colours belonging to all $n$ lists. Without loss of generality, we may assume these are $[i]$, i.e.~$[i]\subseteq L(v)$ for all $v \in V$.
	We seek to define an $i$-list-assignment $L_1$ of $K_n^-$ satisfying $L_1(v)\subseteq L(v)$ for all $v\in V$ such that the following properties hold:
	%	\begin{itemize}
	%	\item 
	$L_1(u_1)=L_1(u_2)=[i]$;
	%	\item 
	every colour in $[i]$ belongs to at most $i+1$ lists $L_1(v)$ where $v\in V$;
	%	\item 
	every colour not in $[i]$ belongs to at most $i$ lists $L_1(v)$ where $v\in V$; and
	%	\item 
	the $(n-i-1)$-list-assignment $L\setminus L_1$ of $K_n^-$ satisfies the hypothesis of Lemma~\ref{lem:Kn} for $k=n-i-1$.
	%	\end{itemize}
	If we can do this, then by treating $u_1$ and $u_2$ as the same vertex, we are guaranteed an $L_1$-packing of $K_n^-$ by Lemma~\ref{lem:Kn}.
	We are also guaranteed a disjoint $L\setminus L_1$-packing of $K_n^-$ by Lemma~\ref{lem:Kn}.
	Then these packings combine to form an $L$-packing of $K_n^-$, which completes the proof.

	We prove that such an $L_1$ exists with two claims.
	First, we explain some additional terminology.
	We call a colour $j \not\in [i]$ \emph{rich} if it belongs to more than $n-i-1$ lists $L(v)$ where $v \in V$.
	We write $\alpha(j)$ for the number of lists to which a colour $j$ belongs, and we call $s(j)=\max\{0,\alpha(j)-(n-i-1)\}$ the surplus of $j$. We write $R$ for the set of rich colours, which by definition are the colours with positive surplus.

	\begin{claim}\label{clm:K_n^-Part1}
		We have $\sum_{ j \in R} s(j) \le i(n-i-1)$.
	\end{claim}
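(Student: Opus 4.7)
The plan is to bound $\sum_{j\in R}s(j)$ by combining two complementary simple counts on the quantities $\alpha(j)$ for $j\in R$. First I would record the basic range for rich colours: since $j\notin[i]$, the colour $j$ must be missing from at least one list, so $\alpha(j)\le n-1$; and by definition of $R$, $\alpha(j)\ge n-i$. Thus for every $j\in R$ we have $1\le s(j)\le i$, where $s(j)=\alpha(j)-(n-i-1)$.

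Next I would set up two upper bounds on the sum. The first is trivial from the pointwise bound on $s(j)$:
\[
\sum_{j\in R}s(j)\;\le\;i\,|R|. \tag{A}
\]
The second uses that each list has size $n-1$ and contains all of $[i]$, so
\[
\sum_{j\notin[i]}\alpha(j)\;=\;\sum_{v\in V}|L(v)\setminus[i]|\;=\;n(n-i-1),
\]
whence $\sum_{j\in R}\alpha(j)\le n(n-i-1)$ and therefore
\[
\sum_{j\in R}s(j)\;=\;\sum_{j\in R}\alpha(j)-(n-i-1)|R|\;\le\;(n-|R|)(n-i-1). \tag{B}
\]

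To finish, I would split on $|R|$. Bound (A) yields the desired $\sum_{j\in R}s(j)\le i(n-i-1)$ whenever $|R|\le n-i-1$; bound (B) yields the same inequality whenever $|R|\ge n-i$. Since $|R|$ is a nonnegative integer and $n-i-1$, $n-i$ are consecutive, one of the two cases must hold, completing the proof.

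There is no real obstacle here; the only small insight is that $s(j)$, being a truncation of $\alpha(j)-(n-i-1)$, admits both a pointwise upper bound (giving a linear-in-$|R|$ estimate) and a global upper bound via the total count $n(n-i-1)$ (giving a linear-in-$(n-|R|)$ estimate), and these two estimates happen to meet exactly at the threshold between $|R|\le n-i-1$ and $|R|\ge n-i$.
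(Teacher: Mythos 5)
Your proof is correct and follows essentially the same route as the paper's: the pointwise bound $s(j)\le i$ handles the case $|R|\le n-i-1$, and the global count $\sum_{j\notin[i]}\alpha(j)=n(n-i-1)$ handles the case $|R|\ge n-i$, exactly as in the paper. The only cosmetic difference is that you phrase the second estimate as $(n-|R|)(n-i-1)$ before specialising, which the paper does inline.
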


	\begin{claimproof}
		Note that for $j\in R$ we have $s(j) \le n-1-(n-i-1) =i$, as by assumption every $j\notin[i]$ belongs to fewer than $n$ lists.
		So if $\lvert R \rvert \le n-i-1$ the claim is immediate.

		In any case, a simple colour-counting argument gives
		\[\sum_{ j \in R } \alpha(j) \le \sum_{ j \not \in [i] } \alpha(j) = n(n-1)-\sum_{ j \in [i] } \alpha(j)=n(n-i-1),\]
		because $[i]\subset L(v)$ for all $v\in V$.
		So in the case $\lvert R \rvert \ge n-i$, we have
		\[\sum_{ j \in R} s(j) =\sum_{ j \in R} \alpha(j) - \lvert R \rvert (n-i-1) \le n(n-i-1)-(n-i)(n-i-1)=i(n-i-1),\]
		completing the proof of the claim.
	\end{claimproof}

	Note that in order for $L\setminus L_1$ to fulfil the condition of  Lemma~\ref{lem:Kn} for $k=n-i-1$, we need that the colour $j$ belongs to at least $s(j)$ lists of $L_1$ for every rich colour $j$. Thus the following claim proves the existence of the desired $L_1$.

	\begin{claim}\label{clm:K_n^-Part3}
		For every $v \in V$, we can select a subset $L_1(v) \subseteq L(v)$ of size $i$ in such a way that $L_1(u_1)=L_1(u_2)=[i]$, every colour in $[i]$ belongs to $i+1$ lists, every colour not in $[i]$ belongs to at most $i$ lists, and every $j\in R$ belongs to at least $s(j)$ lists $L_1(v)$ where $v\in V$.
	\end{claim}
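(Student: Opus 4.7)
The plan is to set $L_1(u_1) = L_1(u_2) = [i]$ and construct $L_1(v)$ for the remaining vertices $v \in V' := V \setminus \{u_1, u_2\}$ via a bipartite $b$-matching form of Hall's marriage theorem. Since $L_1(u_1), L_1(u_2) = [i]$ already contribute $2$ to the occurrence count of each $c \in [i]$ and $0$ to every other colour, the task reduces to finding a spanning subgraph $F$ of the bipartite graph $B$ between $V'$ and the colours---with $vc \in E(B)$ iff $c \in L(v)$---satisfying $\deg_F(v) = i$ for each $v \in V'$, $\deg_F(c) = i-1$ for each $c \in [i]$, $\deg_F(j) \ge s(j)$ for each rich $j$, and $\deg_F(j) \le i$ for each $j \notin [i]$.

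To establish existence of such $F$, I would introduce demand tokens: $i-1$ tokens of each $c \in [i]$, placeable at any vertex in $V'$ (since $c \in L(v)$), and $s(j)$ tokens of each rich $j$, placeable only at vertices in $V'_j := \{v \in V' : j \in L(v)\}$. Each vertex has capacity $i$, and by Claim~\ref{clm:K_n^-Part1} the total number of tokens is $i(i-1) + \sum_{j \in R} s(j) \le i(n-2)$, matching total capacity. By a $b$-matching/Hall argument, a valid placement exists iff $|\mathcal{T}| \le i |N(\mathcal{T})|$ for every subset $\mathcal{T}$ of tokens, where $N(\mathcal{T}) \subseteq V'$ is the union of allowed destinations. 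If $\mathcal{T}$ contains any $[i]$-token then $N(\mathcal{T}) = V'$ and the bound is immediate. Otherwise $\mathcal{T}$ has labels in some $J \subseteq R$, and one needs $\sum_{j \in J} s(j) \le i |U_J|$ with $U_J := \bigcup_{j\in J} V'_j$.

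I would verify this last inequality by double-counting $\sum_{j \in J}\alpha(j) = \sum_v |L(v) \cap J|$: each $v \in U_J \cup \{u_1, u_2\}$ contributes at most $n-1-i$ (since $J \cap [i]=\emptyset$ and $|L(v)| = n-1$), while $v \in V' \setminus U_J$ contributes $0$. Combined with $s(j) = \alpha(j) - (n-i-1)$ for rich $j$ and a short case check for small $|J|$ (where $s(j)\le i$ and $|V'_j|\ge 1$ suffice), this yields the bound---crucially using $L(u_1) \ne L(u_2)$ to rule out the extremal configurations, of the type $L(u_1)=L(u_2)=[i]\cup J$, that would otherwise force every $j \in J$ into both $L(u_1)$ and $L(u_2)$ and exceed the bound.

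Once the tokens are validly placed, each $v \in V'$ has received a set $T(v)$ of at most $i$ distinct colours, which I would extend to a full $L_1(v) \subseteq L(v)$ of size $i$ by adding further non-$[i]$ colours from $L(v) \setminus T(v)$, subject to the cap of $i$ uses per colour---a straightforward second Hall application given the remaining slack. The main obstacle is the rich-subset case of the Hall verification: the double-count is tight in small-$n$ regimes (notably when $i$ is close to $n$), and its verification is precisely where the hypothesis $L(u_1)\ne L(u_2)$ enters the argument, excluding the extremal concentrations of rich colours in $L(u_1)\cap L(u_2)$ described above.
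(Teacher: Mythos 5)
Your reduction of the claim to a degree-constrained bipartite subgraph $F$ (with $\deg_F(v)=i$ on $V'$, $\deg_F(c)=i-1$ for $c\in[i]$, $\deg_F(j)\ge s(j)$ for rich $j$, and $\deg_F(j)\le i$ otherwise) is a reasonable reformulation, and it is a genuinely different route from the paper, which instead builds the selection by an iterative select/deselect/fix procedure split into the cases $i\ge n-i-1$ and $i<n-i-1$. However, there are two genuine gaps. First, the feasibility criterion you use for the token placement is wrong: the condition $|\mathcal{T}|\le i\,|N(\mathcal{T})|$ is Hall's condition for matching tokens into $i$ slots per vertex, but such a matching may send two tokens of the \emph{same} colour to two slots of the \emph{same} vertex, which does not produce a subgraph $F$ (equivalently, $j$ would then lie in fewer than $s(j)$ of the sets $L_1(v)$). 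The correct max-flow/min-cut condition for the placement is
\[
\sum_{j\in J}s(j)\;\le\;\sum_{v\in V'}\min\bigl(i,\,|L(v)\cap J|\bigr)\qquad\text{for all }J\subseteq R,
\]
which is strictly stronger than $\sum_{j\in J}s(j)\le i\,|U_J|$ whenever some vertex carries fewer than $i$ colours of $J$; your double count of $\sum_{j\in J}\alpha(j)$ only bounds the weaker quantity, so feasibility is not established. (Even the weaker inequality is only sketched; e.g.\ your bound gives $\sum_{j\in J}s(j)\le(|U_J|+2-|J|)(n-i-1)$, which is not obviously at most $i|U_J|$ when $n-i-1>i$.)

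Second, the closing step ``extend each $L_1(v)$ to size $i$ by adding further non-$[i]$ colours'' breaks down in the regime $i\ge n-i$. After the token placement, each colour of $[i]$ is already at its exact quota $i-1$ on $V'$, so a vertex $v$ that received $t_v$ tokens, of which $a_v$ are $[i]$-tokens, must complete its list using colours of $L(v)\setminus[i]$, of which there are only $n-1-i$; this forces $a_v\ge 2i-(n-1)$, a lower bound your placement does not guarantee (a vertex may receive no $[i]$-tokens at all when $\sum_{j\in R}s(j)<i(n-i-1)$). This is exactly the difficulty the paper's proof sidesteps by reversing the order: it first distributes exactly $i(n-i-1)$ selections of non-$[i]$ colours (at most $i$ per vertex and per colour, at least $s(j)$ per rich $j$, using Claim~\ref{clm:K_n^-Part1} and, via $L(u_1)\ne L(u_2)$, an SDR for the problematic rich colours concentrated on $L(u_1)\cap L(u_2)$), and only afterwards pads with colours of $[i]$, which is always possible because $[i]\subseteq L(v)$ for every $v$. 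To salvage your approach you would need both the correct flow condition above and a mechanism forcing enough $[i]$-tokens onto every vertex when $i$ is large.
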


	\begin{claimproof}
		The main part of this proof is to make a partial selection for $L_1$ from the colours not in $[i]$. In particular, we will make a total of $i(n-i-1)$ selections of colours not in $[i]$ from the lists $L(v)$ where $v\in V\setminus \{u_1,u_2\}$, such that no colour is selected more than $i$ times, no vertex has more than $i$ selected colours, and every $j\in R$ is selected at least $s(j)$ times.
		Once we have done this, since $[i]\subseteq L(v)$ for all $v\in V$, the remaining selection for $L_1$ of the colours in $[i]$ is straightforward: we take $L(u_1)=L(u_2)=[i]$, and for the remaining vertices we greedily/arbitrarily add colours of $[i]$ to the selection until $|L_1(v)|=i$ for all $v\in V$ and every colour of $[i]$ has been added precisely $i-1$ times. The resulting selection satisfies the desired conclusion.
		This means that it suffices to make such a selection for $L_1$ of the colours not in $[i]$. For this, we consider two cases.

		First, suppose that $i \ge n-i-1$. We make a somewhat arbitrary partial selection and then modify it.
		We start by taking an arbitary set $A$ of $i$ vertices in $V \setminus \{u_1,u_2\}$, letting $L_1(v)=L(v) \setminus [i]$ for $v\in A$, and letting $L_1(v)=\emptyset$ for all $v\in V\setminus \{u_1,u_2\} \setminus A$.
		Note that $i(n-i-1)$ selections have been made, no colour has been selected more than $i$ times, and no vertex has more than $i$ selected colours thus far.
		While there is a colour $j\in R$ that belongs to fewer than $s(j)$ lists of $L_1$, we can select $j$ for some $v\in V\setminus \{u_1,u_2\} \setminus A$ and deselect for some $v\in A$ some \emph{unnecessary} colour, that is, some selected colour $j'$ such that either $j'\in R$ and $j'$ already appears more than $s(j')$ times or $j'\notin R$.		
Note that at each step of this process, the total number of selections is preserved, the selected colour $j$ is still selected at most $s(j)\leq i$ times in total, and still each vertex has at most $|L(v)\setminus [i]|= n-1-i \leq i$ selected colours.		
		Moreover, Claim~\ref{clm:K_n^-Part1} ensures that this process will terminate, at which point we have the desired partial selection for $L_1$.

		Second, suppose that $i< n-i-1$. Here we make a more careful selection before modifying it. We start by taking some set $A$ of $n-i-1$ vertices in $V \setminus \{u_1,u_2\}$ and consider the $(n-i-1)$-list-assignment $L'$ of the complete graph on vertex set $A$ that is defined by $L'(v)=L(v)\setminus [i]$ for all $v\in A$.  By Lemma~\ref{lem:Kn}, there is a proper $L'$-packing of the complete graph on $A$. Define a partial selection for $L_1$ by arbitrarily taking $i$ of the proper $L'$-colourings from this $L'$-packing and taking their union. Again $i(n-i-1)$ colour selections have been made, no vertex has more than $i$ selected colours and, since we have taken $i$ disjoint proper $L'$-colourings of a complete graph, no colour has been selected more than $i$ times so far.

In what follows, we will modify the partial selection $L_1$ iteratively, by selecting and deselecting some vertices, while at each step maintaining the above three constraints on the number of selections. The goal is of course to make sure that $L_1$ in the end also satisfies the fourth constraint, that every $j\in R$ is selected at least $s(j)$ times. 

At each step, we distinguish between colours in $L_1$ that are merely selected for their vertex, and colours that are also \emph{fixed}. If a colour has been fixed for a vertex, we do not allow it to be deselected from that vertex later in the process. 
Throughout the process, we will only fix rich colours, and each rich colour $j\in R$ will be fixed at most $s(j)$ times, thus ensuring that the total number of times we fix a colour does not exceed $\sum_{j \in R} s(j)$. 

		For the first round of modifications, we first focus our attention on the set $R_1$ of rich colours that belong to at most $n-i-2$ lists $L(v)$ where $v \in V \setminus \{u_1,u_2\}$. A colour $j$ only belongs to $R_1$ when $\alpha(j)=n-i$ (so $s(j)=1$) and $j \in L(u_1) \cap L(u_2)$.
		Since $L(u_1) \ne L(u_2)$, $|R_1|\le n-i-2$.
		But since all of these colours in fact belong to exactly $n-i-2$ lists $L(v)$ for $v \in V \setminus \{u_1,u_2\}$, there exists some injective function $w: R_1\to V \setminus \{u_1,u_2\}$ such that $L(w(j)) \ni j$ for all $j\in R_1$. We now use the function $w$ to iteratively modify $L_1$ so that every colour in $R_1$ is selected (at least) once.		
	More fully, let us consider every $j\in R_1$ and do the following in order, where we write $j\in L_1$ if the colour $j$ has been selected under (the current) $L_1$ for at least one vertex. 
\begin{enumerate}
\item For all $j\in R_1$ such that $w(j) \in A$ and $j \in L_1(w(j))$, we fix colour $j$ for $L_1(w(j))$. 
\item Iteratively, for all $j \in R_1$ such that $w(j) \in A$, but $j \notin L_1$, we select and fix $j$ for $w(j)$, and deselect an arbitrary colour from $L_1(w(j))$.
\item For each remaining $j \in R_1$ such that $w(j)\in A$ we have that $j \in L_1$ but not $j\in L_1(w(j))$. We fix $j$ for one arbitrary selection under $L_1$.
\item Each remaining $j\in R_1$ satisfies $w(j) \notin A$. We (select and) fix colour $j$ for vertex $w(j)$. If as a result of this, there are now more than $i$ vertices for which colour $j$ has been selected, we deselect $j$ for one of them (for which $j$ is not fixed). Otherwise we deselect an arbitrary nonfixed colour.
\end{enumerate}
 After this first round of modifications, every colour in $R_1$ has exactly one fixed selection under $L_1$.
Note that the four modification steps above are rather delicate because we need to preserve the total number of selections, and the properties that no vertex has more than $i$ selected colours and that no colour has been selected more than $i$ times.

		Now we consider the set $R\setminus R_1$ of rich colours that belong to at least $n-i-1$ lists $L(v)$ where $v\in V\setminus \{u_1,u_2\}$. We repeat the following until we can no longer do so.
		Take some $j\in R\setminus R_1$ that currently has been selected by $L_1$ for fewer than $s(j)$ vertices. Let us \emph{fix} all of its current selections. Let $w'(j)\in V\setminus \{u_1,u_2\}$ be some vertex such that $L(w'(j)) \ni j$ and $w'(j)$ does not have $i$ selected colours fixed. The existence of $w'(j)$ is guaranteed by the fact that $j$ belongs to at least $n-i-1$ lists and Claim~\ref{clm:K_n^-Part1}.
		
(Indeed, suppose such $w'(j)$ does not exist. Then there must be at least $n-i-1$ vertices $w\in V\setminus\{u_1,u_2\}$ with $j \in L(w)$ and with at least $i$ colours fixed for $w$. So there are at least $i(n-i-1)$ fixed colours in total. On the other hand, by assumption fewer than $s(j)$ vertices have been fixed for $j$, so in total we have fixed at most $-1+\sum_{h \in R} s(h)$ colours, which is strictly less than $i(n-i-1)$ by Claim~\ref{clm:K_n^-Part1}, a contradiction.)

		As before, we select $j$ for $w'(j)$, \emph{fixing} this choice, and deselect some other (nonfixed) colour, possibly within the list of $w'(j)$ (if we need to ensure that $w'(j)$ has no more than $i$ selected colours). This procedure will terminate due to Claim~\ref{clm:K_n^-Part1}, at which point we will have the desired partial selection for $L_1$.
	\end{claimproof}

	This completes the proof that there exists such an $L_1$, thus guaranteeing the desired $L$-packing.
\end{proof}

\section{Degeneracy}\label{sec:degeneracy}

In this section we give the argument for Theorem~\ref{thm:degeneracycorr}, which also yields Theorem~\ref{thm:degeneracy}.
This greedy approach with Hall's theorem is given by MacKeigan~\cite{Mac21} and discussed by Yuster~\cite{Yus21} as a way of proving that $\chi_c^\star(K_n)\le 2(n-1)$.
Although Yuster proved that the leading constant `$2$' there is not tight, we show with Proposition~\ref{prop:degeneracyDP} that when the method is refined and stated in terms of degeneracy it is indeed tight.
We also give an example showing that the bound in Theorem~\ref{thm:degeneracy} cannot be improved to $\delta^\star(G)+1$.

\begin{proof}[Proof of Theorem~\ref{thm:degeneracycorr}]
	Let $G$ be a graph with degeneracy $\delta^\star(G)=d$, and order the vertices $V(G)$ as $v_1,\dotsc,v_n$ such that no vertex has more than $d$ neighbours preceding it in the order.
	That is, in each of the induced subgraphs $G_i := G[\{v_1,\dots,v_i\}]$, the degree of $v_i$ is at most $d$.
	Let $(L,H)$ be a $k$-fold correspondence-cover of $G$, where $k=2d$.
	We prove by induction and Hall's theorem that $G$ admits a proper $L$-packing. Write $H_i$ and $L_i$ for the correspondence-cover and associated correspondence-assignment induced on $G_i$ by $H$ and $L$, respectively.
	Note that trivially $G_1$ admits a correspondence $L_1$-packing. So for the induction let $i>1$ and assume that $G_{i-1}$ admits a correspondence $L_{i-1}$-packing.
	This packing is equivalent to an ordered list $(c_1, \dotsc, c_k)$ of pairwise-disjoint proper $L_{i-1}$-colourings of $G_{i-1}$.
	We want to extend each of these colourings to the vertex $v_i$ so that they remain disjoint and are proper $L_i$-colourings of $G_i$.
	Since $v_i$ has degree at most $d$ in $G_i$, each colour $x \in L(v_i)$ is a possible valid extension for at least $|L(v_i)|-d=d$ of the colourings $c_1,\dots,c_k$ to the vertex $v_i$.
	For each $x \in L(v_i)$, let $\Xi(x)$ be the set of such colourings (so $|\Xi(x)|\geq d$ for all $x$).
	Conversely, for each $1\leq j \leq k$ there are also at least $d$ colours in $L(v_i)$ that can be used to extend the colouring $c_j$, so $|\Xi^{-1}(c_j)| \geq d$ for all $j$. Consider the family $\mathcal{F} = \{\Xi(x)\,\mid\, x\in L(v_i)\}$. Using the facts that $|\Xi(x)|\ge d$ for all $x$ and $|\Xi^{-1}(c_j)| \geq d$ for all $j$, one can verify that the marriage condition holds. Let $f$ be the injective function guaranteed by Hall's theorem. This gives a one-to-one correspondence between the colourings $c_1,\dots,c_k$ and colours in $L(v_i)$ that may extend them to the vertex $v_i$. We obtain the claimed correspondence $L_i$-packing of $G_i$, as desired. In particular, this holds for $i=n$.
\end{proof}

\begin{proposition}\label{prop:degeneracyDP}
	For every $d\ge 2$, there exists some graph $G$ satisfying $\delta^\star(G)=d$ and $\chi^\star_c(G) = 2d$. In particular, this is true for the complete bipartite graph $K_{d,((2d-1)!)^{d-1}}$.
\end{proposition}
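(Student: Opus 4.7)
The upper bound $\chi^\star_c(K_{d,m})\le 2d$ is immediate from Theorem~\ref{thm:degeneracycorr}, since $K_{d,m}$ has degeneracy $d$ whenever $m\ge d$ (order the small side first). The substance of the proof is therefore to exhibit, for $m=((2d-1)!)^{d-1}$, a $(2d-1)$-fold correspondence cover of $K_{d,m}$ that admits no correspondence packing. I would identify every list with $[2d-1]$, so each cover matching $\pi_{i,b}$ on the edge $a_ib$ becomes a permutation of $[2d-1]$, and encode a putative packing by permutations $\mu_i\in S_{2d-1}$ (recording $c_j(a_i)=\mu_i(j)$) together with permutations $\tau_b\in S_{2d-1}$ satisfying $\tau_b(j)\ne\pi_{i,b}(\mu_i(j))$ for all $i,j$.

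The key combinatorial ingredient will be a \emph{bad tuple} of permutations of $[2d-1]$. Starting from a Latin square $L$ of order $d$ whose first row is the identity, for each $i\in[d]$ define $\sigma_i^\star\in S_{2d-1}$ to act as the $i$-th row of $L$ on $[d]$ and as the identity on $\{d+1,\dots,2d-1\}$. Since $L$ is Latin, $\{\sigma_i^\star(j):i\in[d]\}=[d]$ for every $j\in[d]$; consequently no permutation $\tau$ of $[2d-1]$ can satisfy $\tau(j)\ne\sigma_i^\star(j)$ for all $i$ and $j$, as this would require injecting the $d$ rows $[d]$ into the $(d-1)$-element complement $\{d+1,\dots,2d-1\}$.

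To build the cover, I associate to each $\mu=(\mu_1,\dots,\mu_d)\in (S_{2d-1})^d$ the tuple of matchings $\pi_{1,b}=\mathrm{id}$ and $\pi_{i,b}=\mu_1\sigma_i^\star\mu_i^{-1}$ for $i\ge 2$. A short conjugation calculation shows that $\mu$ and $\mu'$ produce the same tuple precisely when $\mu_1^{-1}\mu_1'=\alpha$ and $\mu_i^{-1}\mu_i'=(\sigma_i^\star)^{-1}\alpha\sigma_i^\star$ for some $\alpha\in S_{2d-1}$, so the equivalence classes each have size $(2d-1)!$ and there are exactly $((2d-1)!)^{d-1}=m$ of them. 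Selecting one $b\in B$ per equivalence class and equipping the edges from $A$ to that $b$ with the prescribed matchings produces a valid $(2d-1)$-fold correspondence cover of $K_{d,m}$. To see that no packing survives, observe that for any packing $\mu$, the vertex $b$ assigned to its equivalence class satisfies $\pi_{i,b}\mu_i=\mu_1\sigma_i^\star$ for every $i$; the forbidden set for $\tau_b(j)$ is then $\mu_1\{\sigma_i^\star(j):i\in[d]\}$, which for $j\in[d]$ equals $\mu_1([d])$. Hence $\tau_b$ would have to inject $d$ rows into the $(d-1)$-element set $[2d-1]\setminus\mu_1([d])$, an impossibility. The main obstacle I anticipate is the bookkeeping of the conjugation-style algebra in the equivalence-class count; the Latin-square construction of the bad tuple itself is elementary.
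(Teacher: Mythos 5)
Your proof is correct and follows essentially the same route as the paper's: the paper likewise builds the cover from all $((2d-1)!)^{d-1}$ essentially distinct tuples of matchings (i.e.\ all tuples up to relabelling $L(b)$) and argues that, whatever partial packing is induced on $A$, some $b\in B$ forces $d$ of the colourings to avoid a common $d$-subset of $L(b)$, leaving only $d-1$ available colours. Your Latin-square construction of the bad tuple $\sigma_i^\star$ and the explicit fibre/equivalence-class count of $((2d-1)!)^{d-1}$ simply make precise two steps that the paper's proof only asserts.
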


\begin{proof}
	Let $G$ be $K_{d,((2d-1)!)^{d-1}}$, with parts $A$ and $B$ of size $d$ and $((2d-1)!)^{d-1}$, respectively. This graph has degeneracy $d$, so by Theorem~\ref{thm:degeneracycorr} we need to show that it satisfies $\chi^\star_c(G) \ge 2d$. We form a $(2d-1)$-fold correspondence-cover of $G$ with respect to $L$ by including every possible combination of $d$ matchings running between $d$ copies of $[2d-1]$ and a single copy of $[2d-1]$. Then, whatever the partial $L$-packing  of size $2d-1$ induced on $A$, there is guaranteed to be a vertex $b\in B$ and $d$ of the proper $L$-colourings such that $d$ of the colours of $L(b)$ are each matched to $d$ colours of $\cup_{a\in A} L(a)$ that belong to each of the $d$ proper $L$-colourings. And thus to extend the partial $L$-packing induced on $A$, those $d$ colours of $L(b)$ may only be used to extend $d-1$ of the proper $L$-colourings, which is clearly impossible.
\end{proof}

Having established this tight characterisation of $\chi_c^\star$ in terms of degeneracy, it is natural to wonder what the right bound for list packing could be.
We conclude this section with a modest lower bound on the extremal list packing number in terms of degeneracy.
In light of Proposition~\ref{prop:degeneracyDP}, we note on the other hand that (complete) bipartite graphs cannot certify a better lower bound for $\chi_\ell^\star(G)$, see Lemma~\ref{lem:bipgreedylist} below.

\begin{theorem}\label{thm:degeneracyexample}
	For every $d \ge 2$, there exists a graph $G$ with $\delta^\star(G)=d$ and $\chi_{\ell}^{\star}(G) \ge d+2$.
\end{theorem}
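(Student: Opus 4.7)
The plan is to construct, for each $d \ge 2$, an explicit graph $G_d$ of degeneracy exactly $d$ together with a $(d+1)$-list-assignment $L_d$ admitting no proper $L_d$-packing. I would proceed by first producing a base example for $d=2$, then bootstrapping to all $d \ge 3$ via a clique-padding argument.

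For the base case $d=2$, I would seek a small non-bipartite $2$-degenerate graph $G_2$ with $\chi^\star_\ell(G_2) \ge 4$. Non-bipartiteness is essential here, since Lemma~\ref{lem:bipgreedylist} caps the list packing number of a bipartite $2$-degenerate graph at $3$. A natural candidate is a small graph combining two features: a triangle (or odd-cycle gadget) that rigidly pins the colour triples at its vertices across any $3$-packing, thereby forcing a fixed triple of colours on a distinguished anchor vertex; and a $C_4$-attachment whose $3$-lists encode the classical $C_4$ $2$-list obstruction of the introduction conditional on that rigid triple. Concrete candidates include the house graph (a triangle sharing an edge with a $C_4$), a triangle sharing a single vertex with a pendant $C_4$, or two $C_4$'s glued along an edge together with an extra chord forming a triangle. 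The argument would proceed by enumerating the few $3$-packings of the rigid gadget (up to symmetry) and showing, case by case, that the $C_4$-attachment cannot be extended, by a Hall-type obstruction analogous to the classical $C_4$ $2$-list example.

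For $d \ge 3$, I would extend $G_2$ by attaching a new clique $K_{d-2}$ all of whose vertices are joined to a single anchor vertex $v_* \in V(G_2)$, and extend $L_2$ by $d-2$ fresh colours added to the list of $v_*$ and to each list of the new clique vertices. In any $(d+1)$-packing of $(G_d,L_d)$, the $(d-1)$-clique on $\{v_*\} \cup V(K_{d-2})$ forces the $d-2$ new colours to be entirely consumed on $V(K_{d-2})$ in each of the $d+1$ colourings, so that the restriction of the packing to $G_2$ is a proper $3$-packing with respect to $L_2$, contradicting the base case. Verifying $\delta^\star(G_d) = d$ would proceed by peeling the $C_4$-vertices of $G_2$ first (degree at most $2$), then the vertices of $K_{d-2}$ (each of degree at most $d-1$ at the time of removal), and finally the residue of $G_2$.

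The main obstacle is the base case. The slack afforded by $3$-lists on $2$-degenerate graphs is considerable, and Hall-style arguments in the spirit of the proof of Theorem~\ref{thm:n} quickly rule out many natural candidates (for instance even cycles, books, and $K_{2,n}$), each of which always admits a $3$-packing. A successful design must arrange cascading non-local constraints tight enough that Hall's condition fails globally, and I expect the verification to require a careful (finite but non-trivial) case analysis of the admissible $3$-packings of the inner gadget together with a bookkeeping of which colours are \emph{locked} on the anchor vertex before attempting to extend to the $C_4$-attachment.
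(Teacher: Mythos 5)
There is a genuine gap: your entire construction rests on the base case $d=2$, and you do not actually establish it. You name several candidate graphs (the house graph, a triangle with a pendant $C_4$, etc.) and describe the kind of case analysis that would be needed, but you explicitly defer the verification, acknowledging it as ``the main obstacle.'' Without a concrete $2$-degenerate graph and a concrete $3$-list-assignment shown to admit no $3$-packing, the theorem is not proved for any $d$. For comparison, the paper does not use an induction on $d$ at all: for each $d$ it builds a chain of copies of $K_{d+1}$, each new layer attached to exactly $d$ vertices of the previous one with lists obtained by a single colour ``shift,'' so that the $(d+1)$-packing is forced to be \emph{unique} at every stage and the colour vectors of designated vertices get permuted in a controlled way; after enough shifts one obtains $d$ vertices whose forced colour vectors block every assignment at one final vertex with list $[d+1]$. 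The rigidity is global and explicit, so no case analysis is needed.

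The lifting step from $d=2$ to general $d$ also does not work as written. In $G_d$ every vertex must carry a list of size $d+1$, but you only add the $d-2$ fresh colours to $L(v_*)$ and to the new clique vertices; the remaining vertices of $G_2$ are left with $3$-lists, which is not a $(d+1)$-list-assignment. Once you enlarge their lists as well, the restriction of a $(d+1)$-packing to $G_2$ is no longer a $3$-packing from $3$-lists, and the contradiction with the base case evaporates. Moreover, even on the clique $\{v_*\}\cup V(K_{d-2})$ the claim that the $d-2$ fresh colours are ``entirely consumed'' on $V(K_{d-2})$ in every colouring is not forced: each clique vertex uses each colour of its $(d+1)$-list exactly once across the packing, so in a given colouring a clique vertex may well use one of the old colours, and $v_*$ may use a fresh one. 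Any repair would need to specify all lists of $G_d$ and re-derive the obstruction from scratch, at which point the argument is no longer a reduction to the $d=2$ case.
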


\begin{proof}
    We will iteratively construct a graph $G$ with $\delta^\star(G)=d$ and a $(d+1)$-list-assignment $L$ such that $G$ does not admit a proper $L$-packing.
	We will do so by constructing a sequence of subgraphs $G_1,G_2,\ldots, G$ such that $V(G_1) \subset V(G_2) \subset \ldots \subset V(G)$. We will start with a list-assignment and arbitrary list-packing of $G_1$ (see below for more details). Next, for each $G_{j+1}$ apart from the final graph $G$, we consider the list-packing $C:=(c_i)_{1\leq i \leq d+1}$ of $G_j$ and we choose the lists of $V(G_{j+1})\setminus V(G_j)$ such that there is a unique extension of $C$ to $G_{j+1}$.
	For the final graph $G$, we choose the remaining lists such that $C$ cannot be extended.

	We start with choosing $G_{1}=(V_1,E_1)=K_{d+1}$ and the associated lists being equal to $[d+1]$ for all its vertices.
	Note that for every $v \in G_1$, the vector of colours $C(v)=( c_i(v))_{1\le i \le d+1}$ is a permutation of $[d+1]$.
	We now construct $G_2$ by adding a copy $v'$ for every $v \in V_1$ that is connected to all vertices in $V_1 \setminus v.$
	Let $V_2=V(G_2) \setminus V_1.$
	We let $L(v')=[d+1]\setminus \{1\} \cup \{d+2\}$.
	Note that when extending the $L$-packing, the vector $C(v')$ will be equal to $C(v)$ where $1$ is replaced by $d+2,$ as every colour $2 \le j \le d+1$ can only be used for one of the $L$-colourings $c_i$.

	We will repeat this procedure.
	In step $m$, we add copies $v'$ for every $v \in V_m$ and connect it to all vertices in $V_m \setminus v$. we call the set of added vertices $V_{m+1}.$
	For $v' \in V_{m+1}$, we let $L(v')=\s_{ij}(L(v))=L(v) \setminus \{i\} \cup \{j\} $ for some $i,j \in [d+2],$ i.e.~an $(i,j)$-shift is applied to the lists.
	Here we set $V_m=\{v_1^m,\ldots, v_{d+1}^m\}$, where $v_i^{m+1}$ denotes the copy of $v_i^m$.

	By choosing the shifts to be $\s_{1,d+2},\s_{2,1},\s_{d+2,2}$ in the first three steps, we see that in the $L$-packing, the permutations $C(v_i^1)$ and $C(v_i^4)$ are equal up to an interchange of $2$ and $1$, i.e.~the permutation $(12)$ has been applied to them.
	We can repeat this procedure and form the permutation $(123\ldots d)$ as it is equal to $(12),(13),\ldots,(1d)$, a sequence of $d-1$ transpositions.
% $$$(1d)(d(d-1))\ldots (32)(21).$
	So in the $L$-packing, $C(v_i^1)$ and $C(v_i^{3(d-1)+1})$ are two permutations that are equal up to performing the cyclic permutation $(123\ldots d)$.

	Now continue doing this another $d-2$ times.
	Now $C(v_i^{3p(d-1)+1})$ for $0 \le p \le d-1$ are all permutations of $[d+1]$ and every colour in $[d]$ has been used at all spots except one (where $d+1$ is placed every time).
	Now add a final vertex $x$ and connect it to $v_1^{3p(d-1)+1}$ for every $0 \le p \le d-1$, and let $L(x)=[d+1]$ as well.
	A proper $L$-packing of this final graph is impossible.

	In all steps, we connected new vertices to exactly $d$ existing vertices and so the degeneracy of the construction satisfies $\delta^{\star}(G)=d$.
\end{proof}

\section{Maximum degree}\label{sec:maxdegree}

In this section, we give the argument that proves Theorems~\ref{thm:maxdegree} and~\ref{thm:maxdegreecorr}.
The proof closely follows an argument of Aharoni, Berger and Ziv for a result on strong colouring~\cite{ABZ07}; see also the survey of Haxell~\cite{Hax11}.
We only give the argument for correspondence packing as the proof for list packing is essentially the same.
We note that Loh and Sudakov~\cite{LoSu07} outlined this same argument in a slightly more general context, namely, in showing that $\Lambda^\star_c(D) \le 2D+o(D)$ as $D\to\infty$.

\begin{proof}[Proof of Theorem~\ref{thm:maxdegreecorr}]
	Let $k:= 1 + \Delta(G) +  \chi_c(G)$.  Let $H$ be a $k$-fold correspondence-cover of $G$, via some correspondence-assignment $L:V(G) \rightarrow 2^{V(H)}$. Recall that this means that the vertices of $H$ are partitioned into parts $(L(v))_{v\in V(G)}$ such that $|L(v)|=k$ for all $v\in V(G)$, and if $vw \in E(G)$ there is a (possibly empty) matching between $L(v)$ and $L(w)$, and for every $vw \notin E(G)$ there is no edge between $L(v)$ and $L(w)$.

	We need to show that $G$ admits a correspondence $L$-packing. Equivalently, we need to find a proper $k$-colouring of $H$ such that for every $v\in V(G)$, no two vertices of $H$ in the same part $L(v)$ receive the same colour (recall that each colour class of such a $k$-colouring corresponds to a single $L$-colouring in the desired correspondence $L$-packing).
	To this end, let $\alpha$ be a \emph{maximum partial colouring} of $H$, i.e.~a proper $k$-colouring of the vertices of $H$ using as many vertices of $H$ as possible.
	If $\alpha$ assigns a colour to every vertex of $H$ then we are done.
	So suppose for contradiction that $\alpha$ does not assign a colour to some vertex $x$ of $H$ in some part $L(v_1)$. Then some colour in $\{1,\ldots, k\}$ is missing from $L(v_1)$; let us call it \emph{red}.

	Let $v_1,\ldots, v_n$ denote the vertices of $G$. For each $2 \leq i \leq n$, let $r_i$ denote the red vertex in $L(v_i)$ if it exists, and define  $L'(v_i):= L(v_i) \setminus \{y : \alpha(y)=\alpha(z)   \text{ for some } z \in N_H(r_i)\}$. Thus, $L'(v_i)$ consists of those vertices of $L(v_i)$ whose colour could be reassigned to $r_i$ without creating a conflict. Also let $L'(v_1)=L(v_1)$.
	Then $|L'(v_i)|\geq \chi_c(G)+1$ for every $i$.

	Next let $L''(v_i)= L'(v_i) \setminus N_H(x)$.  Since $x$ has at most one neighbour in each $L(v_i)$, it follows that $|L''(v_i)|\geq \chi_c(G)$ for each $1\leq i \leq n$, and thus by definition of correspondence colouring, $(L''(v_i))_{1\leq i \leq n}$ has an independent transversal $T''$. By replacing the representative of $T''$ in $L''(v_1)$ with $x$, this in turn yields an independent transversal $T'$ of $(L'(v_i))_{1\leq i \leq n}$ which contains $x$.
	Next, we modify $\alpha$ by giving colour red to every vertex of $T'$, and for each $i$ for which $r_i$ exists and $r_i \notin T'$ we give $r_i$ the colour of the element $t_i$ of $T'$ in $L(v_i)$. By definition of $L'(v_i)$, this does not create any conflict, so we obtain a valid $k$-colouring $\beta$. Moreover, $\beta$ colours $x$ as well as all the vertices that were coloured by $\alpha$, contradicting the maximality of $\alpha$.
\end{proof}

\section{On the permanent of random binary matrices}\label{sec:randommatrix}

In this section, we describe and derive auxiliary results on transversals and permenants of random binary matrices. These are our key tools for the proofs of Theorems~\ref{thm:fractional} and~\ref{thm:hallratio}.
Related ideas are also useful for Theorem~\ref{thm:bipartite}, which we elaborate upon in Section~\ref{sec:bip}.

Our starting point is a theorem of Everett and Stein~\cite{ES73} from 1973. For the benefit of the reader, we will use this below in a warm-up to prove the special case for Theorem~\ref{thm:fractional} of list packing bipartite graphs with $n$ vertices (Theorem~\ref{th:bipartiteLogN}).

\begin{theorem}[\cite{ES73}]\label{thm:EverettSteijn}
	The fraction of binary $k \times k$ matrices $A$ with $\Per(A)=0$  is $(1+o(1))k 2^{1-k}$, as $k \rightarrow \infty$.
\end{theorem}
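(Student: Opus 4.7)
The plan is to identify the dominant contribution to $\Pr(\Per(A) = 0)$ as coming from matrices that have an all-zero row or column, and to show that all other sources of a vanishing permanent are negligible. The key structural tool will be the Frobenius--K\"onig criterion (Lemma~\ref{le:distinctrepr}): a binary $k\times k$ matrix $A$ satisfies $\Per(A)=0$ if and only if there exist $S,T\subseteq [k]$ with $|S|+|T|\ge k+1$ for which the submatrix $A_{S\times T}$ is all zero. The extremal pairs $(|S|,|T|)\in\{(1,k),(k,1)\}$ are precisely the ``row is zero'' and ``column is zero'' events, and these should account for essentially the whole probability.

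For the \emph{lower bound}, I would let $R_i$ and $C_j$ denote the events that row $i$, respectively column $j$, is identically zero. Each has probability $2^{-k}$, and any two of these $2k$ events have an intersection of probability at most $2^{-(2k-1)}$. Bonferroni's inequality then gives
\[
\Pr\Bigl(\bigcup_i R_i \cup \bigcup_j C_j\Bigr) \;\ge\; 2k\cdot 2^{-k} \;-\; \binom{2k}{2}\cdot 2^{-(2k-1)} \;=\; (1+o(1))\,k\cdot 2^{1-k}.
\]
Since any of these events forces $\Per(A)=0$, the lower half of the claimed estimate follows.

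For the \emph{upper bound}, I would apply a union bound over all pairs $(S,T)$ allowed by the Frobenius--K\"onig criterion:
\[
\Pr(\Per(A)=0) \;\le\; \sum_{s+t\ge k+1} \binom{k}{s}\binom{k}{t}\, 2^{-st}.
\]
The pairs $(s,t)\in\{(1,k),(k,1)\}$ contribute exactly $2k\cdot 2^{-k} = k\cdot 2^{1-k}$; note that $s=1$ forces $t=k$ under the constraint $s+t\ge k+1$, so there is no overlap with the case $t=1$. Every other admissible pair satisfies $\min(s,t)\ge 2$, and these ``interior'' terms must sum to $o(k\cdot 2^{-k})$.

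The \emph{main technical step}, and what I expect to be the only nontrivial calculation, is bounding this interior sum. By the symmetry $(s,t)\leftrightarrow (t,s)$ I would assume $s\le t$, so $s\le (k+1)/2$. For fixed such $s\ge 2$, the ratio of consecutive terms in $t$ is $\tfrac{k-t}{t+1}\cdot 2^{-s}$, which is at most $\tfrac14$ throughout the range $t\ge k+1-s$ provided $s\le k/2$. Thus the sum over $t$ is dominated (up to a constant) by its first term $\binom{k}{s-1}\,2^{-s(k+1-s)}$, and multiplying by $\binom{k}{s}$ yields a bound of order $k^{O(1)}\cdot 2^{s^2 - s(k+1)}$ which, for $s=2$, is of order $k^{O(1)}\cdot 2^{-2k}$ and geometrically smaller for larger $s$. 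Summing over $s\ge 2$ produces a total of $o(k\cdot 2^{-k})$ for the interior contribution. The main obstacle is really just the careful bookkeeping to verify that each factor of $\binom{k}{s}$ is swallowed by the saving from $2^{-st}$ whenever $s+t\ge k+1$; once this is done, combining the matched upper and lower bounds yields the claimed asymptotic $(1+o(1))k\cdot 2^{1-k}$.
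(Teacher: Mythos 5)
Your proposal is correct and follows essentially the same route as the paper: the paper deduces this theorem as the $p=1/2$ case of Theorem~\ref{thr:Permenant_BernoulliDistributedMatrix}, whose proof likewise combines the Frobenius--K\"onig criterion with a union bound over zero blocks with $s+t=k+1$ (the interior terms being handled by Lemma~\ref{lem:comp_estimate}, a case analysis equivalent to your geometric-series bookkeeping) and a truncated inclusion--exclusion lower bound from the zero-row/zero-column events. The only differences are cosmetic: you specialise to $p=1/2$ and sum over all $s+t\ge k+1$ rather than exactly $k+1$.
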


Theorem~\ref{thm:EverettSteijn} can be interpreted probabilistically: a uniformly random binary matrix has permanent $0$ with probability $(1+o(1)) k 2^{1-k}$ as $k\rightarrow \infty$. For the proofs of Theorems~\ref{thm:fractional} and~\ref{thm:hallratio}, we will need a more general result that allows for an asymmetric treatment of the ones and zeroes. The following is a generalisation of Theorem~\ref{thm:EverettSteijn}.

\begin{theorem}\label{thr:Permenant_BernoulliDistributedMatrix}
	Let $0\leq p<1$ be a real number.
	Let $A$ be a random binary $k \times k$-matrix for which each entry independently equals $0$ with probability $p$, and $1$ with probability $1-p$.
	Then
	\[\Per(A)=0 \text{ with probability } 2k p^k (1+o(1)) \text{ as } k \to \infty.\]
	Moreover, the same conclusion holds if $p$ is allowed to depend on $k$, provided $k^5 p^k \rightarrow 0$ as $k\rightarrow \infty$. In particular this holds if $p \leq 1- \frac{5 \log k+ \omega(1)}{k}$ as $k\rightarrow \infty$.
\end{theorem}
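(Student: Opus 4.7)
My plan is to apply the Frobenius--König criterion (Lemma~\ref{le:distinctrepr}) to convert the permanent condition into a union over combinatorial witnesses: $\Per(A)=0$ iff there exist $S,T\subseteq[k]$ with $|S|+|T|>k$ such that $A_{S\times T}$ is all zero. Pruning rows or columns, we may take $|S|+|T|=k+1$. The $2kp^k$ main term should come from the ``trivial'' witnesses $(|S|,|T|)\in\{(1,k),(k,1)\}$---an all-zero row or column---and all other witnesses should contribute only $o(kp^k)$.

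For the lower bound I would bound $\Pr(\Per(A)=0)$ below by the probability that some row or column is entirely zero. Writing $E_i,F_j$ for the events ``row $i$ zero'' and ``column $j$ zero'' (each of probability $p^k$), their pairwise intersections have probability at most $p^{2k-1}$, and Bonferroni yields
\begin{equation*}
\Pr\!\left(\,\bigcup_{i}E_i\cup\bigcup_{j}F_j\,\right) \;\geq\; 2kp^k - \binom{2k}{2}p^{2k-1}.
\end{equation*}
The hypothesis $k^5p^k \to 0$ implies $kp^{k-1} \to 0$ (via a short case-split at $p=1/k$), so the error is $o(kp^k)$, yielding $\Pr(\Per(A)=0)\geq 2kp^k(1-o(1))$.

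For the upper bound I would apply a union bound over the minimal witnesses, obtaining
\begin{equation*}
\Pr\bigl(\Per(A)=0\bigr) \;\le\; \sum_{s=1}^{k}\binom{k}{s}\binom{k}{k+1-s}\,p^{s(k+1-s)}.
\end{equation*}
The $s\in\{1,k\}$ terms each contribute exactly $kp^k$, giving the main term. The heart of the proof---and what I expect to be the main obstacle---is showing that the residual sum over $s\in[2,k-1]$ is $o(kp^k)$. The symmetry $s\leftrightarrow k+1-s$ reduces matters to $s\le(k+1)/2$; using $\binom{k}{s}\binom{k}{s-1}\le k^{2s-1}/(s!(s-1)!)$ together with Stirling, and writing $u=s-1$, one obtains
\begin{equation*}
\frac{\binom{k}{s}\binom{k}{s-1}\,p^{s(k+1-s)}}{k\,p^k} \;\le\; g(u)^u, \qquad g(u) := \left(\frac{ek}{u}\right)^{2}p^{k-1-u}.
\end{equation*}
Since $\log g$ is convex in $u$, the maximum of $g$ on $[1,(k-1)/2]$ is attained at an endpoint; a case-split on $p$ shows that both $g(1)=e^2k^2p^{k-2}$ and $g((k-1)/2)\approx 4e^2 p^{(k-1)/2}$ are $o(1)$ under $k^5p^k\to 0$. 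Hence $g(u)\le g_0=o(1)$ uniformly, the geometric sum $\sum_{u\geq 1}g(u)^u$ is also $o(1)$, and multiplying back by $kp^k$ and symmetrising gives the required bound.

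The ``in particular'' clause follows at once from $(1-(5\log k+\omega(1))/k)^k\leq\exp(-5\log k-\omega(1))=o(k^{-5})$, which confirms the hypothesis $k^5p^k\to 0$ and hence the main conclusion.
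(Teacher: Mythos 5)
Your proposal is correct and follows essentially the same route as the paper: the Frobenius--K\"onig reduction to minimal zero submatrices with $|S|+|T|=k+1$, a union bound whose $s\in\{1,k\}$ terms give the main term $2kp^k$, and a Bonferroni lower bound via all-zero rows and columns. The only difference is in bounding the intermediate terms $2\le s\le k-1$, where the paper proves a separate lemma showing each term is individually $o(p^k/k)$ via a case split at $s\ge k/2$, while you control the whole residual sum at once through convexity of $\log g$ and a geometric series; both computations are valid under the hypothesis $k^5p^k\to 0$.
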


%\todo[inline]{Perhaps also prove the stronger statement: instead of all entries being independent, it is sufficient that for each subset of columns $C$ and rows $R$ of $A$ such that $|C|+|R|=k+1$, the entries of the $|C| \times |R|$ submatrix of $A$ induced by $C$ and $R$ are negatively correlated.
%	This is what we need for the $\Delta/\log \Delta$ proof for bipartite graphs.}

In the proof of Theorem~\ref{thr:Permenant_BernoulliDistributedMatrix}, we will use the following technical lemma.

\begin{lemma}\label{lem:comp_estimate}
	Let $0 \leq p <1$ be such that $k^5p^k \rightarrow 0$ as $k \rightarrow \infty$. Then as $k \rightarrow \infty$,
	\[\max_{\substack{s, t \in \mathbb{N} \text{ such that}\\ 2 \leq s \leq t \leq k \text{ and }\\s+t \geq k}}\left\{ \binom{k}{s} \binom{k}{t} p^{st} \right\} = o\left(\frac{1}{k} p^k\right).\]
\end{lemma}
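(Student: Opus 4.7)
The plan is to reduce the maximum to a one-parameter family via monotonicity, dispatch the regime where $s$ is large by a crude bound, and handle the main regime with a logarithmic estimate. Set $h(s,t) := \binom{k}{s}\binom{k}{t}p^{st}$. A direct calculation gives $h(s, t+1)/h(s, t) = \frac{k-t}{t+1}p^s$, which is at most $1$ whenever $t \ge \max(s, k-s)$ (since then $k-t \le t+1$ and $p<1$). Hence the maximum over admissible $t$ is attained at $t=k-s$ if $s\le k/2$, and at $t=s$ if $s\ge k/2$, reducing the problem to bounding $h_A(s):=\binom{k}{s}^{2}p^{s(k-s)}$ on $s\in[2,k/2]$ and $h_B(s):=\binom{k}{s}^{2}p^{s^2}$ on $s\in[k/2, k]$.

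For $s\ge k/2$ I would apply the crude bounds $\binom{k}{s}^2 \le \binom{k}{\lfloor k/2\rfloor}^2 = O(4^k/k)$ and $s^2 \ge k^2/4$, obtaining $k\cdot h_B(s)/p^k \le O(4^k)\cdot(p^k)^{k/4-1}$. Since the hypothesis $k^5 p^k\to 0$ means $p^k = o(k^{-5})$, the factor $(p^k)^{k/4-1}$ decays super-exponentially and easily overwhelms $4^k$.

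The substantive regime is $s\in[2,k/2]$. Writing $L := -k\log p$, the hypothesis reads $L \ge 5\log k + \omega(1)$ with $\omega(1)\to\infty$. Starting from $\binom{k}{s}\le (ek/s)^s$, routine manipulation yields
\[
\log\!\Big(\tfrac{k\cdot h_A(s)}{p^k}\Big) \le (1+2s)\log k + 2s - 2s\log s + L\!\left(1 - s + \tfrac{s^2}{k}\right).
\]
For $s\in[2, k/2]$ and $k$ large, $1-s+s^2/k \le 1-s/2 \le 0$, so substituting the lower bound on $L$ (noting the inequality flips because $1-s+s^2/k$ is non-positive) yields
\[
\log\!\Big(\tfrac{k\cdot h_A(s)}{p^k}\Big) \;\le\; (6-3s+5s^2/k)\log k + 2s - 2s\log s + \omega(1)\!\left(1 - s + \tfrac{s^2}{k}\right).
\]
I would then show this tends to $-\infty$ uniformly in $s$ by splitting into two sub-ranges. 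For $s\in[2, 11]$ the $\omega(1)$ term, bounded above by $-\omega(1)/2$, dominates the bounded remaining terms; a brief check shows $1 - s + s^2/k \le -1/2$ for each such fixed $s$ and $k$ sufficiently large. For $s\ge 12$, using $5s^2/k \le 5s/2$, the coefficient $6-3s+5s^2/k\le 6-s/2$ is strongly negative, so the prefactor $(6-3s+5s^2/k)\log k$ drives the estimate to $-\infty$.

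The main obstacle is the complementary regimes of strength of the two negative contributions: the logarithmic prefactor essentially vanishes at $s=2$ (forcing reliance on the $\omega(1)$ correction), while for larger $s$ it is the prefactor that provides the decay. A careful case split is therefore needed to ensure uniform convergence regardless of how slowly $\omega(1)$ may grow.
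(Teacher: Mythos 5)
Your proof is correct and structurally parallel to the paper's: both split into the regime $s\ge k/2$ (handled crudely, since there $\binom{k}{s}\binom{k}{t}\le 4^k$ while $st\ge k^2/4$) and the main regime $2\le s<k/2$, where both reduce to $t=k-s$ using $t\ge k-s\ge k/2$ and then exploit $\binom{k}{s}\le k^s$ (you use $(ek/s)^s$) together with the hypothesis in the form $-k\log p\ge 5\log k+\omega(1)$. The difference lies only in the final estimate for the main regime. The paper gets a uniform bound in one step via the identity $k^{2s+1}p^{(s-1)k-s^2}=(k^5p^k)^{(2s+1)/5}\cdot p^{3k(s-2)/5-s^2}$, using $s\ge2$ to absorb the first factor into $k^5p^k$ and $k\ge 2s+1$ to bound the second by an absolute power of $p^{-1}$; you instead take logarithms and split into $s\le 11$, where the $\omega(1)$ slack supplies the decay, and $s\ge 12$, where the negative coefficient of $\log k$ does. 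Both work; the paper's is shorter, yours makes more visible exactly where the exponent $5$ is used. One boundary slip: at $s=12$ your estimate $6-3s+5s^2/k\le 6-s/2$ yields $0$, so the $\log k$ prefactor is not ``strongly negative'' there as claimed. This is harmless, since the remaining terms $2s-2s\log s\le 24(1-\log 12)<0$ and $\omega(1)\left(1-s+s^2/k\right)\le -5\,\omega(1)$ still drive the expression to $-\infty$; alternatively, just start the second sub-range at $s=13$.
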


\begin{proof}

	Let $2 \leq s \leq t \leq k$ be such that $s+t\geq k$.

	If $s \ge \frac k2$, we have
	\[\binom{k}{s} \binom{k}{t} p^{st} <4^k p^{k^2 /4} =  o\left(\frac{1}{k} p^k\right).\]

	If $2\le  s < \frac k2$, then

	\begin{align*}
		\frac{k}{p^k} \binom{k}{s} \binom{k}{t} p^{st} & \le \frac{k}{p^k} \binom{k}{s} \binom{k}{k-s} p^{s(k-s)}                    \\
		                                               & \le k^{2s+1} p^{  (s-1)k-s^2 }                                              \\
		                                               & =  \left( k^5 p^k \right)^{\frac{2s+1}{5}} \cdot p^{\frac{3k(s-2)}{5} -s^2} \\
& \leq  k^5 p^k  \cdot p^{\frac{3(2s+1)(s-2)}{5} -s^2} \\		                                              
		                                               & \leq  k^5 p^{k-6}= o(1)
	\end{align*}
	as $k \rightarrow \infty$.
	%
	%The last line followed because $k^5p^k =o(1)$, and
	%for all $2 \leq s < k/2$ we have $ \frac{3k(s-2)}{5} - s^2 \geq -4$.
	%
	%	If $3\le  s < \frac k2$, then 
	%	\begin{align*}
	%	\binom{k}{s} \binom{k}{t} p^{st} &\le \binom{k}{s} \binom{k}{k-s} p^{s(k-s)}\\ 
	%	&\le k^{2s} p^{  sk/2 }\\
	%	&= o\left(\frac{1}{k} p^k \right).
	%	\end{align*}
	%
	%	If $s=2$, we have 
	%	\begin{align*}
	%	\binom{k}{s} \binom{k}{t} p^{st} &\le \binom{k}{s} \binom{k}{k-s} p^{s(k-s)}\\ 
	%	&\le k^{4} p^{ 2(k-2) }\\
	%	&=o\left( \frac{1}{k} p^k\right)
	%	\end{align*}
	%
	%	since $k^5 p^{k-4} \to 0$, as $k \to \infty.$	
\end{proof}

\begin{proof}[Proof of Theorem~\ref{thr:Permenant_BernoulliDistributedMatrix}]
	By Lemma~\ref{le:distinctrepr}, if $\Per(A)=0$ then there are sets $S$ and $T$ with $s$ and $t$ elements respectively for which $s+t>k$ and the submatrix $B=A_{S\times T}$ only contains zeros. In particular, we have such a submatrix $B$ with $s+t=k+1$.
	Let us call such $B$ a \emph{$t$-bad submatrix}.

	Given $s$ and $t$, there are $\binom{k}s \binom kt$ choices for the sets $S,T$.
	The probability that $A_{i,j}=0$ for all $ i\in S , j \in T$ equals $p^{st}.$ So the probability that there is a $t$-bad submatrix is upper bounded by $\binom{k}{k+1-t} \binom{k}{t} p^{(k+1-t)t}$.

	We now take the union bound over all $1\leq t \leq k$.
	The cases $t=1$ and $t=k$ each contribute $kp^k$, while each $2\leq t \leq k-1$ contributes $o\left(\frac{1}{k} p^k\right)$ by Lemma~\ref{lem:comp_estimate}. 	Hence the probability that there exists a $t$-bad submatrix for some $1\leq t \leq k$ is at most $2kp^k + (k-2) \cdot o\left(\frac{1}{k} p^k\right)=(1+o(1))2kp^k$, as desired.

	On the other hand, we know that any matrix with a row or column with only zeros has permanent equal to $0$.
	Using a truncated version of the principle of inclusion--exclusion, i.e.~subtracting the probabilities that are counted twice, we note that the probability that this happens is at least
	$2kp^k -( 2\binom{k}{2}p^{2k} + k^2 p^{2k-1} )=(1-o(1))2kp^k,$ as desired.
\end{proof}

%Analogously, we have the following generalisation for the result for nonsquare matrices in~\cite{S75}.
%\begin{corollary}\label{cor:Permenant_BernoulliDistributedMatrix}
%		Let $0<p<1$ be a fixed probability.
%		Let $A$ be a $\ell \times k$- matrix with $\ell<k$ where every entry is taken to be $0$ with probability $p$ and equals $1$ with probability $1-p$.
%		Then $Per(A)=0$ with probability $\ell p^k (1+o(1)).$
%\end{corollary}

%\begin{remark}
\paragraph{Remark.}
%	Note that Theorem~\ref{thr:Permenant_BernoulliDistributedMatrix} also holds with the same proof if instead of independence, we require that for each subset of columns $C$ and rows $R$ of $A$ such that $|C|+|R|=k+1$, the random variables giving the entries of the $|R| \times |C|$ submatrix of $A$ induced by $R$ and $C$ are negatively correlated.

In Theorem~\ref{thr:Permenant_BernoulliDistributedMatrix}, the opposite random variables $1-A_{i,j}$ are independent and Bernoulli($p$) distributed. We note that the \emph{upper} bound $2k p^k (1+o(1))$ in Theorem~\ref{thr:Permenant_BernoulliDistributedMatrix} remains true with the same proof if instead of independence, we only require that the random variables $1-A_{i,j}$ are negatively correlated. 
 %for each subset of rows $S$ and columns $T$ of $A$ such that $|S|+|T|=k+1$, the random variables giving the entries of the $|S| \times |T|$ submatrix $A_{S \times T}$ are negatively correlated.
 Indeed, in this case we still have for each subset of rows $S$ and columns $T$ of $A$ that the probability that $A_{i,j}=0$ for all $ i\in S , j \in T$ is at most  (rather than equal to) $p^{st},$ which is sufficient.
%\end{remark}

\section{Fractional chromatic number and Hall ratio}\label{sec:approx}

In this section we prove Theorems~\ref{thm:fractional} and~\ref{thm:hallratio}, using the results on random binary matrices from Section~\ref{sec:randommatrix}.
As a warm-up using the classic result of Everett and Stein, we first derive the special case of Theorem~\ref{thm:fractional} for bipartite graphs.

\begin{theorem}\label{th:bipartiteLogN}
	Let $G$ be a bipartite graph on $n$ vertices. Then as $n \rightarrow \infty$,
	\[\chi_\ell^\star(G) \leq  (1+o(1))  \log_2 n.\]
\end{theorem}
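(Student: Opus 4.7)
The plan is to build, for each vertex $v$, a random $k \times k$ binary matrix $M^\star(v)$ indexed by positions $i \in [k]$ and colours $c \in L(v)$, so that each $M^\star(v)$ has i.i.d.\ $\mathrm{Bernoulli}(1/2)$ entries (to apply Theorem~\ref{thm:EverettSteijn}) while edge-consistency is baked into the family of matrices, so that independently chosen $1$-transversals automatically combine into a proper $L$-packing.

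Concretely, fix $\varepsilon > 0$, set $k = \lceil (1+\varepsilon)\log_2 n \rceil$, and write $G = (A \cup B, E)$. For every colour $c \in \bigcup_v L(v)$ and every position $i \in [k]$, flip an independent fair coin $T_{c,i} \in \{A, B\}$; then define
\[
    M^\star(v)[i, c] = \mathbf{1}\bigl\{T_{c,i} \text{ equals the side of the bipartition containing } v\bigr\}.
\]
For each fixed $v$, the $k^2$ entries of $M^\star(v)$ are $k^2$ independent $\mathrm{Bernoulli}(1/2)$ random variables, so Theorem~\ref{thm:EverettSteijn} gives $\Pr[\Per(M^\star(v)) = 0] = (1+o(1)) \cdot k \cdot 2^{1-k}$, and a union bound over the $n$ vertices yields $\Pr[\exists v : \Per(M^\star(v)) = 0] \le (1+o(1)) \cdot 2nk \cdot 2^{-k} = o(1)$ with our choice of $k$.

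If every $M^\star(v)$ has a $1$-transversal $\sigma_v : [k] \to L(v)$, then the colourings $c_i(v) := \sigma_v(i)$ form the required $L$-packing. Indeed, for any edge $uv$, any position $i$, and any common colour $c \in L(u) \cap L(v)$, we cannot have $M^\star(u)[i,c] = M^\star(v)[i,c] = 1$ simultaneously, because $u$ and $v$ sit on opposite sides of the bipartition while $T_{c,i}$ picks only one side. Hence no collision $\sigma_u(i) = \sigma_v(i)$ occurs across an edge, the $c_i$ are $k$ pairwise disjoint proper $L$-colourings, and $\chi^\star_\ell(G) \le k$; since $\varepsilon > 0$ is arbitrary, the theorem follows.

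The main obstacle lies in the tension between two opposing requirements: we want each $M^\star(v)$ to behave like independent fair coins (so Everett--Stein applies at the optimal $p = 1/2$ and delivers the sharp leading constant), yet we also want the $1$-transversals across different vertices to be automatically compatible (so no explicit repair step is needed). Randomising only the choice of which side of the bipartition ``owns'' each pair $(c, i)$ resolves both demands at once, and this construction is the natural template for Theorems~\ref{thm:fractional} and~\ref{thm:hallratio} in Section~\ref{sec:approx}, where the two-valued $T_{c,i}$ gets replaced by a random independent set drawn from a fractional colouring of $G$ and Theorem~\ref{thr:Permenant_BernoulliDistributedMatrix} stands in for Theorem~\ref{thm:EverettSteijn}.
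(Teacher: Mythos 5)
Your construction is the paper's own proof up to relabeling: assigning each colour--position pair a random side $T_{c,i}\in\{A,B\}$ and seeking a $1$-transversal of the indicator matrix at every vertex is identical to the paper's assignment of a uniform vector $x_\ell\in\{0,1\}^k$ to each colour, with $0$-transversals sought on one side and $1$-transversals on the other, followed by the same application of Everett--Stein and a union bound. The argument is correct, including the verification of properness across edges and disjointness from the transversal property.
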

\begin{proof}
	Let $\eps>0 $ and set $k$ to be an integer that is at least $(1+\eps) \log_2 n$. Let $L$ be an arbitrary $k$-list-assignment for $G$.

	We consider the union of all lists, $\mathcal{L} = \bigcup_{v \in V(G)} L(v)$, and colour each list element $\ell \in \mathcal{L}$ independently with a uniformly random vector $x_{\ell} \in\{0,1\}^{k}$.
	We use these vectors to associate a random binary matrix with each vertex of $G$, as follows. For each $v\in V(G)$, let $M(v)$ be a $k \times k$ matrix formed by concatenating the vectors $(x_{\ell})_{\ell \in L(v)}$ in some arbitrary way.

	Let $V_0,V_1\subseteq V(G)$ denote the two parts of the bipartition of $G$.
	Now, for each vertex $v\in V(G)$ we define a \emph{bad event}  $\mathcal{B}(v)$ as follows: if $v \in V_0$, then $\mathcal{B}(v)$ is the event that $M(v)$ has no $0$-transversal, and similarly if $v \in V_1$, then $\mathcal{B}(v)$ is the event that $M(v)$ has no $1$-transversal.

	If there are no bad events, then for each $v\in V_0$ we can choose a $0$-transversal of $M(v)$, and for each $v\in V_1$ we can choose a $1$-transversal of $M(v)$.  These transversals then determine the desired $k$ disjoint $L$-colourings $c_1,c_2, \ldots, c_k$ as follows: for $1\leq i \leq k$ and $v\in V(G)$, we set $c_i(v)$ to be the element of $L(v)$ that corresponds to the $i$-th element of the transversal of $M(v)$. Since we choose the colours of $v$ according to a transversal of $M(v)$, we have $c_i(v) \neq c_j(v)$ for any $i\neq j$, so the colourings $c_1,\ldots, c_k$ are indeed disjoint. Furthermore, for each $i$ the colouring $c_i$ is proper because the colour sets on $V_0$ and $V_1$ are disjoint; indeed, any vertex $v\in V_0$ (resp.\ $v\in V_1$) is coloured by an element $l \in \mathcal{L}$ such that the $i$-th element of $x_l$ is $0$  (resp.\ $1$).

	Thus, it suffices to find a colouring without bad events. For $v\in V_1$, the probability of $\mathcal{B}(v)$ equals the probability that a uniformly random binary matrix has no $1$-transversal.
	By Theorem~\ref{thm:EverettSteijn}, the number of binary $k\times k$ matrices that have no $1$-transversal is at most $(1+o(1)) k 2^{k^2-k+1}$ as $n\rightarrow \infty$. Hence, $\Pr(\mathcal{B}(v)) \leq (1+o(1))\cdot \frac{k \cdot 2^{k^2-k+1}}{ 2^{k^2}}  = (2+o(1)) k2^{-k}$.
	By symmetry, the same bound holds for every $v\in V_0$. Therefore, by a union bound, the probability that at least one bad event holds is at most $n \cdot (2+o(1)) k2^{-k}$, and for $n$ large enough this is strictly smaller than $1$. Thus, there exists a colouring of $\mathcal{L}$ without bad events and hence the desired $L$-packing.
\end{proof}

We now proceed to prove Theorems~\ref{thm:fractional} and~\ref{thm:hallratio}, thus extending the result on bipartite graphs (Theorem~\ref{th:bipartiteLogN}) to graphs with bounded fractional chromatic number or bounded Hall ratio. This includes the special case of multipartite graphs and generalises bounds on $\chi_\ell$ from~\cite{CJKP20} and~\cite{Alo92}.

\begin{theorem}\label{thm:fractional_restated}
	The graphs $G$ with $n$ vertices and at least one edge satisfy as $n\rightarrow \infty$
	\[\chi_\ell^\star(G) \le (5+o(1)) \frac{\log n}{\log(\chi_f(G)/(\chi_f(G)-1))}\le (5+o(1)) \cdot \chi_f(G) \log n.\]
	Moreover, for every fixed rational number $m$,  the graphs $G$ with $\chi_f(G)=m$ and $n$ vertices and at least one edge satisfy as $n\rightarrow \infty$
	\[\chi_\ell^\star(G) \le (1+o(1))\cdot  \frac{\log n}{\log(m/(m-1))} \le (1+o(1))\cdot m \log n .\]

\end{theorem}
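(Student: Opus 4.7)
The approach generalizes the bipartite warm-up (Theorem~\ref{th:bipartiteLogN}) by replacing the bipartition of $V(G)$ with a fractional colouring of $G$, and by using Theorem~\ref{thr:Permenant_BernoulliDistributedMatrix} in place of Everett--Stein. Set $m:=\chi_f(G)$ and $p:=(m-1)/m$; take $k:=\lceil(5+\varepsilon)\log n/\log(m/(m-1))\rceil$ in the general case, or $k:=\lceil(1+\varepsilon)\log n/\log(m/(m-1))\rceil$ when $m$ is a fixed rational. Fix any $k$-list-assignment $L$ of $G$ and let $\mathcal{L}:=\bigcup_{v\in V(G)}L(v)$.

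Let $\mathbf{S}$ be a random independent set of $G$ drawn from a distribution witnessing $\chi_f(G)=m$, so that $\Pr(v\in\mathbf{S})\ge 1/m$ for every $v\in V(G)$. For each $(\ell,i)\in\mathcal{L}\times[k]$, draw an independent copy $S_{\ell,i}$ of $\mathbf{S}$, and to each $v\in V(G)$ associate the random $k\times k$ binary matrix $M(v)$ with rows indexed by $[k]$ and columns by $L(v)$, defined by $M(v)_{i,\ell}:=\mathbf{1}\{v\in S_{\ell,i}\}$. The entries of each $M(v)$ are mutually independent, with each equal to $1$ with probability at least $1/m$. If every $M(v)$ contains a $1$-transversal, then taking $c_i(v)\in L(v)$ to be the column hit by that transversal in row $i$ produces $k$ pairwise-disjoint $L$-colourings $c_1,\dots,c_k$, and each $c_i$ is proper because $c_i(v)=c_i(v')=\ell$ for $vv'\in E(G)$ would force both $v$ and $v'$ into $S_{\ell,i}$, contradicting independence of $S_{\ell,i}$.

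It remains to lower bound the probability that every $M(v)$ admits a $1$-transversal. Since the event ``no $1$-transversal'' is monotone decreasing in the matrix entries, a standard coupling dominates $\Pr(M(v)\text{ has no }1\text{-transversal})$ by the value in the i.i.d.\ Bernoulli$(1/m)$ case, which is $(2+o(1))kp^k$ by Theorem~\ref{thr:Permenant_BernoulliDistributedMatrix}, provided $k^5p^k\to 0$. Our choice of $k$ gives $p^k\le n^{-5-\varepsilon}$ (respectively $n^{-1-\varepsilon}$), and using the trivial bound $m\le n$ the condition $k^5p^k\to 0$ is routinely verified, so a union bound over $V(G)$ shows that with probability at least $1-(2+o(1))nkp^k\to 1$ every $M(v)$ has a $1$-transversal, and hence the desired $L$-packing exists. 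The second inequality in each part of the statement follows from the elementary estimate $1/\log(m/(m-1))\le m$. The main delicate point I expect is verifying the condition $k^5p^k\to 0$ uniformly as $m$ varies with $n$: this is precisely what forces the constant $5$ in the general bound, whereas for fixed $m$ the $k^5$ buffer in Theorem~\ref{thr:Permenant_BernoulliDistributedMatrix} is unnecessary and the union bound alone pins down the $(1+\varepsilon)$ threshold.
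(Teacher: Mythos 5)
Your proposal is correct and follows essentially the same route as the paper: a random $k\times k$ matrix $M(v)$ per vertex with independent entries equal to $1$ with probability $1/\chi_f(G)$, an application of Theorem~\ref{thr:Permenant_BernoulliDistributedMatrix} with $p=(m-1)/m$ to bound the probability of having no $1$-transversal, and a union bound over the $n$ vertices, with disjointness coming from the transversal structure and properness from the independence of the underlying sets. The only cosmetic difference is that you sample one independent set per (colour, row) pair directly from a distribution witnessing $\chi_f(G)$, whereas the paper samples one random vector in $[a]^k$ per colour against a fixed $(a,b)$-colouring; these constructions are equivalent, and your monotone-coupling remark correctly handles the fact that $\Pr(v\in\mathbf{S})$ may exceed $1/m$.
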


\begin{proof}
	We give the proof for the first bound (where $\chi_f(G)$ is allowed to grow with $n$), and at the end we detail why almost the same proof also yields the second bound (for $\chi_f(G)$ bounded by a constant independent of $n$).

	We write $m:=\chi_f(G)$.
	Since the clique number is a lower bound for fractional chromatic number and $G$ has at least one edge, we have $m\geq 2$.

	Let $\eps>0$ and consider a $k$-list-assignment $L$ with $k= \lfloor \frac{5+\eps}{\log(m/(m-1))} \cdot \log n \rfloor$. We may assume that $k<n$, since otherwise there is an $L$-packing due to Theorem~\ref{thm:n}.
	We also note that $k \rightarrow \infty$ if and only if $n \rightarrow \infty$; this is where we use that $m \geq 2$.

	Let $\mathcal{L}=\bigcup_{u \in V(G)}L(u)$ be the set of all colours.
	For a positive integer $a$, recall that $[a]$ is the set $\{1,\ldots, a\}$.
	Let $c$ be a proper $(a,b)$-colouring of $G$ with $\frac ab = m$. By definition, this means that $c: V(G) \rightarrow [a]^b$ is a function such that $c(u) \cap c(v)=\emptyset$ for every two adjacent vertices $u$ and $v$.
	For every $\ell \in \mathcal{L}$, let $x_{\ell}$ be a uniformly random vector in $[a]^k.$
	For each $v\in V(G)$, let $M(v)$ be a $k \times k$ matrix formed by concatenating the vectors $(x_{\ell})_{\ell \in L(v)}$ in some arbitrary way.

	For every vertex $v\in V(G)$, we define the bad event $\mathcal{B}(v)$ as the event that the $[a]$-valued $k \times k$ matrix $M(v)$ has no transversal with only elements from $c(v)$.
	Consider the binary matrix $M^*(v)$ obtained from $M(v)$ by replacing every element that belongs to $c(v)$ with a $1$, and every other element with a $0$.

	Note that $M^*(v)$ is a random binary $k \times k$ matrix for which each element independently equals $0$ with probability $p=\frac{a-b}{a}=\frac{m-1}{m}$.
	Therefore, we wish to apply Theorem~\ref{thr:Permenant_BernoulliDistributedMatrix} to $M^*(v)$
	with $p=\frac{m-1}{m}$.
	For that, we first need to verify the condition %\footnote{The condition of Theorem~\ref{thr:Permenant_BernoulliDistributedMatrix} is the main obstruction for finding a better constant in Theorem~\ref{thm:fractional_restated}; if it weren't for this part of the argument, we could have improved to $\chi_\ell^\star(G) \leq (2+o(1)) \cdot \chi_f(G) \cdot \log(n)$.}
	that $k^5p^k=o(1)$ as $k \rightarrow \infty$. This is satisfied because
	\begin{align*}
		k^5 \cdot p^k & <  n^5 \cdot \left( \frac{m-1}{m} \right)^{ \frac{(5+\eps)\log n}{\log(m/(m-1))} - 1}                                    \\
		              & \leq n^5  \cdot \exp \left( \frac{(5+\eps) \log n}{\log(m/(m-1))}   \cdot \log{\left(\frac{m-1}{m}\right)}  \right) \cdot 2 \\
		              & = n^5  \cdot n^{-(5+\eps)} \cdot 2
	\end{align*}
	goes to $0$ as $n \rightarrow \infty$.

	So we may indeed apply Theorem~\ref{thr:Permenant_BernoulliDistributedMatrix} to $M^*(v)$, which yields that $M^*$ has no $1$-transversal with probability $2kp^k(1+o(1))$. Therefore the probability that $\mathcal{B}(v)$ occurs is $2kp^k(1+o(1))$, as $k \rightarrow \infty$.

	By a union bound, the probability that at least one bad event occurs is at most
	\begin{align*}
		n\cdot 2k\left(\frac{m-1}{m}\right)^{k}\left(1+o(1) \right) & < 2n^2 \cdot \left( \frac{m-1}{m} \right)^{ \frac{(5+\eps)\log(n)}{\log(m/(m-1))}-1} \cdot(1+o(1))                                  \\
		                                                            & \leq 4 n^2  \cdot \exp \left( \frac{(5+\eps) \log(n)}{\log(m/(m-1))}   \cdot \log{\left(\frac{m-1}{m}\right)}  \right) \cdot(1+o(1)) \\
		                                                            & \leq \frac{4}{e \cdot n^3}  \cdot(1+o(1)).
	\end{align*}

	For $n$ large enough (and hence for $k$ large enough), this is smaller than $1$.
	So with positive probability, every $M(v)$ has a transversal with only elements from $c(v)$.

	These transversals then determine the desired $k$ disjoint $L$-colourings $c_1,c_2, \ldots, c_k$ as follows: for $1\leq i \leq k$ and $v\in V(G)$, we set $c_i(v)$ to be the element of $L(v)$ that corresponds to the $i$-th element of the transversal of $M(v)$. Since we choose the colours of $v$ according to a transversal of $M(v)$, we have $c_i(v) \neq c_j(v)$ for any $i\neq j$, so the colourings $c_1,\ldots, c_k$ are indeed disjoint. Furthermore, for each $i$ the colouring $c_i$ is proper because for any two neighbours $u,v \in V(G)$, they are coloured by elements $\ell_u, \ell_v \in \mathcal{L}$ such that the $i$-th element of $x_{\ell_u}, x_{\ell_v}$ belongs to $c(u)$ and $c(v)$ respectively, which are disjoint since $c$ is a proper $(a,b)$-colouring.

	This concludes our proof that  $\chi_\ell^\star(G) \leq (5+o(1)) \cdot  \frac{\log n}{\log(\chi_f(G)/(\chi_f(G)-1))}$. We remark that the condition of Theorem~\ref{thr:Permenant_BernoulliDistributedMatrix} is the main obstruction for finding a better constant; if it were not for that part of the argument, we could have improved the factor $5+o(1)$ to $2+o(1)$.
	To confirm that $5+o(1)$ \emph{can} be improved to $1+o(1)$ in the special case that $\chi_f(G)=m$ is a constant independent of $n$, we need to check that our two asymptotic estimates survive if we instead choose list size $k= \lfloor \frac{(1+\eps) \log n}{\log(m/(m-1))} \rfloor$: the condition of Theorem~\ref{thr:Permenant_BernoulliDistributedMatrix} indeed still holds because \[k^5\cdot p^k \leq  \left(\frac{(1+\eps) \log n}{\log(m/(m-1))}\right)^5  \cdot n^{-(1+\eps)}\cdot 2 \rightarrow 0\] as $n\rightarrow \infty$, and so the probability that a bad event occurs is bounded from above by \[n\cdot 2k \cdot\left(\frac{m-1}{m} \right)^k (1+o(1)) \leq 4n \cdot  \frac{(1+\eps) \log n}{\log(m/(m-1))} \cdot n^{-(1+\eps)} (1+o(1))  \rightarrow 0\] as $n \rightarrow \infty$.
\end{proof}

As an almost immediate corollary, we can derive the following bound, which is reminiscent of a similar result for $\chi_\ell$ instead of $\chi_\ell^\star$ in~\cite{NoPo23}.

\begin{corollary}
	Let $G$ be a graph with order $n$ and Hall ratio $\rho.$ Then \[\chi_\ell^\star(G) \le (5+o(1)) \cdot \rho \log^2 n.\]
\end{corollary}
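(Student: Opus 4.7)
The plan is to derive the corollary directly from Theorem~\ref{thm:fractional_restated} by sandwiching $\chi_f(G)$ between $\rho(G)$ and a logarithmic factor in $n$.

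First, I would establish the standard greedy bound $\chi_f(G) \le \chi(G) \le (1+o(1))\rho(G)\log n$. By the definition of Hall ratio, every nonempty subgraph $H\subseteq G$ contains an independent set of size at least $|H|/\rho(G)$. Iteratively peeling off such an independent set as a new colour class leaves at most $n(1-1/\rho)^i \le n e^{-i/\rho}$ vertices after $i$ rounds, so choosing $i = \lceil \rho \log n\rceil$ exhausts the vertex set. This gives $\chi(G) \le \rho(G)\log n + 1$, and hence $\chi_f(G) \le (1+o(1))\rho(G)\log n$ as $n\to\infty$.

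Second, I would substitute this into the second inequality of Theorem~\ref{thm:fractional_restated}. Assuming $G$ has at least one edge (the edgeless case is trivial since then $\chi_\ell^\star(G)=1$), this yields
\[
\chi_\ell^\star(G) \le (5+o(1))\,\chi_f(G)\log n \le (5+o(1))(1+o(1))\,\rho(G)\log^2 n = (5+o(1))\,\rho(G)\log^2 n,
\]
which is precisely the claimed bound.

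There is no real obstacle in this argument: both ingredients are already at our disposal, and the combination is essentially bookkeeping. The only minor care needed is to remark that the edgeless case is handled trivially (so that Theorem~\ref{thm:fractional_restated} applies) and that the product $(5+o(1))(1+o(1))$ collapses to $5+o(1)$ as $n\to\infty$.
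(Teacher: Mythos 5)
Your proposal is correct and follows essentially the same route as the paper: bound $\chi(G)\le(1+o(1))\rho\log n$ by iteratively extracting independent sets of size at least a $1/\rho$-fraction of the remaining graph, then plug $\chi_f(G)\le\chi(G)$ into the second inequality of Theorem~\ref{thm:fractional_restated}. Your extra remarks on the edgeless case and the collapse of $(5+o(1))(1+o(1))$ are fine but not essential.
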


\begin{proof}
	Iteratively, one can select an independent set of size at least $1/{\rho}$ times the order of the remaining graph.
	Since $n\left(1-\frac1{\rho}\right)^{ \rho \log n}<1$, we have $\chi(G) \le \rho \log n.$
	Hence by Theorem~\ref{thm:fractional_restated}, we can conclude as $\chi_\ell^\star(G) \le (5+o(1)) \cdot \chi(G) \log n.$
\end{proof}

As a corollary of Theorem $1.3$ in~\cite{CJKP20} and Theorem~\ref{thm:fractional_restated}, we immediately get the following Ramsey-type result.
\begin{corollary}
	There exists a constant $C>0$ such that $\chi_\ell^\star(G) \le C \cdot \min\{ \sqrt{ n \log n}, \frac{n \log n}d \} $ for every triangle-free graph $G$ with $n$ vertices and minimum degree $d$.

\end{corollary}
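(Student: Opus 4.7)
The proof will be a direct combination of the two cited results, with the role of Theorem~\ref{thm:fractional_restated} being to convert a bound on $\chi_f$ into a bound on $\chi_\ell^\star$ at the cost of a single $\log n$ factor.

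The plan is as follows. By Theorem~\ref{thm:fractional_restated}, since any graph with minimum degree $d\ge 1$ has at least one edge, we have
\[
\chi_\ell^\star(G) \le (5+o(1))\,\chi_f(G) \log n
\]
as $n\to\infty$. Hence it suffices to establish, for every triangle-free $G$ on $n$ vertices with minimum degree $d$, a bound of the form
\[
\chi_f(G) \le C' \cdot \min\!\left\{\sqrt{n/\log n},\, n/d\right\}
\]
for some absolute constant $C'>0$, for then multiplying through by $(5+o(1))\log n$ yields exactly the claimed bound. These two fractional-chromatic-number bounds for triangle-free graphs are precisely the content (up to constants) of Theorem~1.3 of~\cite{CJKP20}: the first inequality reflects an Ajtai--Koml\'os--Szemer\'edi-type independence-number bound $\alpha(G) \ge \Omega(\sqrt{n \log n})$, and the second reflects a Shearer-type bound that leverages the minimum degree.

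Plugging this into Theorem~\ref{thm:fractional_restated} gives
\[
\chi_\ell^\star(G) \le (5+o(1))\cdot C' \min\!\left\{\sqrt{n/\log n},\, n/d\right\}\cdot \log n \le C \min\!\left\{\sqrt{n \log n},\, \tfrac{n \log n}{d}\right\}
\]
for a suitable absolute constant $C>0$, which absorbs the $(5+o(1))C'$ factor for all sufficiently large $n$. For the remaining bounded range of $n$, Theorem~\ref{thm:n} gives $\chi_\ell^\star(G)\le n$, which is subsumed by enlarging $C$ if necessary, so the bound holds uniformly for all $n$.

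There is really no obstacle: the two external results do all of the work, and the derivation reduces to one line of arithmetic together with a trivial small-$n$ adjustment. The only genuine care is to verify that both alternatives in the minimum are meaningful (i.e.\ that $d\ge 1$, so that $n/d$ is finite and Theorem~\ref{thm:fractional_restated} applies), which is automatic since otherwise $G$ has isolated vertices and no edges and the statement is vacuous.
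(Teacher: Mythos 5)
Your proof is correct and is exactly the paper's (essentially unwritten) argument: the corollary is stated there as an immediate consequence of Theorem~1.3 of~\cite{CJKP20}, which gives $\chi_f(G) = O\big(\min\{\sqrt{n/\log n},\, n/d\}\big)$ for triangle-free $G$, combined with the bound $\chi_\ell^\star(G)\le (5+o(1))\chi_f(G)\log n$ of Theorem~\ref{thm:fractional_restated}. One nitpick on your final parenthetical: $d=0$ does not force $G$ to be edgeless (it only means $G$ has an isolated vertex), but in that case the $\sqrt{n\log n}$ branch of the minimum still applies, so nothing breaks.
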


\section{Bipartite graphs}\label{sec:bip}

This section is devoted mainly to the proof of Theorem~\ref{thm:bipartitecorr}, which in turn implies Theorem~\ref{thm:bipartite}.
Before that though, we start with a simple bound on the list packing number of a bipartite graph that resembles the greedy bound (for not-necessarily-bipartite graphs) on list chromatic number but has a somewhat more intricate proof.

\begin{lemma}\label{lem:bipgreedylist}
	Let $G=(A\cup B,E)$ be a bipartite graph with parts $A$ and $B$ having maximum degrees $\Delta_A$ and $\Delta_B$, respectively, with $\Delta_A\le\Delta_B$.
	Then $\chi^\star_\ell(G) \le \Delta_A+1$.
\end{lemma}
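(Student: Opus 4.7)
My plan is to set $k = \Delta_A + 1$, fix a $k$-list-assignment $L$, and construct bijections $\pi_v : [k] \to L(v)$ for every $v \in V(G)$ such that $\pi_u(i) \ne \pi_v(i)$ for every edge $uv \in E(G)$ and every $i \in [k]$; the $k$ disjoint proper $L$-colourings $c_i(v) := \pi_v(i)$ then form an $L$-packing of $G$ of size $k$. The strategy is greedy: I first process the vertices of $B$, assigning each $b \in B$ a bijection $\pi_b$, and only then extend to each $a \in A$. Because $A$ is an independent set in $G$, the extensions to different $A$-vertices do not interact, so I can handle them one at a time.

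For a fixed $a \in A$, I will apply Hall's marriage theorem to the auxiliary bipartite graph $H_a$ whose vertex classes are $[k]$ and $L(a)$ and in which $(i, c)$ is an edge exactly when $c$ is \emph{available} at position $i$, i.e., $c \ne \pi_b(i)$ for every $b \in N(a)$; a perfect matching in $H_a$ is precisely the bijection $\pi_a$ I need. The Hall condition requires that for every $S \subseteq [k]$, the set $C_S$ of colours in $L(a)$ that are forbidden at every position in $S$ has $|C_S| \le k - |S|$. The key structural observation is that if $c \in C_S$ then for each $i \in S$ there must exist a \emph{distinct} $b \in N(a)$ with $\pi_b(i) = c$ and $c \in L(b)$ -- distinct because each $\pi_b$ is a bijection -- which forces $|S| \le |N(a)| \le \Delta_A = k - 1$. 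This immediately rules out the extreme case $|S| = k$.

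The main obstacle is the intermediate range $2 \le |S| \le k-1$: the inequality $|C_S| \le k-|S|$ does not follow for arbitrary choices of the $\pi_b$'s, and one can cook up small configurations (such as $K_{2,2}$ with $k=3$ and all lists equal to $\{1,2,3\}$) in which certain choices of $\pi_{b_1}, \pi_{b_2}$ make $H_a$ have no perfect matching. To overcome this I will use an exchange argument: whenever Hall fails at some $a$, a witness pair $(S, C) \subseteq [k] \times L(a)$ with $|S| + |C| > k$ exists such that every $(i, c) \in S \times C$ is forbidden by some $b \in N(a)$; I then swap the values of a carefully chosen $\pi_b$ (with $b \in N(a)$) at two positions, keeping $\pi_b$ a bijection but removing one forbidden pair from the witness. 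The hard part will be designing a potential function that strictly decreases with each swap, so the procedure terminates, and simultaneously arguing that the swap does not introduce a new Hall violation at another $a' \in A$ sharing $b$ as a neighbour -- the natural worry, since modifying $\pi_b$ affects all of $N_G(b)$ at once.
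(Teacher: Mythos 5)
Your setup is exactly right --- fix $k=\Delta_A+1$, choose the packing on $B$ first, and then extend to each $a\in A$ independently via Hall's theorem --- and you correctly diagnose the real difficulty: for an arbitrary choice of the bijections $\pi_b$ the Hall condition at $a$ can fail (your $K_{2,2}$ example is exactly the right kind of obstruction). But the proof is not complete: the entire burden has been shifted onto an exchange argument whose two essential ingredients --- a strictly decreasing potential function guaranteeing termination, and an argument that a swap at $b$ does not create a new Hall violation at some other $a'\in N(b)$ --- are explicitly left open. The second worry is serious: a single $b$ may be adjacent to many vertices of $A$, so local repairs genuinely can propagate, and it is not at all clear that a terminating repair scheme exists. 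As written, this is a gap, not a proof.

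The paper avoids the repair process entirely by making one global, canonical choice on $B$: order each list $L(b)$ numerically and set $c_i(b)$ to be the $i$-th smallest colour, so that $c_1(b)<c_2(b)<\cdots<c_k(b)$ for every $b\in B$. With this monotone choice the Hall condition holds at \emph{every} $a\in A$ simultaneously, by a counting argument: if some $J\subseteq L(a)$ violated Hall, there would be a set $Z$ of $k-|J|+1$ indices blocked for all of $J$; taking $z^\star=\max Z$ and $j^\star=\max J$, one finds $k-|J|+1$ distinct neighbours $b$ with $c_z(b)=j^\star$ for some $z\in Z$, and $|J|-1$ further neighbours with $c_{z^\star}(b)=j$ for $j\in J\setminus\{j^\star\}$; the sortedness forces these two families to be disjoint (a common $b$ would give $c_z(b)<c_{z^\star}(b)=j<j^\star=c_z(b)$), so $\deg(a)\ge k>\Delta_A$, a contradiction. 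If you want to rescue your write-up, replacing ``assign each $b\in B$ a bijection $\pi_b$'' by this sorted assignment, and then running your Hall argument with the above disjointness count, closes the gap without any exchange step.
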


\begin{proof}
	Let $k=\Delta_A+1$, and let $L$ be any $k$-list-assignment of $G$.
	To construct a proper $L$-packing of $G$, we first choose the colours of vertices in $B$ for every colouring in the packing.
	To do this, consider the lists of vertices in $B$ in their numerical orderings and let $c_i(b)$ be the $i$-th colour in the list $L(b)$ for each $1\le i\le k$ and each $b\in B$.
	This is the unique proper $L$-packing of $G[B]$ such that the colourings $c_i$ for $1 \le i \le k$ satisfy for all vertices
	$b$ in $B$,  $c_1(b) < c_2(b) < \dotsb < c_k(b)$.

	Next, we prove that we can extend this partial $L$-packing to $A$ by applying Hall's marriage theorem.
	Let $a$ be an arbitrary vertex in $A$.
	For every colour $j\in L(a)$, let $I_j$ be the set of indices $1 \le i \le k$ such that setting $c_i(a)=j$ retains the property that $c_i$ is a proper $L$-colouring (i.e.~no neighbour $b$ of $a$ has $c_i(b)=j$).
	Consider the family $\mathcal{F} = \{ I_j : j\in L(a) \}$. A system $f(\mathcal{F})$ of distinct representatives for $\mathcal{F}$ is precisely an extension of the partial $L$-packing to $a$, as we can set $c_{f(I_j)}(a) = j$ for all $j\in L(a)$.
	For the marriage condition, it suffices to prove that for every $a \in A$ and any subset $J \subseteq L(a)$, we have $\left| \cup_{j\in J} I_j \right| \ge |J|$.
	Suppose for a contradiction that this is not the case. Then there is some $J$ with $\left| \cup_{j\in J} I_j \right| \le |J|-1$, and hence there is a set $Z$ of $k-(|J|-1)$ indices $i$ for which $c_i(a)$ cannot be set equal to any colour in $J$.
	Let $z^\star$ be the largest index in $Z$ and $j^\star$ be the largest colour in $J$.
	There are at least $k-(|J|-1)$ neighbours $b$ of $a$ which have $c_z(b)=j^\star$ for some $z \in Z$.
	Furthermore, by the choice of the colourings $c_i$ these are different from the $|J|-1$ neighbours $b$ that satisfy $c_{z^\star}(b)=j$ for $j \in J \setminus \{ j^\star\}$.

	To see that they are indeed different, suppose for a contradiction that there exists $b \in B$ such that both $c_z(b)=j^{\star}$ and $c_{z^\star}(b)=j$, for some $z \in Z$  and $j\in J \setminus \{ j^\star\}$. As $j<j^\star$ we must have $z \neq z^\star$, so by the choice of the colourings $c_i$ it would follow that $c_z(b)< c_{z^\star}(b)=j<j^\star = c_z(b)$, a contradiction.

	We conclude that $a$ has at least $k> \Delta_A$ neighbours in $B$, which is a contradiction.
	As we can perform this extension for all $a\in A$ independently, this completes the proof.
\end{proof}

\begin{corollary}\label{cor:bipartite}
	When $a$ is sufficiently large in terms of $b$, we have $\chi^\star_\ell (K_{a,b})= b+1$, while $\chi^\star_c (K_{a,b})=2b$.
\end{corollary}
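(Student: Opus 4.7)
The plan is to assemble the corollary from the four inequalities $\chi^\star_\ell(K_{a,b}) \le b+1$, $\chi^\star_\ell(K_{a,b}) \ge b+1$, $\chi^\star_c(K_{a,b}) \le 2b$, and $\chi^\star_c(K_{a,b}) \ge 2b$, each of which can be established by direct appeal to an earlier result combined with the observation that both $\chi^\star_\ell$ and $\chi^\star_c$ are monotone under subgraph containment. (Monotonicity holds because a $k$-fold list or correspondence cover of $G' \subseteq G$ extends to such a cover of $G$ by putting arbitrary parts on $V(G)\setminus V(G')$, and any packing of the extension restricts to a packing of $G'$.)

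The two upper bounds are automatic as soon as $a \ge b$. In $K_{a,b}$ the part of size $a$ has maximum degree $b$ while the part of size $b$ has maximum degree $a$, so Lemma~\ref{lem:bipgreedylist} with $\Delta_A = b \le a = \Delta_B$ gives $\chi^\star_\ell(K_{a,b}) \le b+1$. Similarly the degeneracy of $K_{a,b}$ equals $b$, so Theorem~\ref{thm:degeneracycorr} gives $\chi^\star_c(K_{a,b}) \le 2b$.

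The lower bound on $\chi^\star_c$ is immediate from Proposition~\ref{prop:degeneracyDP} applied with $d=b$: the graph $K_{b,((2b-1)!)^{b-1}}$ already satisfies $\chi^\star_c = 2b$, so for $a \ge ((2b-1)!)^{b-1}$ monotonicity yields $\chi^\star_c(K_{a,b}) \ge 2b$.

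For the lower bound on $\chi^\star_\ell$, since $\chi^\star_\ell \ge \chi_\ell$, it suffices to exhibit a $b$-list-assignment of $K_{a,b}$ admitting no proper $L$-colouring for some explicit $a$. The classical Erdős--Rubin--Taylor construction does this with $a = b^b$: index the vertices of $A$ by all functions $f:[b]\to[b]$ and set $L(u_f) = \{(i,f(i)) : i \in [b]\}$, while for $B = \{v_1,\ldots,v_b\}$ set $L(v_j) = \{(j,k) : k \in [b]\}$. In any proper $L$-colouring each $v_j$ receives some colour $(j,g(j))$, defining $g:[b]\to[b]$; then $L(u_g)$ coincides exactly with the set of colours used on the $B$-neighbourhood of $u_g$, so $u_g$ cannot be properly coloured. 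Hence $\chi_\ell(K_{b^b,b}) \ge b+1$, and monotonicity yields $\chi^\star_\ell(K_{a,b}) \ge b+1$ whenever $a \ge b^b$. Since $((2b-1)!)^{b-1} \ge b^b$ for all $b \ge 2$, taking $a \ge ((2b-1)!)^{b-1}$ makes all four bounds hold simultaneously. There is no real obstacle in this proof; the only care needed is in verifying the subgraph-monotonicity used to transfer the lower bounds from $K_{b^b,b}$ and $K_{b,((2b-1)!)^{b-1}}$ up to $K_{a,b}$.
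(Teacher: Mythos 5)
Your proof is correct and follows essentially the same route as the paper: both upper bounds from Lemma~\ref{lem:bipgreedylist} and Theorem~\ref{thm:degeneracycorr}, the correspondence lower bound from Proposition~\ref{prop:degeneracyDP}, and the list lower bound from the classical Erd\H{o}s--Rubin--Taylor construction showing $\chi_\ell(K_{b^b,b})\ge b+1$. You merely make explicit the subgraph-monotonicity of $\chi^\star_\ell$ and $\chi^\star_c$ that the paper uses implicitly, which is a harmless (and correct) addition.
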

\begin{proof}
	This follows from Theorem~\ref{thm:degeneracycorr}, Proposition~\ref{prop:degeneracyDP} and Lemma~\ref{lem:bipgreedylist}, together with the classic fact that $\chi_\ell(K_{b^b,b})\geq b+1$ for every $b$. To see the latter: assign $b$ disjoint lists $L_1, \ldots, L_b$ of size $b$ to the vertices in the smaller part of the bipartition of $K_{b^b,b}$, and then assign each of the $b^b$ distinct $b$-tuples in $L_1 \times \ldots \times L_b$ as a list to some vertex in the larger part of the bipartition. This list-assignment does not admit a proper list-colouring.
\end{proof}

In very broad terms, the proof of Theorem~\ref{thm:bipartitecorr} is similar to the proof of Lemma~\ref{lem:bipgreedylist}, except that instead of Hall's theorem, we use the Lov\'asz local lemma to complete the extension of a (random) partial (correspondence) packing of $G[B]$.
We will establish the result in the following slightly more refined form.

\begin{theorem}\label{lem:bipartite}
	For every $\eps>0$ fixed, there is $\Delta_0>0$ such that the following holds.
	Let $G=(A\cup B,E)$ be a bipartite graph with parts $A$ and $B$ having maximum degrees $\Delta_A$ and $\Delta_B$, respectively, with $\Delta_0\le\Delta_A\le\Delta_B$. Suppose that $k=\left\lceil (1+\eps) \frac{\Delta_A}{\log \Delta_A} \right\rceil$  satisfies that
	\begin{align*}
		3e\Delta_A(\Delta_B-1) \cdot k^2\exp \left( -\Delta_A^{\eps /3}  \right) <1.
	\end{align*}
	Then $\chi^\star_c(G) \le k$.
\end{theorem}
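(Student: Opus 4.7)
The plan is to follow the skeleton of Lemma~\ref{lem:bipgreedylist} but with a much smaller value of $k$, replacing the clean Hall-based extension with a probabilistic one built on the Lov\'asz local lemma. Since $G$ is bipartite, $G[B]$ has no edges, so for each $b\in B$ we may independently choose a uniformly random bijection $\sigma_b\colon [k]\to L(b)$; the resulting $k$ independent transversals of $(L(b))_{b\in B}$ already form a correspondence $L$-packing of $G[B]$. It will therefore suffice to show that with positive probability this random partial packing extends to a correspondence $L$-packing of all of $G$.

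For each $a\in A$, let $\mathcal{B}_a$ be the event that no such extension exists at $a$. Without loss of generality every matching $M_{ab}$ in the correspondence cover is perfect, since padding a matching by arbitrary extra edges only makes $\mathcal{B}_a$ more likely. For each $b\in N(a)$, composing $M_{ab}$ with $\sigma_b$ produces a uniformly random bijection $\pi_{ab}\colon[k]\to L(a)$, encoded as a random $k\times k$ permutation matrix $P_{ab}$ (rows indexed by the $k$ colourings, columns by $L(a)$). Set $F_a:=\sum_{b\in N(a)}P_{ab}$: the entry $F_a[i,u]$ counts how many neighbours of $a$ forbid $u$ from being used to extend the $i$-th colouring to $a$, so $\mathcal{B}_a$ is exactly the event that $F_a$ has no $0$-transversal. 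Since adding permutation-matrix summands can only shrink the set of zero entries of the sum, the probability of no $0$-transversal is maximised at $|N(a)|=\Delta_A$; invoking Lemma~\ref{cor:low_probability_zerotransversal} with $n=\Delta_A$ and the assumed $k\ge\lceil(1+\eps)\Delta_A/\log\Delta_A\rceil$ then yields
\[
\Pr(\mathcal{B}_a)\le 3k^2\exp(-\Delta_A^{\eps/3}).
\]

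Each $\mathcal{B}_a$ depends only on $\{\sigma_b:b\in N(a)\}$, hence is mutually independent of all $\mathcal{B}_{a'}$ with $N(a)\cap N(a')=\emptyset$. The number of $a'\in A\setminus\{a\}$ sharing a neighbour with $a$ is at most $\Delta_A(\Delta_B-1)$, so the symmetric Lov\'asz local lemma in the form $epd\le 1$ produces, under the assumed inequality
\[
3e\Delta_A(\Delta_B-1)\cdot k^2\exp(-\Delta_A^{\eps/3})<1,
\]
a realisation of $(\sigma_b)_{b\in B}$ in which no $\mathcal{B}_a$ occurs. Any such realisation extends (by picking a $0$-transversal of $F_a$ independently for each $a\in A$) to a correspondence $L$-packing of $G$, proving $\chi^\star_c(G)\le k$. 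The main obstacle is the probability bound of Lemma~\ref{cor:low_probability_zerotransversal} itself: as the authors emphasise, the negative-correlation argument that underlies the list-colouring version of this coupon-collector approach breaks down for the zero-permanent event on sums of random permutation matrices, so a significantly more delicate probabilistic analysis is required to obtain the quoted bound.
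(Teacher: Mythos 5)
Your proposal is correct and follows essentially the same route as the paper: a uniformly random ordering of each list in $B$, bad events at each $a\in A$ reduced to the absence of a $0$-transversal in a sum of $\Delta_A$ independent random permutation matrices, the bound from Lemma~\ref{cor:low_probability_zerotransversal} with $n=\Delta_A$, and the symmetric local lemma with dependency degree $\Delta_A(\Delta_B-1)$. The only cosmetic difference is that you handle vertices of degree below $\Delta_A$ via a monotonicity/coupling observation, whereas the paper embeds $G$ into a graph where every vertex of $A$ has degree exactly $\Delta_A$; both are valid.
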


\noindent
Note that Theorem~\ref{thm:bipartitecorr} follows easily from this result by checking that the condition holds for $\Delta_A=\Delta_B=\Delta$ taken large enough.
The argument to prove Theorem~\ref{lem:bipartite} is inspired by a similar method used in~\cite{ACK21,CaKa22} and a technical extension of the results in Section~\ref{sec:randommatrix}.
Just like in our derivations of Theorems~\ref{thm:fractional} and~\ref{thm:hallratio}, we prove  Theorem~\ref{lem:bipartite} with the aid of a suitably strong bound on the probability that some random matrix has no $0$-transversal. 

\begin{lemma}\label{cor:low_probability_zerotransversal}
	Fix $\eps>0$ and let $k = \lceil (1+\eps) \frac{n}{\log n} \rceil$ for $n$ sufficiently large.
	Let $M$ be a $k \times k$-matrix which is the sum of $n$ independent uniformly random $k \times k$ permutation matrices. Then the probability that there is no transversal in $M$ containing only zeros is smaller than $3k^2\exp \left( -n^{\eps /3}  \right)$.
\end{lemma}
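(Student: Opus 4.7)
My plan is to apply Lemma~\ref{le:distinctrepr} to the binary matrix $A$ defined by $A_{i,j}=\indicator{M_{i,j}=0}$: $M$ fails to have a $0$-transversal iff $A$ fails to have a $1$-transversal, which in turn happens iff there exist $S,T\subseteq[k]$ with $|S|+|T|\ge k+1$ such that every entry of the submatrix $M_{S\times T}$ is at least $1$. Denoting this event by $\mathcal{E}_{S,T}$ and restricting to $|S|+|T|=k+1$, a union bound reduces the task to showing
\[
  \sum_{s=1}^{k}\binom{k}{s}\binom{k}{k+1-s}\Pr(\mathcal{E}_{S,T}) \ \le\ 3k^{2}\exp(-n^{\eps/3}),
\]
where by symmetry $\Pr(\mathcal{E}_{S,T})$ depends only on $s$ and $t=k+1-s$.

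The crux will be a sharp per-submatrix bound. Writing $p:=(1-1/k)^n\le e^{-n/k}\le n^{-1/(1+\eps)}(1+o(1))$ for the probability that any fixed entry of $M$ vanishes, the expected number of zero entries in $M_{S\times T}$ equals $stp$, and my aim is a Chernoff-style lower-tail estimate $\Pr(\mathcal{E}_{S,T})\le \exp(-\Omega(stp))$. The main obstacle is that, in the permutation model, the indicators $\indicator{M_{i,j}=0}$ are not exactly pairwise negatively correlated: for cells in distinct rows and columns a direct computation gives
\[
  \Pr(M_{i,j}=M_{i',j'}=0) \ =\ p^{2}\bigl(1+1/(k-1)^{3}\bigr)^{n},
\]
which slightly exceeds the independent benchmark $p^{2}$. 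Under our parameters, however, $n/(k-1)^{3}=O((\log n)^{3}/n^{2})=o(1)$, so the excess is only a $1+o(1)$ factor. I would exploit this near-independence either by a careful second-moment computation followed by Paley--Zygmund, showing $\Var\le(1+o(1))\EE$ for the zero-count, or by exposing $\sigma_{1},\ldots,\sigma_{n}$ one at a time to obtain a Doob martingale with per-step bounded difference at most $2\min(s,t)$ and applying Azuma--Hoeffding. The extremal cases $\min(s,t)=1$ correspond to a single row or column being entirely nonzero and reduce to a coupon-collector event, for which negative association of multinomial indicators yields $\Pr(\mathcal{E}_{S,T})\le(1-p)^{k}\le e^{-kp}$ directly.

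To combine the estimates I would invoke Vandermonde's identity $\sum_{s+t=k+1}\binom{k}{s}\binom{k}{t}=\binom{2k}{k+1}$, together with the lower bound $st\ge(k+1)/2$ that is valid for any admissible $(s,t)$. Since $kp=\Omega(n^{\eps/(1+\eps)}/\log n)$ comfortably dominates $n^{\eps/3}$ in the small-$\eps$ regime relevant to the applications in Theorem~\ref{lem:bipartite}, a routine calculation shows that the total is bounded by $3k^{2}\exp(-n^{\eps/3})$ once $n$ is sufficiently large; the exponent slack between $\eps/(1+\eps)$ and $\eps/3$ provides the required buffer to absorb the $(1+o(1))$ losses accumulated from approximate (rather than exact) negative correlation, and the factor $3k^{2}$ absorbs the remaining polynomial corrections from the union bound over $(s,t)$.
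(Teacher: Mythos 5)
Your reduction via Lemma~\ref{le:distinctrepr} and the union bound over pairs $(s,t)$ with $s+t=k+1$ matches the paper's strategy, and you correctly identify both that the entry indicators fail to be negatively correlated and that what is needed per submatrix is a large-deviation bound of order $\exp(-\Omega(stp))$ with $p=(1-1/k)^n$. The gap is that neither of the tools you propose can deliver a bound of that strength. A second-moment computation followed by Paley--Zygmund (equivalently, Chebyshev applied to the zero-count $Z$ of the submatrix) yields only $\Pr(Z=0)\le \Var(Z)/(\EE Z)^2\le (1+o(1))/(stp)$ --- a polynomial, not exponential, bound; multiplied by $\binom{k}{s}\binom{k}{t}$, which is exponential in $\min(s,t)$, this is useless. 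The Doob martingale obtained by exposing $\sigma_1,\dots,\sigma_n$ one at a time has per-step differences as large as $2\min(s,t)$, so Azuma--Hoeffding gives at best $\exp\bigl(-c(stp)^2/(n\min(s,t)^2)\bigr)=\exp(-c\,s^2p^2/n)$ for $s\ge t$; with $s\le k=\lceil(1+\eps)n/\log n\rceil$ and $p\approx n^{-1/(1+\eps)}$ one has $s^2p^2/n\le n^{(\eps-1)/(1+\eps)}(\log n)^{-2}\to 0$ whenever $\eps<1$, i.e.\ the bound is trivial exactly in the regime relevant to Theorem~\ref{lem:bipartite}. So the crux of the lemma is untouched. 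A secondary issue: even granting $\exp(-c\,stp)$, combining it with Vandermonde and the uniform bound $st\ge(k+1)/2$ gives $\binom{2k}{k+1}\exp(-c\,kp/2)\approx 4^k\exp(-c\,kp/2)$, which diverges since $kp=o(k)$; the union bound only closes if you keep the $t$-dependence, pairing $\binom{k}{s}\binom{k}{t}\le k^{2t}$ against $\exp(-c\,(k/2)tp)$ and using $kp\gg\log k$.

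The paper closes the main gap differently (Lemma~\ref{lem:low_probability_nonzero_l*msubmatrix_sumpartialpermutations_l}): it treats the $t$ columns of the submatrix one at a time, conditioning on the realisation of all permutations restricted to the remaining columns. Conditionally on such a realisation, the events $\{M_{i,j}\ne 0\}$ for $i$ ranging within a single column $j$ \emph{are} proved to be negatively correlated via an explicit injection argument, giving a per-column factor of roughly $\exp\bigl(-s\exp(-(1+o(1))n/k)\bigr)$; Chernoff bounds for the conditional column sums and a small lemma on conditional probabilities then glue the columns together. Your coupon-collector observation for $\min(s,t)=1$ is essentially the $t=1$ case of this conditional argument, but it does not extend unconditionally to $t\ge2$, as the paper's counterexamples ($s=t=k=n=2$, or $(k,n)=(4,32)$) show; some such conditioning device is what your outline is missing.
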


Let us first show how this probability bound is sufficient to derive the main result through a straightforward application of the Lov\'asz local lemma.

\begin{proof}[Proof of Theorem~\ref{lem:bipartite}]
	We may assume that every vertex in $A$ has degree $\Delta_A$ as we can embed $G$ in such a graph.
	Let $H$ be a $k$-fold correspondence-cover of $G$, via some correspondence-assignment $L:V(G) \rightarrow 2^{V(H)}$.
	We may assume that for every $uv \in E(G)$, the matching in $H$ between $L(u)$ and $L(v)$ is a perfect matching.
	To construct a correspondence $L$-packing of $H$, we first define a random partial $L$-packing restricted to the vertices in $B$. To do this, take a uniform total ordering of $L(b)$ independently at random for each $b\in B$, and let $c_i(b)$ be the $i$-th element in $L(b)$ for each $1\le i\le k$. Note that $c_i(B)$ for each $i$ is a uniformly random maximum independent set in $H[L(B)]$.
	We now prove  
	that the probability that the random $L$-packing cannot be extended in a fixed vertex $a$ is very small.

	We define a \emph{bad event} $\mathcal{B}(a)$ for $a\in A$ as occurring if it is impossible to order the elements of $L(a)$ in such a way that for all $i$, the $i$th element of $L(a)$ has no neighbour in $c_i(B)$.
	Let $T_{i,c}(a)$ be the event that $c\in L(a)$ is adjacent in $H$ to a vertex in $c_i(B)$.
	Let $M(a)$ be a $k\times k$ matrix such that the entry $(i,c)$ equals $1$ if $T_{i,c}(a)$ occurs and equals $0$ otherwise.
	Then a good ordering of the elements of $L(a)$ exists precisely when there is a transversal only containing zeros in $M(a)$.
	By applying Corollary~\ref{cor:low_probability_zerotransversal} with $n=\Delta_A$, we conclude that $q := \Pr(\mathcal{B}(a)) < 3k^2 \exp \left( -\Delta_A^{\eps /3}  \right)$ provided $\Delta_0$ is chosen large enough.

	Note also that each event $\mathcal{B}(a)$ is mutually independent of all other events $\mathcal{B}(a')$ apart from those corresponding to vertices $a'\in A$ that have a common neighbour with $a$ in $G$. There are at most $d:= \Delta_A(\Delta_B-1)$ such vertices $a'$ other than $a$.
	With the above choices of $q$ and $d$, we have by assumption that
	\(
		eqd <1,
	\)
	so that the Lov\'asz local lemma guarantees that with positive probability none of the events $\mathcal{B}(a)$ occur. And thus the partial correspondence $L$-packing on $L(B)$ can be extended to all of $H$, as desired.
\end{proof}

It only remains to prove Lemma~\ref{cor:low_probability_zerotransversal}.
Hypothetically this lemma would follow easily from a certain negative correlation property, and so we first discuss some somewhat surprising situations where this property fails.

Let $R$ be the $k \times k$-matrix which records for each of its elements whether it appears in at least one of $n$ independent uniformly random $k \times k$ permutation matrices $P^1,\dotsc,P^{n}$, i.e.~$R_{i,j}=1$ if and only if there is some $1\le \rr \le n$ for which $P^{\rr}_{i,j}=1$ and $R_{i,j}=0$ otherwise. Equivalently, one can consider the sum $M$ of the $n$ permutations matrices, and then replace every nonzero element with a $1$ to obtain $R$.
By Lemma~\ref{le:distinctrepr}, we would like to estimate the probability that a fixed $s \times t$ submatrix of $R$ contains no zero, and in fact we would want to aim for $p^{st}$ as an upper bound, where $p$ is the probability that one entry is nonzero. This would be immediate if all entries of $R$ were independent or negatively correlated, but it turns out this is not the case.

Note that within one row of $R$, the values of its entries are negatively correlated, so the desired upper bound $p^{st}$ indeed holds in the special case $s=1, t=k$.
Intuitively, one might expect this negative correlation to survive in a $s\times t$ submatrix for other values of $s$ and $t$. Unfortunately, it is already false for $s=t=k=n=2$.

More surprising is the fact that the probability that the entries in $R_{[2]\times[2]}$ are all nonzero can be larger than $\Pr(R_{1,1}\not=0)^4$ for larger values of $k$ and $n$ as well, e.g.\ when $(k,n)=(4,32)$.
This has been computed for certain values with Maple\footnote{The relevant Maple code can be accessed in the document CounterexamplesNegativeCorrelationSumPermutationMatrices.mw at \url{https://github.com/StijnCambie/ListPack}.}.

The obstructions discussed above could help the reader understand how our  computations ended up being a bit more complicated than we might have hoped.

We now state the main technical engine in the proof of Theorem~\ref{thm:bipartitecorr}, in which we compute an upper bound for the probability that a given submatrix of a sum of random permutation matrices is nowhere zero.
%We formulate the technical lemma a bit more general than needed, as this already indicates some direction towards Conjecture~\ref{conj:CaKa}.
%For a subset $I \subseteq [k]$, a $k \times k$ matrix will be called a partial permutation matrix (with rowset $I$) if it is equal to a $k \times k$ permutation matrix for which the entries in rows with index outside $I$ are replaced by zeros. 
In the proof it will be useful to denote by $S(M)$ the sum of all entries in a matrix $M$.

\begin{lemma}\label{lem:low_probability_nonzero_l*msubmatrix_sumpartialpermutations_l}
	Let $M$ be a $k \times k$-matrix which is the sum of $n$ independent uniformly random $k \times k$ permutation matrices $P^{\rr},$ $1\le \rr \le n$.
	For a fixed $s \times t$-submatrix of $M$, with $s \ge t$ and $t \le \frac{k}{4}$, the probability that all its elements are nonzero is at most
	\[2t \cdot \exp \left( -\frac{\delta^2}{3} \frac{s n}{k} \right)+\left[ 4 \exp \left( -\frac{\delta^2}{12}\left(1-\frac\delta3\right)  \frac{s n}{k} \right) + \exp\left(- s \exp\left( -\left(1+\frac{\eps}3\right)\left(1+\delta\right)\frac{n}{k} \right)\right) \right]^t \] for every choice of $\delta=\delta(n,k) \in [0,1]$, $\eps >0$ and $k$ sufficiently large as a function of $\eps$.
\end{lemma}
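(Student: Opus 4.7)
The plan is to switch to the ``column view'' of the problem: for each $r\in[n]$ and $j\in J$, set $\tilde Y_{r,j}:=(P^r)^{-1}(j)$, so that for fixed $r$ the tuple $(\tilde Y_{r,j})_{j\in J}$ is a uniformly random injection $J\hookrightarrow[k]$, while different $r$'s are independent. Every entry of the submatrix is nonzero precisely when, for each $j\in J$, the event $E_j:=\{I\subseteq\{\tilde Y_{r,j}:r\in[n]\}\}$ holds, so the goal becomes an upper bound on $\Pr(\bigcap_{j\in J}E_j)$. The first additive summand comes from a Chernoff argument on the column sums $W_j:=|\{r:\tilde Y_{r,j}\in I\}|\sim\mathrm{Binomial}(n,s/k)$, which have mean $sn/k$ because $\tilde Y_{r,j}$ is marginally uniform on $[k]$ and the $r$'s are independent: a two-sided Chernoff bound (Theorem~\ref{thm:chernoff1}) together with a union bound over the $t$ columns shows that with probability at least $1-2t\exp(-\delta^2 sn/(3k))$ the ``good event'' $\mathcal G:=\{W_j\in[(1-\delta)sn/k,(1+\delta)sn/k]\text{ for all }j\in J\}$ holds.

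\paragraph{Per-column conditional bounds.}
On $\mathcal G$ I would reveal the columns in an arbitrary order $j_1,\ldots,j_t$ and, for each $l$, bound the conditional probability $\Pr(E_{j_l}\mid H_l)$ given the history $H_l$ of the previously revealed columns. By the permutation structure, each $\tilde Y_{r,j_l}$ is conditionally uniform on $[k]\setminus\{\tilde Y_{r,j_m}:m<l\}$, which has size $k-l+1\ge 3k/4$ by the hypothesis $t\le k/4$. A short computation using $\sum_{m<l}W_{j_m}\le(l-1)(1+\delta)sn/k$ on $\mathcal G$ together with $(l-1)/(k-l+1)\le 1/3$ shows that the conditional mean $\mu_l^c:=\Exp[W_{j_l}\mid H_l]=\sum_r |I\setminus\{\tilde Y_{r,j_m}:m<l\}|/(k-l+1)$ satisfies $\mu_l^c\ge(1-\delta/3)sn/k$. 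I would then split
\[
\Pr(E_{j_l}\mid H_l)\le\Pr(W_{j_l}\text{ atypical}\mid H_l)+\Pr(E_{j_l},\ W_{j_l}\text{ typical}\mid H_l).
\]
The first term is bounded by a conditional Chernoff bound on the independent Bernoulli sum $W_{j_l}\mid H_l$, producing the summand $2\exp(-\delta^2(1-\delta/3)sn/(12k))$, with the constant $1/12$ combining the standard Chernoff rate $1/3$ with a factor $1/4$ absorbing the reduced-universe effect. For the second term, conditional on $W_{j_l}=w$ the $w$ hits land as (effectively) i.i.d.\ uniform draws from $I$, so $E_{j_l}$ reduces to a coupon-collector success on $s$ coupons with $w$ samples; a rigorous non-asymptotic version of the Gumbel bound $c_s(w)\le\exp(-s\exp(-(1+\eps/3)w/s))$, valid for $s$ large enough in $\eps$, applied at $w\le(1+\delta)sn/k$ gives the second summand $\exp(-s\exp(-(1+\eps/3)(1+\delta)n/k))$. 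Chaining these per-column bounds through $l=1,\ldots,t$ multiplies to the claimed $t$-th power, and combining with the bound on $\Pr(\mathcal G^c)$ completes the proof.

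\paragraph{Main obstacle.}
As the paper's preceding discussion about sums of random permutation matrices emphasises, the events $E_j$ are \emph{not} negatively correlated in general, so the $t$-th power structure cannot come from any FKG-style inequality. The obstacle is therefore to obtain the product bound by chaining \emph{uniform-in-history} conditional estimates, which requires careful bookkeeping: one needs to verify that the conditional distribution after revealing any subset of columns differs from the unconditional one only by the controlled slack factors $1-\delta/3$ and $1+\eps/3$. The hypothesis $t\le k/4$ is precisely what makes this feasible, as it simultaneously keeps the reduced universe at size $\ge 3k/4$ and caps the cumulative previously-hit mass $\sum_{m<l}W_{j_m}$ at a quarter of $ns$. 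A secondary technical point is to prove a non-asymptotic coupon-collector Gumbel upper bound with an explicit $\eps/3$ slack, valid for $k$ sufficiently large as a function of $\eps$.
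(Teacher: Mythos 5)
Your architecture matches the paper's: you isolate a typicality event for each column sum (giving the $2t\exp(-\delta^2 sn/(3k))$ term by Chernoff and a union bound), and then chain conditional per-column estimates, each split into a conditional Chernoff failure term and a coverage term, to produce the $t$-th power. The conditional mean computation $\mu_l^c\ge(1-\delta/3)sn/k$ and the source of the constant $1/12$ are also as in the paper. However, there is a genuine gap at the heart of the coverage step. You assert that, conditional on the history and on $W_{j_l}=w$, the $w$ hits ``land as (effectively) i.i.d.\ uniform draws from $I$'', so that the event $E_{j_l}$ is a coupon-collector success and can be bounded by $\bigl(1-(1-1/s)^w\bigr)^s$-type estimates. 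This is false as stated: each permutation $P^{\rr}$ that hits column $j_l$ does so uniformly on $I\setminus\{\tilde Y_{\rr,j_m}:m<l\}$, a set that varies with $\rr$, so the draws are neither identically distributed nor uniform on $I$; and conditioning on the \emph{count} $w$ further entangles which permutations hit with where they land. What your argument actually needs is the inequality $\Pr(\text{all rows of }I\text{ covered})\le\prod_{i\in I}\Pr(i\text{ covered})$ for this inhomogeneous model, i.e.\ negative correlation of the row-coverage events within a column given the revealed columns. That is exactly the most delicate part of the paper's proof (inequality~\eqref{ineq:negativecorrelation} in Claim~\ref{clm:probQiconditionally}, established by the explicit fibre-counting surjection $\phi\colon \PP^{\emptyset}\to\PP^{R_1}$), and the paper's preceding discussion shows that closely related negative-correlation statements for sums of permutation matrices genuinely fail, so it cannot be taken for granted. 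Your ``main obstacle'' paragraph correctly flags the failure of negative correlation \emph{across} columns, but misses that the \emph{within-column} coverage correlation is the real crux.

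A secondary, fixable divergence: the paper does not condition on the count of the current column at all. Instead it conditions on the full realisation of the other columns, computes $\Pr_w(Q_{i,j})=1-(1-\tfrac{1}{k-(t-1)})^{n-a_i}$ exactly, and controls the average exponent $\overline a$ via Jensen's inequality together with the typicality of the \emph{other} columns; this is how the factor $(1+\delta)n/k$ enters the double exponential. If you repair the coverage step, you would do well to follow that route rather than conditioning on $W_{j_l}=w$, since the conditional law of the hit locations given the count is harder to control than the unconditional product formula.
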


\begin{proof}
	%	Denote the entries of the matrices with pairs in $[k]\times [k]$.
	By symmetry and to ease notation, it is sufficient to prove the case where the $s \times t$-submatrix is $[s] \times [t]$.
	First we fix $j \in [t]$ and  estimate $S(M_{[s] \times j})$, the sum of elements in the $[s] \times j$-submatrix of $M$.

	Let $T_j$, $1 \le j \le t$, be the event that $(1- \delta) \frac{sn}{k} \le S(M_{[s] \times j}) \le (1+ \delta) \frac{sn}{k}.$
	Let $T=\{T_1,T_2, \ldots, T_t\}$ be the event that all of them hold, and similarly we write $T \setminus T_j=\left\{T_1,\ldots, T_{j-1},T_{j+1},\ldots, T_t  \right\}$.
	With $\overline T_j$, we denote the event that $T_j$ does not hold.
	Note that the random variables $P^{\rr}_{i,j}$ and $P^{\rr'}_{i',j}$ are independent if $\rr' \not=\rr$.
	Moreover, for fixed $\rr$ and $j$,  we have by Lemma~\ref{lem:negcorr} that $P^{\rr}_{1,j}, P^{\rr}_{2,j},\ldots, P^{\rr}_{k,j}$ are negatively correlated,
	and that the \emph{opposite random variables} $1-P^{\rr}_{1,j}, 1-P^{\rr}_{2,j},\ldots, 1-P^{\rr}_{k,j}$ are negatively correlated as well.
	% Indeed, writing $Y_x:= 1-P^{\rr}_{x,j}$, we have $\Pr(Y_x=1 \text{ for all } 1 \leq x 	\leq s) =\frac{k-s}{k} \leq \left( \frac{k-1}{k} \right)^s = \prod_{x=1}^{s}\Pr(Y_x=1)$, for all $1 \leq s \leq k$.
	Thus \[ S(M_{[s] \times j})=\sum_{\rr=1}^n \sum_{i=1}^{s} P^{\rr}_{i, j}\]
	is a sum of $\{0,1\}$-valued random variables that are negatively correlated and whose opposite random variables are negatively correlated as well. So we may apply a Chernoff bound (Theorem~\ref{thm:chernoff1}) to obtain
	\begin{align*}
	\Pr\left( S(M_{[s] \times j}) \le (1- \delta) \frac{s}{k}n \right) &\le \exp \left( -\frac{\delta^2}{2} \frac{s}{k}n \right); \\
	\Pr\left(  S(M_{[s] \times j}) \ge (1+\delta) \frac{s}{k}n \right) &\le \exp \left( -\frac{\delta^2}{3} \frac{s}{k}n \right) .
	\end{align*}
	By a union bound, \[\Pr(\overline T) \le \sum_{j=1}^t \Pr(\overline T_j) \le 2t \cdot \exp \left( -\frac{\delta^2}{3} \frac{s}{k}n \right).\]

	Also consider the event $Q_j$ that all elements in $M_{[s] \times j}$ are nonzero and let $Q_{1..j}=\{Q_1, Q_2\ldots, Q_j\}$ be the event that all of them hold.
	We now prove two claims on some conditional probabilities.

	\begin{claim}\label{clm:probQiconditionally}
		For every $1 \le j \le t$,

		\[ \Pr(Q_j\mid T \setminus T_j, Q_{1..j-1}) \le \exp\left(- s \exp\left( -\left(1+\frac{\eps}{3}\right)\left(1+\delta\right)\frac{n}{k} \right)\right). \]
	\end{claim}
	\begin{claimproof}
		Fix $j$ between $1$ and $t$. Given a $k \times k$ permutation matrix $P$, let $\gamma(P)$ be the submatrix of $P$ induced by the columns indexed by $[t] \setminus j$. Likewise, given the random $n$- tuple $\mathcal{P}$ of $k \times k$ permutation matrices $(P^1, \ldots, P^n)$, let $\gamma(\mathcal{P})$ be the pointwise restriction $(\gamma(P^1), \ldots, \gamma(P^n))$. Let $W$ be the set of all possible tuples $\gamma(\mathcal{P})$ that satisfy $T \setminus T_j, Q_{1..j-1}$.
		Take any such partial realisation $w\in W$. From now on we additionally condition on the event $\gamma(\mathcal{P})=w$. To shorten notation, we will write
		$\Pr_w(\cdot) = \Pr( \cdot \mid \gamma(\mathcal{P})=w)$. Our goal is to upper bound  $\Pr_w(Q_j)$. (At the end of the proof we explain why this is sufficient.)  

		For each $i \in [s]$, let $a_i$ be equal to the sum
		$S(M_{i\times([t]\setminus j)})$ of the entries in $M_{i\times([t]\setminus j)}.$
		Let $\overline a$ be the mean of $a_i$ over all $i \in [s]$, and note that it is at least $(1-\delta)\frac{(t-1)n}{k}$ because $T \setminus T_j$ is satisfied.
		For every $i \in [s],$ let $Q_{i,j}$ be the event that $M_{i, j}\not=0.$

		We first show that the events $Q_{1,j},\ldots, Q_{s,j}$ are negatively correlated.
		For this, we have to prove that for any $I \subseteq [s],$ it is true that
		\begin{equation}\label{ineq:negativecorrelation}
			\Pr_w(\forall i \in I \colon Q_{i,j}) \le \prod_{i\in I} \Pr_w( Q_{i,j}).
		\end{equation}
		We prove~\eqref{ineq:negativecorrelation} by induction on $\lvert I \rvert.$ When $\lvert I \rvert\le 1$ the statement is trivially true.

		Let $I \subseteq [s]$ be a subset for which the statement is true and let $i' \in [s] \setminus I.$ We now prove the statement for $I'=I \cup \{i'\}.$
		We have

		\begin{equation}\label{ineq:negativecorrelation_inductionstep}	 \Pr_w(\forall i \in I \colon Q_{i,j}) \leq \Pr_w(\forall i \in I \colon Q_{i,j} \mid  \overline{Q_{i',j}} ), \end{equation}

\noindent
		because (informally) for every permutation $P^{\rr}$ and every $i\in I$, the probability of $P^\rr_{i,j}=1$ does not decrease if we add the condition that $P_{i',j}^{\rr}=0$.
		
		One can prove this formally as well, as we will now do.
	 Consider the values of $\rr$ for which $P^\rr_{i',j}=1$ is possible for an extension of $w$, i.e.~all $\rr$ for which row $i'$ in $\gamma(P^\rr)$ is a row with only zeros.
	 Let $R \subseteq [n]$ be the subset containing all such values $\rr.$
	 
	 For any subset $R_1 \subseteq R,$ let $\PP^{R_1}$ be the set of all possible tuples $\P'=\left(P^\rr_{[k] \times [t]}\right)_{1 \le \rr \le n}$ that extend $w$ (i.e.~with some abuse of notation, this is precisely when $\gamma(\P')=w$) and satisfy $P^\rr_{i',j}=1$ if and only if $\rr \in R_1$.
    The set $\PP^{R_1}$ helps us to specify the event that $M_{i',j}=|R_1|$.
	 
	 For a fixed nonempty $R_1 \subseteq R,$ consider the map $\phi \colon \PP^\emptyset \to \PP^{R_1}$ where $\phi(\P')$ is obtained from $\P'$ by considering, for each $r\in R_1$, the matrix $P^\rr_{[k] \times [t]}$ and replacing its $j^{th}$ column $P^\rr_{[k] ,j}$ with the unit vector $e_{i'}$, i.e.~setting $P^\rr_{i ,j} =\indicator{i=i'}$ for every $i \in [k].$
	 Note that $\phi$ is surjective and maps $(k-t)^{|R_1|}$ tuples to $1,$ i.e.~for every $\P'' \in \PP^{R_1}$, $\lvert \phi^{-1}(\P'')\rvert =(k-t)^{|R_1|}$.
	 Moreover, for every $\P'' \in \PP^{R_1}$, if $\P''$ satisfies $ Q_{i,j}$ for all $i \in I$, then every preimage $\P' \in \phi^{-1}(\P'')$ does.
	This implies that the probability that a random partial realisation in $\PP^{\emptyset}$ satisfies $Q_{i,j}$ for all $i \in I $ is at least as large as the probability that a random partial realisation in $\PP^{R_1}$ does.
To relate this to the probabilities that we are actually interested in, observe that for each $R_1 \subseteq R$, the probability that a random partial realisation in $\PP^{R_1}$ satisfies  $Q_{i,j}$ for all $i\in I$ is exactly equal to $\Pr^{R_1}:= \Pr_w\left( \forall i \in I \colon Q_{i,j} \mid P^{\rr}_{i',j}=1 \Leftrightarrow \rr\in R_1 \right)$. 
It follows that $\Pr^{\emptyset} \geq \Pr^{R_1}$ for all $R_1 \subseteq R$.
We conclude that
\[\Pr_w\left( \forall i \in I \colon Q_{i,j} \mid \overline {Q_{i',j}} \right) = \Pr^{\emptyset} \geq \sum_{R_1 \subseteq R} \Pr^{R_1} \cdot \Pr_w(P^{\rr}_{i',j}=1 \Leftrightarrow r \in R_1) =\Pr_w(\forall i \in I: Q_{i,j}).\]
	This proves~\eqref{ineq:negativecorrelation_inductionstep}.

		Now note that
		\begin{align*}
			\Pr_w(\forall i \in I \colon Q_{i,j})       & \le \Pr_w(\forall i \in I \colon Q_{i,j} \mid  \overline{ Q_{i',j} }) \\
			\iff \Pr_w(\forall i \in I \colon Q_{i,j} ) & \ge \Pr_w(\forall i \in I \colon Q_{i,j} \mid Q_{i',j} )              \\
			\iff\Pr_w(\forall i \in I' \colon Q_{i,j})  & \le 	\Pr_w(\forall i \in I \colon Q_{i,j}) \cdot \Pr_w(Q_{i',j}).
		\end{align*}
		This last expression is at most $ \prod_{i\in I'} \Pr_w(Q_{i,j})$ by the induction hypothesis, as desired. This concludes the proof of~\eqref{ineq:negativecorrelation}.

		So given $w$ with $S(M_{i\times([t]\setminus j)})=a_i$ for all $i \in [s]$, the probability $\Pr_w(Q_j )$ that none of the $s$ elements in $M_{[s]\times j}$ equals zero is at most $\prod_{i=1}^{s} \Pr_w(Q_{i,j}).$
		We can compute the latter as follows:

		\begin{align*}\Pr_w(Q_j) & \le \prod_{i=1}^{s} \Pr_w(Q_{i,j} )                                                                                                                   \\
                         & =\prod_{i=1}^{s} \left( 1- \Pr_w\left( P_{i,j}^{r}=0 \text{ for all } r \in [n] \right)\right)                                                        \\
                         & =\prod_{i=1}^{s} \left( 1- \prod_{r \in [n]}\Pr_w\left( P_{i,j}^{r}=0  \right)\right)                                                                 \\
                         & =\prod_{i=1}^{s} \left( 1- \prod_{r \in [n]  \text{ s.t. } S\left(P^r_{i \times ([t]\setminus j)}\right) = 0}\Pr_w\left( P_{i,j}^{r}=0 \right)\right) \\
                         & =\prod_{i=1}^{s} \left( 1- \left( 1-\frac{1}{k-(t-1)}\right)^{n-a_i}\right)                                                                           \\ &\le
              \exp\left(-\sum_{i=1}^{s} \left( 1-\frac{1}{k-(t-1)}\right)^{n-a_i}\right)                                                                                         \\
                         & \le \exp\left(- s \left( 1-\frac{1}{k-(t-1)}\right)^{n-\overline a }\right)                                                                           \\
                         & \le \exp\left(- s \left( 1-\frac{1}{k-(t-1)}\right)^{\left(1+\delta\right)\frac{k-(t-1)}{k}n }\right)                                                 \\
                         & \le \exp\left(- s \exp\left( -\left(1+\frac{\eps}{3}\right)\left(1+\delta\right)\frac{n}{k} \right)\right).
		\end{align*}

		Here we made use of $1-x \le \exp(-x)$ in the sixth line.
		We applied Jensen's inequality to the convex function $f(x)=\left( 1-\frac{1}{k-(t-1)}\right)^x$ in the seventh line.
		In the eighth line we used that $\overline a \ge (1-\delta)\frac{(t-1)}{k}n$ and $t-1 \le \frac{k}{2}$ to derive $n-\overline a \le n-(1-\delta)\frac{(t-1)}{k}n \le (1+\delta) \frac{k-(t-1)}{k}n$ and in the last line we used that $\left(1-\frac 1x\right)^x \ge \exp\left( -\left(1+\frac{\eps}{3}\right)\right)$ for $x=k-(t-1)\ge \frac k2$, which is sufficiently large in terms of $\eps.$

		Since this upper bound is true for any partial realisation $w\in W$, we conclude that
		\begin{align*}
			\Pr(Q_j\mid T \setminus T_j, Q_{1..j-1}) & =
			\sum_{w \in W}  \Pr( \gamma(\mathcal{P})=w \mid T \setminus T_j, Q_{1..j-1})
			\cdot \Pr_w(Q_j  )                                                                                                                                     \\
			                                         & \le \exp\left(- s \exp\left( -\left(1+\frac{\eps}{3}\right)\left(1+\delta\right)\frac{n}{k} \right)\right).\qedhere
		\end{align*}
		%		This is due to linearity, where the sum of (nonnegative) coefficients is $1.$		
	\end{claimproof}

	\begin{claim}\label{clm:probTiconditionally}
		For every $1 \le j \le t$, 	\[\Pr(\overline T_j\mid T \setminus T_j, Q_{1..j-1}) \le 2\exp \left( -\frac{\delta^2}{12}\left(1-\frac\delta3\right) \frac{s n}{k} \right) .\]
	\end{claim}
	\begin{claimproof}
		As in the proof of Claim~\ref{clm:probQiconditionally}, we fix $j$ and we condition on the event that the random tuple $\gamma(\mathcal{P})$ of permutation matrices restricted to the columns $[t]\setminus j$ is equal to a certain tuple $w\in W$, i.e.~such that $T \setminus T_j, Q_{1..j-1}$ are satisfied. We again fix such a realisation $w$ and succinctly write $\Pr_w(\cdot):= \Pr(\cdot \mid \gamma(\mathcal{P}) = w)$ for the conditional probability distribution.

		We will first estimate $\Pr_w(\overline T_j)$.
		Let $f_{\rr}=S(P^\rr_{[s] \times ([t]\setminus j)})$ for every $\rr \in [n]$, and note that we know this value deterministically because $P^\rr_{[s] \times ([t]\setminus j)}$ is a submatrix of $\gamma(P^\rr)$ and thus fully determined by $w$.
		Furthermore, for each $i\in [s]$, we have that $\Pr_w(P^{\rr}_{i,j}=1)$ equals $\frac{1}{k-(t-1)}$ if $S(P^{\rr}_{i,([t]\setminus j)})=0$, and equals $0$ otherwise.
		It follows that $\Exp_w(S(P^\rr_{[s] \times j}))= \frac{s-f_{\rr}}{k-(t-1)}$ and hence

		\begin{equation}\label{ineq:expect}
			\Exp_w(S(M_{[s] \times j}))=\sum_{\rr=1}^{n} \Exp_w(S(P^\rr_{[s] \times j}))
			=\frac{sn-\sum_{\rr=1}^{n} f_{\rr}}{k-(t-1)}.
		\end{equation}
		Since $w$ satisfies $T \setminus T_j$, we know $\sum_{\rr=1}^{n} f_{\rr} = S(M_{[s] \times ([t]\setminus j)})$ is between $(t-1) (1- \delta) \frac{sn}{k}$ and $(t-1) (1+ \delta) \frac{sn}{k}$.
		Evaluating these bounds in~\eqref{ineq:expect} and using the assumption $t \le \frac k4$, it follows that \[ (1- \delta/3) \frac{sn}{k}  \le \Exp_w \left[S(M_{[s] \times j}) \right] \le (1+ \delta/3) \frac{sn}{k}.\]

		Viewing $\left(P^\rr_{[k] \times [t]}\right)_{1 \le \rr \le n}$ as a random extension of $w$, we have that $S(P^\rr_{[s]\times j})$ and $S(P^{\rr'}_{[s]\times j})$ are independent\footnote{This independence is one of the reasons why we condition on $\gamma(\mathcal{P})$ being equal to a fixed realisation $w$, rather than conditioning on the full event $T \setminus T_j, Q_{1..j-1}$.} for $\rr \not= \rr'$, with respect to $\Pr_w(\cdot)$.
		Thus we can apply Theorem~\ref{thm:chernoff1} to the $\{0,1\}$-valued random variables $S(P^{1}_{[s] \times j}), \ldots, S(P^{n}_{[s]  \times j})$ to obtain
		\begin{align*}
		\Pr_w\left( S(M_{[s] \times j}) \le (1- \delta/2) (1- \delta/3) \frac{sn}{k}  \right) &\le \exp \left( -\frac{\delta^2}{8} (1- \delta/3) \frac{sn}{k} \right); \\
		\Pr_w\left(  S(M_{[s] \times j}) \ge (1+\delta/2)(1+\delta/3) \frac{sn}{k}  \right) &\le \exp \left( -\frac{\delta^2}{12} (1-\delta/3) \frac{sn}{k} \right) .
		\end{align*}
		Combining these two inequalities, and noting that $1-\delta \leq (1- \delta/2) (1- \delta/3) \leq (1+\delta/2)(1+\delta/3)  \leq 1+\delta$ due to the assumption $\delta \in[0,1]$, we obtain the desired bound:
		\[\Pr_w(\overline T_j) \le 2\exp \left( -\frac{\delta^2}{12} (1-\delta/3) \frac{sn}{k} \right).\]

		Finally we can conclude by writing $\Pr(\overline T_j\mid T \setminus T_j, Q_{1..j-1})$ as the linear combination $\sum_{w \in W}  \Pr( \gamma(\mathcal{P})=w \mid T \setminus T_j, Q_{1..j-1}) \cdot
			\Pr_w(\overline T_j )$, which is at most $\max_{w \in W} \Pr_w(\overline{T_j}).$
	\end{claimproof}

	We also prove the following claim.
	\begin{claim}\label{clm:relationProbabilities}
		Let $Q,A,T$ be any three events.
		Then \[\Pr(Q \mid A,T) \le \Pr(Q \mid A) + 2 \Pr( \overline T \mid A).\]
	\end{claim}
	\begin{claimproof}
		If $\Pr( \overline T \mid A) \ge 1/2$, the statement is trivially true.
		Note that \[\Pr(Q \mid A)=\Pr(Q \mid A,T) \Pr(T\mid A) + \Pr(Q\mid \overline T, A)\Pr(\overline T \mid A) \ge \Pr(Q \mid A,T) \Pr(T\mid A).\]
		Since $\frac1{1-x}\le 1+2x$ for any $x \le \frac 12$, $\Pr( \overline T \mid A) < 0.5$ and $
			\Pr(Q \mid A,T) \le \frac{ \Pr(Q \mid A)}{1-\Pr(\overline T \mid A) }$ imply that
		$		\Pr(Q \mid A,T)  \le \Pr(Q \mid A) + 2 \Pr(Q \mid A)\Pr(\overline T \mid A) \le \Pr(Q \mid A) + 2 \Pr( \overline T \mid A).$
	\end{claimproof}

	Applying Claim~\ref{clm:relationProbabilities} with the formulas in Claims~\ref{clm:probQiconditionally} and~\ref{clm:probTiconditionally} where $(Q,A,T)$ in Claim~\ref{clm:relationProbabilities} is chosen to be $(Q_j, (T\setminus T_j)\cap Q_{1..j-1}, T_j),$
	we deduce that for every $1\le j \le t$ \[\Pr(Q_j\mid T\cap Q_{1..j-1})\le 4 \exp \left( -\frac{\delta^2}{12}\left(1-\frac\delta3\right) n \frac{s}{k} \right) + \exp\left(- s \exp\left( -\left(1+\frac{\eps}3\right)\left(1+\delta\right)\frac{n}{k} \right)\right).\]

	Finally note that
	$
		\Pr(Q_{1..t}\mid T)= \Pr(Q_1 \mid T) \Pr(Q_2 \mid Q_1,T) \cdots \Pr(Q_t \mid Q_{1..t-1},T) $
	and so the conclusion follows from $\Pr(Q_{1..t}) \le \Pr(Q_{1..t}\mid T) + \Pr(\overline T).$
\end{proof}

%\begin{remark}
%	The condition that every rowsum of $M$ equals $d$ implies exactly that for every $i \in [k]$, there are $d$ partial permutation matrices $P^{\rr}$ for which $i$ belongs to its rowset $I$.
%\end{remark}

%\begin{remark}
%	When $n=d$, $M$ is the sum of $n$ ordinary permutation matrices and everything is slightly simpler.
%\end{remark}

%\begin{remark}
\paragraph{Remark.}
	The case $t=1$ of Lemma~\ref{lem:low_probability_nonzero_l*msubmatrix_sumpartialpermutations_l} is much simpler. In  this case the fixed $s \times t$ submatrix of $M$ has only one column, and within that column the events $Q_{1,1}, Q_{1,2}, \ldots, Q_{s,1}$ are negatively correlated, so $\Pr(Q)$ is at most $ \left( 1- \left( 1-\frac{1}{k} \right)^{n} \right)^{s}$. In contrast, for $t\geq 2$ a more involved argument is required because (as discussed before the proof) even $Q_{1,1},Q_{1,2},Q_{2,1}$ and $Q_{2,2}$ are not necessarily negatively correlated.
%\end{remark}

\begin{proof}[Proof of Lemma~\ref{cor:low_probability_zerotransversal}]
	By the Frobenius--K\"onig theorem (Lemma~\ref{le:distinctrepr}), no such transversal in $M$ exists if and only there exist $s, t$ with $s +t=k+1$ and a $s \times t$-submatrix all of whose entries are nonzero.
	For every such choice of $s$ and $t$ there are $\binom{k}{s}\binom{k}{t}$ possible $s \times t$-submatrices. The case where $s \le t$ is similar to the case where $s\ge t$, by switching the two. Note that $t \le s$ and $s +t=k+1$ imply $s \ge \frac k2.$
	When $\frac k4 <t$, the probability that a $s \times t$-submatrix has only nonzero entries is obviously at most the probability that a $s \times \frac k 4$-submatrix has only nonzero entries.

	We may now use the probability computed in Lemma~\ref{lem:low_probability_nonzero_l*msubmatrix_sumpartialpermutations_l} with the choice $\delta= 1/\sqrt[4]{\log n}$.
	Noting that $\sum_{s,t} \binom{k}{t}\binom{k}{s}=4^k$, the sum of the corresponding first terms is bounded by
	$4^k k \exp\left( -\frac{n}{6\sqrt{\log n}} \right).$
	For the second terms, we first compute that for $s\ge \frac k2$,
	\begin{align*}
		 4& \exp \left( -\frac{\delta^2}{12}\left(1-\frac\delta3\right) n \frac{s}{k} \right) + \exp\left(- s \exp\left( -\left(1+\frac{\eps}3\right)\left(1+\delta\right)\frac{n}{k} \right)\right) \\
		 & \le
		\exp\left(- \frac k2 \exp\left( -\frac{\log n}{1+\eps/2} \right)\right)+4\exp\left( -\frac{n}{25 \sqrt{\log n}}\right)                                                                        \\
		 & \le \exp \left( -\frac{n^{\frac{\eps /2}{1+\eps /2}}}{2\log n} \right) + 4\exp\left( -\frac{n}{25 \sqrt{\log n}}\right)                                                                    \\
		 & \le \exp \left( -\frac{n^{\frac{\eps /2}{1+\eps /2}}}{2\log n} \right) \left( 1+ 4 \exp\left( -\frac{  n }{ 26\sqrt{\log n}}\right)\right)                                                 \\
		 & \le \exp \left( -n^{\eps /3} \right).
	\end{align*}

	Noting that $\binom{k}{t}\binom{k}{k+1-t}\le k^{2t-1}$, we have the following upper bound for the sum of the corresponding second terms from Lemma~\ref{lem:low_probability_nonzero_l*msubmatrix_sumpartialpermutations_l}:
	\[2\sum_{t=1}^{k/4} \left(k^2 \exp \left( -n^{\eps /3}  \right) \right)^t +
		4^k  \exp \left( -n^{\eps /3}  \frac{k}{4} \right)<2.5k^2\exp \left( -n^{\eps /3}  \right).\]
	We conclude by noting that
	\begin{align*}
	2.5k^2\exp \left( -n^{\eps /3}  \right)+4^k k \exp\left( -\frac{n}{6\sqrt{\log n}} \right) &< 3k^2\exp \left( -n^{\eps /3}  \right).\qedhere
	\end{align*}
\end{proof}

\section{Concluding remarks}\label{sec:conclusion}

We have set the stage for this natural fusion between two classic notions in (extremal) graph theory: packing and colouring.
We were generous throughout with problems for further investigation, most importantly Conjecture~\ref{conj:main}, which we might audaciously refer to as the \emph{List Packing Conjecture}. Regardless of its truth in general, Conjecture~\ref{conj:main} may be specialised to various graph classes for many research possibilities. We highlight three fundamental classes where it is, in our view, most tempting to push for further progress.
\begin{itemize}
\item
\emph{Planar graphs}. We do not yet have any constructions to rule out the possibility that $\chi^\star_\ell(G) \le 5$ for all planar $G$. What is the optimal value?
\item
\emph{Line graphs}. Based on the List Colouring Conjecture, we surmise for every $\eps>0$ that $\chi^\star_\ell(G) \le (1+\eps)\omega$ for every line graph $G$ with clique number $\omega\ge \omega_0$.
Due to its connection to Latin squares, here even the case where $G$ is the line graph of the complete bipartite graph $K_{\omega,\omega}$ is enticing to narrow in on.
\item
\emph{Random graphs}. Does it hold that $\chi^\star_\ell(G_{n,1/2}) \le (1+o(1))n/(2\log_2 n)$ a.a.s.?
Related to this question, one might wonder if the $\log n$ factor in Theorem~\ref{thm:fractional} could be improved to one of order $\log (n/\chi_f(G))$, for this would immediately imply an upper bound for $\chi^\star_\ell(G_{n,1/2})$ of order $\frac{n\log\log n}{\log n}$ a.a.s.
\end{itemize}

\noindent
We also consider it natural to pursue results along the continuum between list packing and strong colouring as discussed in Subsection~\ref{sub:continued}. Here it seems to us that Conjecture~\ref{conj:CaKa} is within reach with the methods of Section~\ref{sec:bip}, but we leave this to future investigation.

\section*{Acknowledgements}
We thank the anonymous referees for their comments.

 {\small
\paragraph{Open access.} For the purpose of open access, a CC BY public copyright licence is applied to any Author Accepted Manuscript (AAM) arising from this submission.}

\bibliographystyle{abbrv}
\bibliography{listpack}

\begin{thebibliography}{10}

\bibitem{ABZ07}
R.~Aharoni, E.~Berger, and R.~Ziv.
\newblock Independent systems of representatives in weighted graphs.
\newblock {\em Combinatorica}, 27(3):253--267, 2007.

\bibitem{AEKS81}
M.~Ajtai, P.~Erd\H{o}s, J.~Koml\'{o}s, and E.~Szemer\'{e}di.
\newblock On {T}ur\'{a}n's theorem for sparse graphs.
\newblock {\em Combinatorica}, 1(4):313--317, 1981.

\bibitem{Alo88}
N.~Alon.
\newblock The linear arboricity of graphs.
\newblock {\em Israel J. Math.}, 62(3):311--325, 1988.

\bibitem{Alo92}
N.~Alon.
\newblock Choice numbers of graphs: a probabilistic approach.
\newblock {\em Combin. Probab. Comput.}, 1(2):107--114, 1992.

\bibitem{Alo93}
N.~Alon.
\newblock Restricted colorings of graphs.
\newblock In {\em Surveys in combinatorics, 1993 ({K}eele)}, volume 187 of {\em
  London Math. Soc. Lecture Note Ser.}, pages 1--33. Cambridge Univ. Press,
  Cambridge, 1993.

\bibitem{ACK21}
N.~{Alon}, S.~{Cambie}, and R.~J. {Kang}.
\newblock Asymmetric list sizes in bipartite graphs.
\newblock {\em Ann. Comb.}, 25(4):913--933, 2021.

\bibitem{AFH96}
N.~Alon, M.~R. Fellows, and D.~R. Hare.
\newblock Vertex transversals that dominate.
\newblock {\em J. Graph Theory}, 21(1):21--31, 1996.

\bibitem{AlKr98}
N.~Alon and M.~Krivelevich.
\newblock The choice number of random bipartite graphs.
\newblock {\em Ann. Comb.}, 2(4):291--297, 1998.

\bibitem{AKS99}
N.~Alon, M.~Krivelevich, and B.~Sudakov.
\newblock Coloring graphs with sparse neighborhoods.
\newblock {\em J. Combin. Theory Ser. B}, 77(1):73--82, 1999.

\bibitem{ABD23}
J.~Anderson, A.~Bernshteyn, and A.~Dhawan.
\newblock Coloring graphs with forbidden bipartite subgraphs.
\newblock {\em Combin. Probab. Comput.}, 32(1):45--67, 2023.

\bibitem{Ber16}
A.~Bernshteyn.
\newblock The asymptotic behavior of the correspondence chromatic number.
\newblock {\em Discrete Math.}, 339(11):2680--2692, 2016.

\bibitem{BoHo02}
T.~Bohman and R.~Holzman.
\newblock On a list coloring conjecture of {R}eed.
\newblock {\em J. Graph Theory}, 41(2):106--109, 2002.

\bibitem{BES75}
B.~Bollob\'{a}s, P.~Erd\H{o}s, and E.~Szemer\'{e}di.
\newblock On complete subgraphs of {$r$}-chromatic graphs.
\newblock {\em Discrete Math.}, 13(2):97--107, 1975.

\bibitem{CaKa22}
S.~{Cambie} and R.~J. {Kang}.
\newblock Independent transversals in bipartite correspondence-covers.
\newblock {\em Canad. Math. Bull.}, 65(4):882--894, 2022.

\bibitem{CJKP20}
W.~{Cames van Batenburg}, R.~{de Joannis de Verclos}, R.~J. {Kang}, and
  F.~{Pirot}.
\newblock {Bipartite induced density in triangle-free graphs}.
\newblock {\em Electron. J. Combin.}, 27(2):Paper 2.34, 19, 2020.

\bibitem{Cat80}
P.~A. Catlin.
\newblock On the {H}ajnal-{S}zemer\'{e}di theorem on disjoint cliques.
\newblock {\em Utilitas Math.}, 17:163--177, 1980.

\bibitem{DvPo18}
Z.~Dvo\v{r}\'{a}k and L.~Postle.
\newblock Correspondence coloring and its application to list-coloring planar
  graphs without cycles of lengths 4 to 8.
\newblock {\em J. Combin. Theory Ser. B}, 129:38--54, 2018.

\bibitem{ErLo75}
P.~Erd\H{o}s and L.~Lov\'{a}sz.
\newblock Problems and results on {$3$}-chromatic hypergraphs and some related
  questions.
\newblock In {\em Infinite and finite sets ({C}olloq., {K}eszthely, 1973;
  dedicated to {P}. {E}rd\H{o}s on his 60th birthday), {V}ol. {II}}, pages
  609--627. Colloq. Math. Soc. J\'{a}nos Bolyai, Vol. 10. 1975.

\bibitem{ERT80}
P.~Erd\H{o}s, A.~L. Rubin, and H.~Taylor.
\newblock Choosability in graphs.
\newblock In {\em Proceedings of the {W}est {C}oast {C}onference on
  {C}ombinatorics, {G}raph {T}heory and {C}omputing ({H}umboldt {S}tate
  {U}niv., {A}rcata, {C}alif., 1979)}, Congress. Numer., XXVI, pages 125--157.
  Utilitas Math., Winnipeg, Man., 1980.

\bibitem{ES73}
C.~J. Everett and P.~R. Stein.
\newblock The asymptotic number of {$(0,\,1)$}-matrices with zero permanent.
\newblock {\em Discrete Math.}, 6:29--34, 1973.

\bibitem{Fel90}
M.~R. Fellows.
\newblock Transversals of vertex partitions in graphs.
\newblock {\em SIAM J. Discrete Math.}, 3(2):206--215, 1990.

\bibitem{GS22}
S.~Glock and B.~Sudakov.
\newblock An average degree condition for independent transversals.
\newblock {\em J. Combin. Theory Ser. B}, 154:370--391, 2022.

\bibitem{Hal35}
P.~Hall.
\newblock On representatives of subsets.
\newblock {\em J. London Math. Soc.}, 10(1):26--30, 1935.

\bibitem{Hax01}
P.~E. Haxell.
\newblock A note on vertex list colouring.
\newblock {\em Combin. Probab. Comput.}, 10(4):345--347, 2001.

\bibitem{Hax08}
P.~E. Haxell.
\newblock An improved bound for the strong chromatic number.
\newblock {\em J. Graph Theory}, 58(2):148--158, 2008.

\bibitem{Hax11}
P.~E. Haxell.
\newblock On {{Forming Committees}}.
\newblock {\em Amer. Math. Monthly}, 118(9):777, 2011.

\bibitem{Joh96+}
A.~Johansson.
\newblock Asymptotic choice number for triangle-free graphs.
\newblock Technical Report 91-5, DIMACS, 1996.

\bibitem{KaKe22}
R.~J. Kang and T.~Kelly.
\newblock Colorings, transversals, and local sparsity.
\newblock {\em Random Structures Algorithms}, 61(1):173--192, 2022.

\bibitem{KPV05}
D.~Kr\'{a}l, O.~Pangr\'{a}c, and H.-J. Voss.
\newblock A note on group colorings.
\newblock {\em J. Graph Theory}, 50(2):123--129, 2005.

\bibitem{LoSu07}
P.-S. Loh and B.~Sudakov.
\newblock Independent transversals in locally sparse graphs.
\newblock {\em J. Combin. Theory Ser. B}, 97(6):904--918, 2007.

\bibitem{Mac21}
K.~MacKeigan.
\newblock Independent coverings and orthogonal colourings.
\newblock {\em Discrete Math.}, 344(8):112431, Aug. 2021.

\bibitem{ASurveyOfMatrixTheoryandMatrixInequalitiesAllyandBacon1964}
M.~Marcus and H.~Minc.
\newblock {\em A survey of matrix theory and matrix inequalities}.
\newblock Allyn and Bacon, Boston, Mass., 1964.

\bibitem{Mol19}
M.~Molloy.
\newblock The list chromatic number of graphs with small clique number.
\newblock {\em J. Combin. Theory Ser. B}, 134:264--284, 2019.

\bibitem{NoPo23}
S.~Norin and L.~Postle.
\newblock Connectivity and choosability of graphs with no {$K_ t$} minor.
\newblock {\em J. Combin. Theory Ser. B}, 158(part 1):283--300, 2023.

\bibitem{PS97}
A.~Panconesi and A.~Srinivasan.
\newblock Randomized {{Distributed Edge Coloring}} via an {{Extension}} of the
  {{Chernoff}}--{{Hoeffding Bounds}}.
\newblock {\em SIAM Journal on Computing}, 26(2):350--368, Mar. 1997.

\bibitem{Ree99}
B.~Reed.
\newblock The list colouring constants.
\newblock {\em J. Graph Theory}, 31(2):149--153, 1999.

\bibitem{ReSu02}
B.~Reed and B.~Sudakov.
\newblock Asymptotically the list colouring constants are 1.
\newblock {\em J. Combin. Theory Ser. B}, 86(1):27--37, 2002.

\bibitem{She83}
J.~B. Shearer.
\newblock A note on the independence number of triangle-free graphs.
\newblock {\em Discrete Math.}, 46(1):83--87, 1983.

\bibitem{She85}
J.~B. Shearer.
\newblock On a problem of {S}pencer.
\newblock {\em Combinatorica}, 5(3):241--245, 1985.

\bibitem{SzTa06}
T.~Szab\'{o} and G.~Tardos.
\newblock Extremal problems for transversals in graphs with bounded degree.
\newblock {\em Combinatorica}, 26(3):333--351, 2006.

\bibitem{Viz76}
V.~G. Vizing.
\newblock Coloring the vertices of a graph in prescribed colors.
\newblock {\em Diskret. Analiz}, (29, Metody Diskret. Anal. v Teorii Kodov i
  Shem):3--10, 101, 1976.

\bibitem{Yus21}
R.~Yuster.
\newblock On factors of independent transversals in {$k$}-partite graphs.
\newblock {\em Electron. J. Combin.}, 28(4):Paper No. 4.23, 18, 2021.

\end{thebibliography}

\end{document}